\newtheorem{lemma}{Lemma}[section]
\newtheorem{proposition}[lemma]{Proposition}
\newtheorem{theorem}[lemma]{Theorem}
\newtheorem{corollary}[lemma]{Corollary}
\theoremstyle{definition}
\newtheorem{remark}[lemma]{Remark}
\newcommand{\commentout}[1]{}
\newcommand {\mr}{\mathrm}
\newcommand {\lk}{\left\{}
\newcommand {\rk}{\right\}}
\newcommand {\xms}{\vspace{0.1cm}}
\newcommand {\eg}{\underline EG}
\newcommand {\cx}{\overline{\bf X}}
\newcommand {\ov}{\overline}
\newcommand{\cal}{\mathcal}
\begin{document}

\thispagestyle{empty}

\centerline{\Large\bf Dismantlability of weakly systolic complexes and applications}

\vspace{6mm}

\centerline{{\sc Victor Chepoi$^{\small 1}$}  and {\sc Damian Osajda}$^{\small 2}$}

\vspace{3mm}
\medskip
\centerline{$^{1}$Laboratoire d'Informatique Fondamentale,}
\centerline{Aix-Marseille Universit\'e,}
\centerline{Facult\'e des Sciences de Luminy,} \centerline{F-13288
Marseille Cedex 9, France} \centerline{chepoi@lif.univ-mrs.fr}

\medskip
\centerline{$^2$ Instytut Matematyczny, Uniwersytet Wroc{\l}awski}
\centerline{pl.\ Grunwaldzki 2/4, 50-384 Wroc{\l}aw, Poland}
\centerline{and}
\centerline{Universit\"at Wien, Fakult\"at f\"ur Mathematik}
\centerline{Nordbergstra\ss e 15, 1090 Wien, Austria}
\centerline{dosaj@math.uni.wroc.pl}

\bigskip

\begin{footnotesize} \noindent {\bf Abstract.}
The main goal of this paper is proving the fixed point theorem for finite groups acting on weakly systolic complexes. As corollaries we obtain results concerning classifying spaces for the family of finite subgroups of weakly systolic groups and conjugacy classes of finite subgroups. As immediate consequences we get new results on systolic complexes and groups. \\
The fixed point theorem is proved by using a graph-theoretical tool---dismantlability. In particular we show that $1$--skeleta of weakly systolic complexes, i.e.\ weakly bridged graphs, are dismantlable. On the way we show numerous characterizations of weakly bridged graphs and weakly systolic complexes.
\end{footnotesize}

\bigskip
{\it MSC:} 20F65;  20F67; 05C12; 05C63.

\bigskip

{\it Keywords:} weakly systolic complex, weakly bridged graph, weakly systolic group, dismantlability, fixed point theorem.

\section{Introduction}
\label{intro}

In his seminal paper \cite{G}, among many other results, Gromov gave a pretty combinatorial characterization of CAT(0) cubical
complexes as simply connected cubical complexes in which the links of vertices are simplicial flag complexes. Based on this result,
\cite{Ch_CAT,Rol} established a bijection between the $1$--skeletons of CAT(0) cubical complexes and the median graphs, well-known
in metric graph theory \cite{BaCh_survey}. A similar combinatorial characterization of CAT(0) simplicial complexes having regular Euclidean
simplices as cells seems to be out of reach. Nevertheless, Chepoi \cite{Ch_CAT} characterized the bridged complexes (i.e., the simplicial complexes having
bridged graphs as $1$--skeletons) as the simply connected simplicial complexes in which the links of vertices are flag complexes without embedded $4$-- and
$5$--cycles; the bridged graphs are exactly the graphs which satisfy one of the basic features of CAT(0) spaces: the balls around
convex sets are convex. Bridged graphs have been introduced in \cite{FaJa,SoCh} as graphs without embedded isometric cycles of
length greater than $3$ and have been further investigated in several graph-theoretical and algebraic papers; cf.\
\cite{AnFa,BaCh_weak,Ch_bridged,Po,Po1} and the survey \cite{BaCh_survey}. Januszkiewicz-\'Swi\c atkowski \cite{JanSwi} and Haglund \cite{Ha}
rediscovered this class of simplicial complexes (they call them {\it systolic complexes}) and used them (and groups acting on them geometrically---{\it systolic groups}) fruitfully in the context of geometric group theory. Systolic complexes and groups turned out to be good combinatorial
analogs of CAT(0) (nonpositively curved)  metric spaces and groups; cf.\
\cite{Ha,JanSwi,O-ciscg,OsPr,Pr2,Pr3}.

One of the characteristic features of systolic complexes, related to the convexity of balls around convex sets, is the following  $SD_n(\sigma^*)$
property introduced in \cite{Osajda}: {\it  if a simplex $\sigma$ of a simplicial complex $\bf X$ is located in the sphere of radius $n+1$ centered
at some simplex $\sigma^*$ of $\bf X$, then the set of all vertices $x$ such that $\sigma\cup\{ x\}$ is a simplex and $x$ has distance $n$ to $\sigma^*$ is a
nonempty simplex $\sigma_0$ of $\bf X$.} Relaxing this condition, Osajda \cite{Osajda} called a simplicial complex $\bf X$ {\it weakly systolic} if the property
$SD_n(\sigma^*)$ holds whenever $\sigma^*$ is a vertex (i.e., a $0$--dimensional simplex) of $\bf X$. He further showed that this $SD_n$ property is
equivalent with the $SD_n(\sigma^*)$ property in which $\sigma^*$ is a vertex and $\sigma$ is a vertex or an edge (i.e., an $1$--dimensional simplex)
of $\bf X$.
Finally it is showed in \cite{Osajda} that weakly systolic
complexes can be characterized as simply connected simplicial complexes satisfying some local combinatorial conditions, cf.\ also Theorem A below. This is analogous to the cases of $CAT(0)$ cubical complexes and systolic complexes.
In graph-theoretical terms, the $1$--skeletons of weakly systolic complexes (which we call {\it weakly bridged graphs})
satisfy the so-called triangle and quadrangle conditions \cite{BaCh_weak}, i.e., like median and bridged graphs, the weakly bridged graphs are weakly modular graphs. From the results of \cite{Osajda} and of the present paper it follows that the properties of weakly systolic complexes resemble very much the properties of spaces of non-positive curvature.

The initial motivation of \cite{Osajda} for introducing weakly systolic complexes  was to exibit a class of  simplicial complexes with some kind of
simplicial nonpositive curvature that will include the systolic complexes  and some other classes of complexes appearing in the context of geometric group theory. As we noticed already, systolic complexes are weakly systolic. Moreover, for every simply connected locally $5$--large cubical complex (i.e.\ $CAT(-1)$ cubical complex \cite{G}) there exists a canonically associated simplicial complex, which is weakly systolic \cite{Osajda}. In particular, the class of {\it weakly systolic groups}, i.e., groups acting geometrically by automorphisms on weakly systolic complexes, contains the class of $CAT(-1)$ cubical groups and is therefore essentially bigger than the class of systolic groups; cf.\ \cite{O-ciscg}. Other classes of weakly systolic groups are presented in \cite{Osajda}. The ideas and results from \cite{Osajda} permit the construction in \cite{O2} of new examples of Gromov hyperbolic groups of
arbitrarily large (virtual) cohomological dimension.   Furthermore, Osajda \cite{Osajda} and Osajda-\' Swi\c atkowski \cite{OS} provide new examples of high dimensional groups with interesting asphericity properties. On the other hand, as we will show below, the class of weakly systolic complexes seems also to appear naturally in the context of graph theory and has not been studied before from this point of view.

In this paper, we present further characterizations and properties of
weakly systolic complexes and their $1$--skeletons, weakly bridged graphs.
Relying on techniques from graph theory we establish dismantlability of
locally-finite weakly bridged graphs. This result is used to show  some
interesting nonpositive-curvature-like properties of weakly systolic complexes
and groups (see \cite{Osajda} for other properties of this kind). As
corollaries, we also get new results about systolic complexes and
groups. We conclude this introductory section with the formulation of our main
results (see respective sections for all missing definitions and notations as well as
for other related results).

We start with a characterization of weakly systolic complexes
proved in Section \ref{char}:

\medskip
\noindent
{\bf Theorem A.}  {\it For a flag simplicial complex $\bold X$ the following conditions are equivalent:
\begin{itemize}
\item[(a)] ${\bold X}$ is weakly systolic;
\item[(b)] the $1$--skeleton of ${\bold X}$ is a weakly modular graph without induced $C_4$;
\item[(c)] the $1$--skeleton of ${\bold X}$  is a weakly modular graph with convex balls;
\item[(d)] the $1$--skeleton of ${\bold X}$ is a graph with convex balls in which any $C_5$ is included in a $5$--wheel $W_5$;
\item[(e)] $\bold X$ is simply connected,  satisfies the $\widehat{W}_5$--condition, and does not contain induced $C_4.$
\end{itemize}
}
\medskip

In Section \ref{dismantlability} we prove the following result:

\medskip\noindent
{\bf Theorem B.}  {\it Any LexBFS ordering of vertices of a locally finite weakly systolic complex $\bf X$ is a dismantling ordering of
its $1$--skeleton.}
\medskip

This  result allows us to prove in Section \ref{fixedpt} the following fixed point theorem concerning group actions:

\medskip\noindent
{\bf Theorem C.}  {\it
Let $G$ be a finite group acting by simplicial automorphisms
on a locally finite weakly systolic complex ${\bold X}$. Then there exists
a simplex $\sigma \in {\bold X}$ which is invariant under the action
of $G$.}
\medskip

The barycenter of an invariant simplex is a point fixed by $G$.
An analogous theorem holds in the case of $CAT(0)$ spaces; cf.\
\cite[Corollary 2.8]{BrHa}.
As a direct corollary of Theorem C,  we get the fixed point theorem for systolic complexes.
This was conjectured by Januszkiewicz-\' Swi\c atkowski (personal communication) and Wise \cite{Wi}, and later was formulated
in the collection of open questions \cite[Conjecture 40.1 on page 115]{Chat}.
A partial result in the systolic case was proved by Przytycki \cite{Pr2}.
In fact, in Section \ref{final}, based on a result of Polat \cite{Po} for bridged graphs,
we prove an even stronger version of the fixed point theorem in this case.

The use of dismantlability of the underlying graph to prove the fixed point theorem for finite group actions is, due to our knowledge, a novelty brought by the current paper. It should be noticed, that there are well known examples of contractible, or even collapsible simplicial complexes admitting finite group actions without fixed points. Thus it seems that dismantlability is a right strengthening of those properties in the context of fixed point results. Subsequently, many other complexes studied in connection with group actions have dismantling properties. There, this approach gives new results concerning sets of fixed points; cf.\ e.g.\ \cite{PS}.

There are several important group theoretical consequences of Theorem C. The first one follows directly
from this theorem and  \cite[Remarks 7.7$\&$7.8]{Pr2}.

\medskip\noindent
{\bf Theorem D.}  {\it Let $k\geq 6$. Free products of $k$--systolic groups amalgamated over finite subgroups
are $k$--systolic. HNN extensions of $k$--systolic groups over finite subgroups are $k$--systolic.}
\medskip

The following result (Corollary \ref{conj} below) also has its $CAT(0)$ counterpart; cf.\ \cite[Corollary 2.8]{BrHa}:

\medskip\noindent
{\bf Corollary.}   {\it Let $G$ be a weakly systolic group.
Then $G$ contains only finitely many conjugacy classes of finite subgroups.}
\medskip

The next important consequence of the fixed point theorem concerns classifying spaces for proper group actions.
Recall that if a group $G$ acts properly on a space $\bf X$ such that the fixed point
set for any finite subgroup of $G$ is contractible (and therefore non-empty), then we say that $\bf X$ is a \emph {model for $\eg$}---the classifying space
for finite groups. If additionally the action is cocompact, then $\bf X$ is
a \emph {finite model for $\eg$}. A (finite) model for $\eg$ is in a sense a
``universal'' $G$--space (see \cite{Lu} for details).
The following theorem is a direct consequence of Theorem C and
Proposition \ref{inv set contr} below.

\medskip\noindent
{\bf Theorem E.}  {\it
Let $G$ act properly by simplicial automorphisms on a finite dimensional weakly systolic complex
$\bf X$. Then $\bf X$ is a finite dimensional model for $\eg$. If, moreover, the action of $G$ on $\bf X$ is
cocompact, then $\bf X$ is a finite model for $\eg$.}
\medskip

As an immediate consequence we get an analogous result about $\eg$ for
systolic groups. This was conjectured in \cite[Chapter 40]{Chat}.
Przytycki \cite{Pr3} showed that the Rips complex (with the constant
at least $5$) of a systolic complex is an $\eg$ space. Our result gives a systolic---and thus much nicer---model
of $\eg$ in that case. In particular the new model allows us to construct other classifying spaces; cf. \cite{O3}.
\medskip

In the final Section \ref{final} we present some further results about systolic complexes and groups.
Besides a stronger version of Theorem C, we remark on another approach
to this theorem initiated by Zawi\' slak \cite{Z} and Przytycki \cite{Pr2}.
In particular, our Proposition \ref{round} proves their conjecture about round complexes; cf.\ \cite[Conjecture 3.3.1]{Z} and \cite[Remark 8.1]{Pr2}.
Finally, we show (cf.\ the end of Section \ref{final}) how our results about $\eg$ apply to the questions of existence
of  particular boundaries of systolic groups (and thus to the Novikov conjecture for systolic groups with torsion).
This relies on earlier results of Osajda-Przytycki \cite{OsPr}.

\section{Preliminaries}

\subsection{Graphs and simplicial complexes} We continue with basic definitions used in
this paper concerning graphs and simplicial complexes (see \cite{Diestel} for graph theoretical notions used in this paper).
All graphs $G=(V,E)$ occurring here
are undirected, connected, and
without loops or multiple edges.
A graph $G$ is {\it complete} if any two of its vertices are connected by an edge.
A graph $H=(V',E')$ is an {\it induced subgraph} of the graph $G$ if $V'\subseteq V$, and $uv\in E'$ iff $uv\in E$.
The {\it distance} $d(u,v)$ between
two vertices $u$ and $v$ is the length of a shortest $(u,v)$--path,
and the {\it interval} $I(u,v)$ between $u$ and $v$ consists of all
vertices on shortest $(u,v)$--paths, that is, of all vertices
(metrically) {\it between} $u$ and $v$:
$$I(u,v)=\{ x\in V: d(u,x)+d(x,v)=d(u,v)\}.$$
An induced subgraph of $G$ (or the corresponding vertex set $A$) is
called {\it convex} if it includes the interval of $G$ between any
of its vertices.  By the {\it convex hull} conv$(W)$ of $W\subseteq V$ in $G$
we mean the smallest convex subset of $V$ (or induced subgraph of
$G$) that contains $W.$ An {\it isometric subgraph} of $G$ is an
induced subgraph  in which the distances between any two vertices
are the same as in $G.$  In particular, convex subgraphs are
isometric. The {\it(open) neighborhood} $N(x)$ of a vertex $x$ consists of all vertices $y$ adjacent to $x$ in $G$. The {\it ball} (or disk) $B_r(x)$ of center
$x$ and radius $r\ge 0$ consists of all vertices of $G$ at distance
at most $r$ from $x.$ In particular,  the unit ball $B_1(x)$
comprises $x$ and the neighborhood $N(x)$ of $x.$ The {\it sphere} $S_r(x)$ of center
$x$ and radius $r\ge 0$ consists of all vertices of $G$ at distance
exactly $r$ from $x.$ The ball $B_r(S)$
centered at a convex set $S$ is the union of all balls $B_r(x)$ with
centers $x$ from $S.$ The {\it sphere} $S_r(S)$ of center
$S$ and radius $r\ge 0$ consists of all vertices of $G$ at distance
exactly $r$ from $S.$

A graph $G$ is called  {\it thin} if for any two nonadjacent vertices $u,v$ of $G$
any two neighbors of $v$ in the interval $I(u,v)$ are adjacent. A graph $G$ is {\it weakly modular}
\cite{BaCh_weak,BaCh_survey} if its distance function $d$ satisfies the
following conditions:

\medskip\noindent
{\it Triangle condition} (T):  for any three vertices $u,v,w$ with
$1=d(v,w)<d(u,v)=d(u,w)$ there exists a common neighbor $x$ of $v$
and $w$ such that $d(u,x)=d(u,v)-1.$

\medskip\noindent
{\it Quadrangle condition} (Q): for any four vertices $u,v,w,z$ with
$d(v,z)=d(w,z)=1$ and  $2=d(v,w)\le d(u,v)=d(u,w)=d(u,z)-1,$ there
exists a common neighbor $x$ of $v$ and $w$ such that
$d(u,x)=d(u,v)-1.$

An abstract {\it simplicial complex} ${\bold X}$ is a collection of
sets (called {\it simplices}) such that $\sigma\in {\bold X}$ and
$\sigma'\subseteq \sigma$ implies $\sigma'\in {\bold X}.$
The {\it geometric realization} $\vert {\bold X}\vert$ of a
simplicial complex is the  polyhedral
complex obtained by replacing every face $\sigma$  of $\bf X$ by a ``solid"
regular simplex  $|\sigma|$  of the same
dimension such that realization commutes with intersection, that is,
$|\sigma'|\cap |\sigma''|=|\sigma'\cap \sigma''|$ for any two simplices
$\sigma'$ and $\sigma''.$  Then $\vert {\bold X}\vert=\bigcup\{
|\sigma|:\sigma\in {\bold X}\}.$  $\bold X$ is called
{\it simply connected} if it is connected and if every continuous
mapping of the $1$--dimensional sphere $S^1$ into $|{\bold X}|$ can
be extended to a continuous mapping of the disk $D^2$ with boundary
$S^1$ into $|{\bold X}|$.

For a simplicial complex $\bold X$, denote by
$V({\bold X})$ and $E({\bold X})$ the {\it vertex set} and the
{\it edge set} of ${\bold X},$ namely, the set of all
$0$--dimensional and $1$--dimensional simplices of ${\bold X}.$ The pair
$(V({\bold X}),E({\bold X}))$ is called the {\it (underlying)
graph} or the {\it $1$--skeleton} of ${\bold X}$ and is denoted by
$G({\bold X})$. Conversely, for a graph $G$ one can derive a
simplicial complex ${\bold X}(G)$ (the {\it clique complex} of $G$)
by taking all complete
subgraphs (cliques)   as simplices of the complex.
A simplicial complex $\bold X$ is a {\it flag complex} (or a {\it
clique complex}) if any set of vertices is included in a face of
$\bold X$ whenever each pair of its vertices is contained in a
face of ${\bold X}$ (in the theory of hypergraphs this condition
is called conformality). A flag complex can therefore be recovered
by its underlying graph $G({\bold X})$: the complete subgraphs of
$G({\bold X})$ are exactly the simplices  of ${\bold X}.$
All simplicial complexes occurring in this paper are flag complexes.
The {\it link} of a simplex $\sigma$ in ${\bold X},$ denoted lk$(\sigma,{\bold X})$ is the
simplicial complex consisting of all simplices $\sigma'$ such that
$\sigma \cap \sigma'=\emptyset$ and $\sigma\cup\sigma'\in {\bold X}.$ For a
simplicial complex $\bold X$ and  a vertex $v$ not belonging to ${\bold X},$
the {\it cone} with apex $v$ and
base $\bold X$ is the simplicial
complex $v\ast {\bold X}={\bold X}\cup \{ \sigma\cup \{ v\}: \sigma\in {\bold X}\}.$

For a simplicial complex $\bold X$ and any $k\ge 1,$ the {\it Rips complex} ${\bold X}_k$
is a simplicial
complex with the same set of vertices as $\bold X$ and with a simplex spanned by any subset
$S\subseteq V({\bold X})$ such that $d(u,v)\le k$ in $G({\bold X})$ for each pair of vertices
$u,v\in S$ (i.e., $S$ has
diameter $\le k$ in the graph $G({\bold X})$); cf.\ e.g.\ \cite{G}. From the definition immediately follows that the
Rips complex of any complex is a flag complex.
Alternatively, the Rips complex ${\bold X}_k$ can be viewed as the clique complex
${\bold X}(G^k(\bold X))$ of the $k$th
power of the graph of $\bold X$ (the {\it $k$th power} $G^k$ of a graph $G$ has the same set of
vertices as $G$ and two vertices $u,v$ are adjacent in $G^k$ if and only if
$d(u,v)\le k$ in $G$).

\subsection{$SD_n$ property and weakly systolic complexes}

The following generalization of systolic complexes has been presented by Osajda \cite{Osajda}.
Let ${\bold X}$ be a flag simplicial complex and $\sigma^*$ be a simplex of ${\bold X}$. Then ${\bold X}$ satisfies the {\it $SD_n(\sigma^*)$ property} if for each $i\leq n$ and each simplex $\sigma$ located in the sphere $S_{i+1}(\sigma^*)$ the set $\sigma_0:=V(\rm{lk}(\sigma,{\bf X}))$$\cap B_i(\sigma^*)$
spans a non-empty simplex of $\bf X$ ($SD$ stands for {\it simple descent} on balls).  Systolic complexes are exactly the flag complexes which satisfy the $SD_n(\sigma^*)$ property for all
simplices $\sigma^*$ and all natural numbers $n$. On the other hand, the $5$--wheel $W_5$ (see the definition at the beginning of Section \ref{char}) is an example of a ($2$--dimensional) simplicial complex which
satisfies the $SD_1(\sigma^*)$ property for $\sigma^*$ being any vertex or triangle but not for $\sigma^*$ being a boundary edge. In view of this analogy and of subsequent results,
we define a {\it weakly systolic} complex to be a flag simplicial complex $\bold X$ which satisfies the $SD_n(v)$ property for all vertices
$v\in V({\bold X})$ and for all natural numbers $n$. We will also define a {\it weakly bridged} graph to be the underlying graph
of a weakly systolic complex.  It can be  shown (cf.\ Theorem \ref{weakly-systolic}) that $\bold X$ is a weakly systolic complex if  for each vertex $v$ and every $i$ it satisfies the
following two conditions:

\medskip\noindent
{\it Vertex condition} (V):  for every vertex
$w \in S_{i+1}(v),$ the intersection
$V(\rm{lk}$$(w,{\bf X}))\cap B_i(v)$ is a single simplex;

\medskip\noindent
{\it Edge condition} (E): for every
edge $e \in S_{i+1}(v),$ the intersection
$V(\rm{lk}$$(e,{\bf X}))\cap B_i(v)$ is nonempty.

In fact, this is the original definition of a weakly systolic complex given in \cite{Osajda}. Notice that these two conditions imply that weakly systolic complexes are exactly
the flag complexes whose underlying graphs are thin and satisfy the triangle condition.

\subsection{Dismantlability of graphs and LC-contractibility of complexes}
\label{dislc}

Let $G=(V,E)$ be a graph and $u,v$ two vertices of $G$ such that any neighbor of $v$ (including $v$ itself) is also a
neighbor of $u$, i.e.\ $B_1(v)\subseteq B_1(u)$.  Then there is a retraction of $G$ to $G-v$ taking $v$ to $u$. Following \cite{HeNe}, we call this retraction a {\it fold}
and we say that $v$ is {\it dominated} by $u.$ A finite graph $G$ is {\it dismantlable} if it can be reduced, by a sequence of folds, to a single vertex.
In other words, an $n$--vertex  graph $G=(V,E)$ is dismantlable if its vertices can be ordered $v_1,\ldots,v_n$ so that for each vertex $v_i, 1\le i<n,$
there exists another vertex $v_j$ with $j>i,$ such that $B_1(v_i)\cap V_i\subseteq B_1(v_j)\cap V_i,$ where $V_i:=\{ v_i,v_{i+1},\ldots,v_n\}.$ This order is called a
{\it dismantling order}.
We now consider the analogue of dismantlability for a simplicial complex $\bold X$ investigated in the papers  \cite{CiYa,Ma}.
A vertex $v$ of $\bold X$ is {\it LC-removable} if lk$(v,{\bold X})$ is a cone. If $v$ is an LC-removable vertex of $\bold X$,
then ${\bold X}-v:= \{ \sigma\in {\bold X}: v\notin \sigma\}$ is obtained from $\bold X$ by an {\it elementary LC-reduction} (link-cone reduction) \cite{Ma}. Then $\bold X$ is called
 {\it LC-contractible} \cite{CiYa}  if there is a sequence of elementary LC-reductions transforming $\bold X$ to one vertex.  For flag simplicial complexes,  the
 LC-contractibility of $\bold X$  is equivalent to dismantlability of its graph $G({\bold X})$ because an LC-removable  vertex $v$ is
 dominated by the apex of the cone  lk$(v,{\bold X})$ and vice versa the link of any dominated vertex $v$ is a cone having the vertex dominating $v$ as
 its apex. LC-contractible simplicial complexes are collapsible (see \cite[Corollary 6.5]{CiYa}).

The simplest algorithmic way to order the vertices of a locally finite graph is to apply the {\it Breadth-First Search} (BFS) starting from the root vertex
(base point) $b.$ We number with 1 the vertex $u$ and put it on the initially empty queue. We repeatedly remove the vertex $v$ at the head of the queue and consequently number (in an arbitrary order) and place onto the queue all still unnumbered neighbors of $v$. BFS constructs a spanning tree $T_u$ of $G$ with the vertex $u$ as a root. Then a vertex $v$ is the {\it father} in $T_v$ of any of its neighbors $w$ in $G$ included in the queue when $v$ is removed (notation $f(w)=v$).
Notice that the distance from any vertex $v$ to the root $u$ is the same in $G$ and in $T_u.$ Another method to order the vertices of a graph is the {\it Lexicographic Breadth-First Search} (LexBFS) proposed by Rose-Tarjan-Lueker \cite{RoTaLu}. According to LexBFS, the vertices of a graph $G$ are numbered in decreasing order. The {\it label} $L(w)$ of an unnumbered vertex $w$ is the list of its numbered neighbors. As the next vertex to be numbered, select the vertex with the lexicographic largest label, breaking ties arbitrarily. As in case of BFS, we remove the vertex $v$ at the head of the queue and consequently number according to the lexicographic order and place onto the queue all still unnumbered neighbors of $v$. LexBFS is a particular instance of BFS, i.e., every ordering produced by LexBFS can also be generated by BFS.

Anstee-Farber \cite{AnFa} established that bridged graphs are dismantlable. Chepoi \cite{Ch_bridged} noticed that any order of
a bridged graph returned by BFS is a dismantling order. Namely, he showed a stronger result: {\it for any two adjacent vertices $v_i,v_j$ with $i<j,$ their fathers $f(v_i),f(v_j)$ either coincide or
are adjacent and  moreover $f(v_j)$ is adjacent to $v_i$.}
Polat \cite{Po,Po1} defined dismantlability and BFS for arbitrary (not necessarily locally finite) graphs and extended the results of \cite{AnFa,Ch_bridged} to all bridged graphs.

\subsection{Group actions on simplicial complexes}

Let $G$ be a group acting by automorphisms on a simplicial complex $\bold X$.
By $\mr {Fix}_G{\bold X}$ we denote the \emph{fixed point set} of the action of $G$ on $\bf X$, i.e.\ $\mr {Fix}_G{\bf X}=\lk x\in {\bf X}|\; Gx=\lk x\rk \rk$.
Recall that the action is \emph{cocompact} if the orbit space $G\backslash {\bf X}$ is compact. The action of $G$ on a locally finite simplicial complex $\bf X$ is \emph{properly discontinuous} if stabilizers of simplices are finite. Finally, the action is \emph{geometric} (or $G$ \emph{acts geometrically} on $\bf X$) if it is cocompact and properly discontinuous.

\section{Characterizations of weakly systolic complexes}
\label{char}

We continue with the characterizations of weakly systolic complexes and their underlying graphs; some of those characterizations have been presented also in \cite{Osajda}. We denote by
$C_k$ a $k$--cycle and by $W_k$ a $k$--wheel, i.e., a $k$--cycle $x_1,\ldots,x_k$ plus a central vertex $c$ adjacent to all vertices of $C_k.$  $W_k$ can also be viewed as a $2$--dimensional
simplicial complex  consisting of $k$ triangles $\sigma_1,\ldots,\sigma_k$ sharing a common vertex $c$ and such that $\sigma_i$ and $\sigma_j$ intersect in an edge $x_ic$ exactly
when $|j-i|=1\; (\mr{mod}\; k).$ In other words,  lk$(c,{\bold W_k})=C_k$, i.e.\ $W_k$ is a cone over $C_k$. By $\widehat{W}_k$ we denote a $k$--wheel $W_k$ plus a triangle $ax_ix_{i+1}$ for some $i<k$ (we suppose that $a\ne c$ and that $a$ is not adjacent to any other vertex of $W_k$). We continue with a condition which basically characterizes
weakly systolic complexes among simply connected flag simplicial complexes:

\medskip\noindent
{\sf $\widehat{W}_5$--condition}: {\it for any $\widehat{W}_5,$ there exists a vertex $v\notin \widehat{W}_5$ such that $\widehat{W}_5$ is included in  $\rm{lk}(v,{\bold X}),$  i.e., $v$ is adjacent
in $G({\bold X})$ to all vertices of  $\widehat{W}_5$} (see Fig.~\ref{5-wheel}).

\begin{figure}[t]
\begin{center}
\scalebox{0.60}{\includegraphics{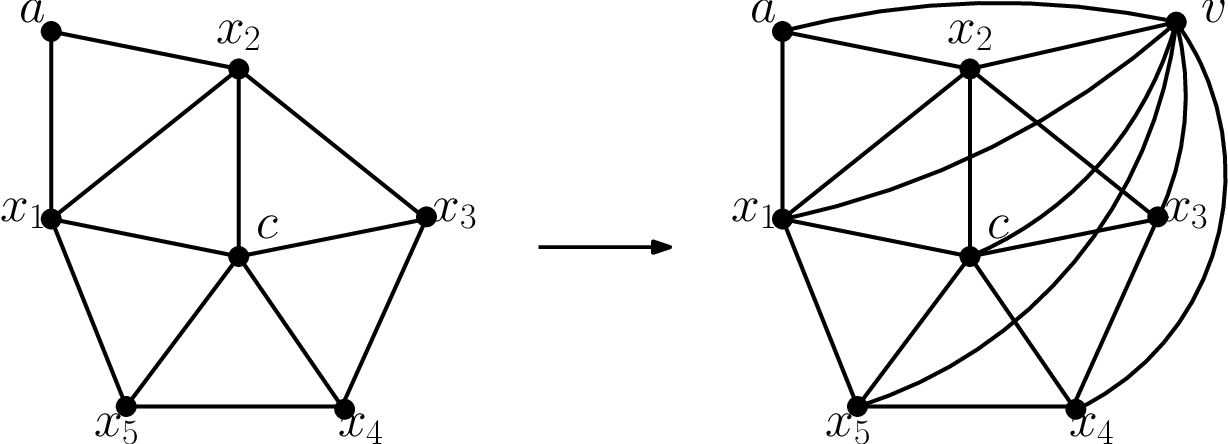}}
\end{center}
\caption{The $\widehat{W}_5$--condition}
\label{5-wheel}
\end{figure}

\begin{theorem}[Characterizations]
\label{weakly-systolic}
For a flag simplicial complex $\bold X$ the following conditions are equivalent:
\begin{itemize}
\item[(i)] ${\bold X}$ is weakly systolic;
\item[(ii)]  $\bold X$ satisfies the vertex condition (V) and the edge condition (E);
\item[(iii)] $G({\bold X})$ is a weakly modular thin graph;
\item[(iv)] $G({\bold X})$ is a weakly modular graph without induced $C_4$;
\item[(v)] $G({\bold X})$  is a weakly modular graph with convex balls;
\item[(vi)] $G({\bold X})$ is a graph with convex balls in which any $C_5$ is included in a $5$--wheel $W_5$;
\item[(vii)] $\bold X$ is simply connected,  satisfies the $\widehat{W}_5$--condition, and does not contain induced $C_4.$
\end{itemize}
\end{theorem}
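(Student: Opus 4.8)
The plan is to prove the chain of equivalences by establishing a convenient cycle of implications together with a few shortcuts, rather than proving each of the $\binom{7}{2}$ pairs. A natural route is
\[
(i)\Leftrightarrow(ii)\Rightarrow(iii)\Rightarrow(iv)\Rightarrow(v)\Rightarrow(vi)\Rightarrow(vii)\Rightarrow(i),
\]
with the equivalence $(i)\Leftrightarrow(ii)$ being essentially the content of Osajda's original definition (the paper already asserts in the preliminaries that the $SD_n$ property for vertices is equivalent to the conjunction of (V) and (E), so for this step I would either cite \cite{Osajda} or reproduce the short inductive argument showing that (V) handles the ``single simplex'' conclusion for vertices on $S_{i+1}(v)$ and (E) upgrades it to edges, then flag-ness propagates it to all simplices $\sigma$).

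The next block, $(ii)\Rightarrow(iii)\Rightarrow(iv)$, I would treat as the combinatorial core. The paper has already observed that (V) and (E) together say precisely that $G(\bold X)$ is thin and satisfies the triangle condition (T); so $(ii)\Rightarrow(iii)$ amounts to checking that a thin graph satisfying (T) also satisfies the quadrangle condition (Q). This is where I expect the main obstacle to lie: given $u,v,w,z$ as in (Q), with $d(v,z)=d(w,z)=1$, $d(v,w)=2$, and $d(u,v)=d(u,w)=d(u,z)-1=:k$, I want a common neighbor $x$ of $v,w$ with $d(u,x)=k-1$. The idea is to first apply (T) to the triple $u,v,w$ (legitimate since $d(v,w)=1$? — no, $d(v,w)=2$, so instead) — one finds a common neighbor $t$ of $v$ and $w$ obtained via the triangle condition applied along a geodesic, or one uses that in a thin graph the neighbors of $v$ in $I(u,v)$ form a clique; then one must show $t$ can be chosen adjacent to $z$ as well, or argue directly via thinness applied to the pair $v,z$ (which are adjacent) relative to $u$. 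I anticipate needing a careful case analysis on whether $z\in I(u,v)$-type positions hold, and possibly an auxiliary lemma that in a thin graph with (T), for nonadjacent $a,b$ the set $N(b)\cap I(a,b)$ is a simplex whose vertices are all at distance $d(a,b)-1$ from $a$ — which is really just (V) restated. The implication $(iii)\Rightarrow(iv)$ is then a one-liner: an induced $C_4$ would have two nonadjacent vertices whose two common neighbors are nonadjacent, contradicting thinness applied to either diagonal.

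For $(iv)\Rightarrow(v)$ I would argue that weak modularity plus absence of induced $C_4$ forces balls to be convex: take $x,y$ in $B_r(c)$ with $m\in I(x,y)$; induct on $d(x,y)$, using (T) or (Q) to pull a neighbor of $m$ one step closer to $x$ and one closer to $y$, keeping everything inside $B_r(c)$, with the $C_4$-freeness ensuring the pulled-back vertices coincide or are adjacent so the induction closes (this is the standard ``balls are convex in weakly modular $C_4$-free graphs'' argument; a clean reference-style proof via the interval $I(c,m)$ and condition (Q) should work). Then $(v)\Rightarrow(vi)$ is immediate once I show any induced $C_5$ in a weakly modular graph sits in a $W_5$: apply (T) to a vertex $x_i$ of the $C_5$ and its two neighbors at distance... — more precisely, pick opposite-ish vertices and use (Q)/(T) to produce the hub $c$ adjacent to all five, and $C_4$-freeness (implied by convex balls) rules out the hub being adjacent to only four. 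The step $(vi)\Rightarrow(vii)$ requires producing the $\widehat W_5$-apex and simple connectivity: convex balls imply the graph is simply connected in the sense that its clique complex is (balls convex $\Rightarrow$ every cycle can be filled, a known consequence; alternatively cite that graphs with convex balls are bridged-like and their clique complexes are simply connected), $C_4$-freeness is given, and the $\widehat W_5$-condition follows by taking the $C_5$ in a given $\widehat W_5$, using (vi) to find its hub $c'$, and then checking $c'$ must also dominate the extra vertex $a$ — again via $C_4$-freeness and convexity of $B_1(c')$. Finally $(vii)\Rightarrow(i)$ is Osajda's local-to-global theorem for weakly systolic complexes, which I would invoke from \cite{Osajda} (this is exactly the statement that simply connected flag complexes with the $\widehat W_5$-condition and no induced $C_4$ are weakly systolic), or reprove by the usual disk-diagram/LC-contraction induction on $\mathrm{dist}(\sigma^*,\cdot)$ to verify each $SD_n(v)$.

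I expect the quadrangle-condition verification inside $(ii)\Rightarrow(iii)$ and the disk-diagram argument in $(vii)\Rightarrow(i)$ to be the two genuinely substantive parts; everything else is bookkeeping with intervals, thinness, and $C_4$-freeness.
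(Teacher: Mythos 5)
Your route $(i)\Leftrightarrow(ii)\Rightarrow(iii)\Rightarrow(iv)\Rightarrow(v)\Rightarrow(vi)\Rightarrow(vii)\Rightarrow(i)$ matches the paper's cycle in spirit, but the step $(vi)\Rightarrow(vii)$ as you sketch it would fail, in both of its halves. For the $\widehat{W}_5$-condition you propose to ``take the $C_5$ in a given $\widehat{W}_5$, find its hub $c'$ via (vi), and check that $c'$ also dominates the extra vertex $a$.'' The only induced $C_5$ inside a $\widehat{W}_5$ is the rim $x_1\ldots x_5$, and a hub that (vi) hands you may perfectly well be the original center $c$, which by definition of $\widehat{W}_5$ is \emph{not} adjacent to $a$; so there is nothing to ``check,'' and no local argument around one hub produces the required vertex adjacent to all seven vertices of $\widehat{W}_5$. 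The paper's proof of this step is genuinely longer: it uses convexity of balls to show $d(a,x_4)=2$, takes a common neighbor $u$ of $a$ and $x_4$, runs a case analysis, applies (vi) a second time to a new $5$-cycle (e.g.\ $a,x_2,x_3,x_4,u$), and then uses $C_4$-freeness repeatedly to force the new center to be adjacent to all of $x_1,\ldots,x_5,c,a$. Similarly, your justification of simple connectivity, ``balls convex $\Rightarrow$ every cycle can be filled,'' is false as stated: $C_5$ itself has convex balls but its flag complex is not simply connected. The correct argument (the paper's) is that in a graph with convex balls the isometric cycles, which generate the cycle space, have length $3$ or $5$, and the $5$-cycles are filled by triangles precisely because of the $W_5$-extension hypothesis in (vi); you cannot drop that second ingredient.

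Two smaller points. First, the step you flag as the ``main obstacle,'' verifying the quadrangle condition in $(ii)\Rightarrow(iii)$, is actually trivial: in the quadrangle configuration $v$ and $w$ are two neighbors of $z$ lying in $I(u,z)$, so thinness forces them to be adjacent, contradicting $d(v,w)=2$; hence (Q) holds vacuously in any thin graph, with no case analysis needed. Second, for $(vii)\Rightarrow(i)$ you either cite \cite{Osajda} (defensible, since the paper credits that characterization to \cite{Osajda}, though it makes your proof partly circular relative to this theorem) or gesture at ``the usual disk-diagram induction''; be aware that this is where the paper does its real technical work (a minimal-disk-diagram analysis showing the diagram is a $5$-wheel or a systolic plane triangulation, using flips at degree-$5$ interior vertices and the $\widehat{W}_5$-condition, followed by Gauss--Bonnet corner counting to deduce (T) and (Q)), none of which appears in your sketch.
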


\begin{proof}
First we show that the conditions (i) through (v) are equivalent and then we show that these conditions are equivalent to (vi) and to (vii).
The implications (i)$\Rightarrow$(ii) and (iii)$\Rightarrow$(iv) are obvious.

\medskip
(ii)$\Rightarrow$(iii): The condition (V)
implies that all vertices of $I(u,v)$ adjacent to $v$ are pairwise adjacent, i.e.,
that $G({\bold X})$ is thin. On the other hand, from the condition (E)
we conclude that if $1=d(v,w)<d(u,v)=d(u,w)=i+1,$ then $v$ and $w$
have a common neighbor $x$ in the  sphere $S_{i}(u),$ implying the triangle condition. Finally, in thin graphs the quadrangle condition is automatically
satisfied. This shows that  $G({\bold X})$ is a weakly modular thin graph.

\medskip
(iv)$\Rightarrow$(v): Let $B_i(u)$ be any ball in $G({\bold X})$.
Since $G({\bold X})$ is weakly modular and $B_i(u)$ is a connected subgraph,
to show that $B_i(u)$
is convex it suffices
to show that $B_i(u)$
is locally convex, i.e., {\it if $x,y\in B_i(u)$ and $d(x,y)=2,$ then $I(x,y)\subseteq B_k(u)$}; cf.\ \cite[Theorem 7(a)]{Ch_triangle}.
 Suppose by way of contradiction that $z\in I(x,y)\setminus B_i(u).$
Then necessarily $d(x,u)=d(y,u)=i$ and $d(z,u)=i+1.$ Applying the quadrangle condition, we infer that there exists a vertex $z'$ adjacent to $x$ and $y$ at
distance $i-1$ from $u.$ As a result, the vertices $x,z,y,z'$ induce a forbidden $4$--cycle, a contradiction.

\medskip
(v)$\Rightarrow$(i): Pick a simplex $\sigma$ in the sphere $S_{i+1}(u).$ Denote by $\sigma_0$ the set of all vertices $x\in S_i(u)$ such that
$\sigma\cup \{ x\}$ is a simplex of $\bold X$. Since the balls of $G({\bold X})$ are convex, necessarily
any two vertices of $\sigma_0$ are adjacent. Thus $\sigma_0$ and $\sigma\cup \sigma_0$
induce complete subgraphs of $G({\bold X}).$ Since $\bold X$ is a flag complex,  $\sigma_0$ and $\sigma\cup \sigma_0$ are simplices. Notice that
obviously $\sigma'\subseteq \sigma_0$ holds for any  other simplex $\sigma'\subseteq S_i(u)$ such that $\sigma\cup\sigma'\in {\bold X}.$ Therefore, to establish the $SD_i(u)$ property it remains to show
that $\sigma_0$ is non-empty.
This is obviously true if $\sigma$ is a vertex. Thus we suppose that $\sigma$ contains at least two vertices.
Let $x$ be a vertex of $S_i(u)$ which is adjacent to the maximum number of vertices of $\sigma.$ Since $G({\bold X})$ is weakly modular and
$\sigma$ is contained in $S_{i+1}(u)$, the vertex $x$ must be  adjacent to at least two vertices of $\sigma.$  Suppose by way of contradiction that $x$ is not adjacent
to a vertex $v\in \sigma.$ Pick any neighbor $w$ of $x$ in $\sigma.$ By the triangle condition, there exists a vertex $y\in S_i(u)$ adjacent to $v$ and $w.$ Since $w$
is adjacent to $x,y\in S_i(u)$ and $w\in S_{i+1}(u),$ the convexity of $B_i(u)$ implies that $x$ and $y$ are adjacent. Pick any other vertex $w'$ of
$\sigma$ adjacent to $x.$ Since $x$ is not adjacent to $v$ and $G({\bold X})$ does not contain induced $4$--cycles,
the vertices $y$ and $w'$ must be adjacent. Hence, $y$ is adjacent to $v\in \sigma$ and to all neighbors of $x$ in $\sigma,$ contrary to the choice of $x.$
Thus $x$ is adjacent to all vertices of $\sigma,$ i.e.,
$\sigma_0\ne \emptyset.$ This shows that $\bold X$ satisfies the $SD_n(u)$ property.

\medskip
(v)$\Rightarrow$(vi): Pick a $5$--cycle induced by the vertices $x_1,x_2,x_3,x_4,x_5.$ Since $d(x_4,x_1)=d(x_4,x_2)=2,$ by the triangle condition there exists a
vertex $y$ adjacent to $x_1,x_2,$ and $x_4.$ Since $G({\bold X})$ does not contain induced $4$--cycles, necessarily $y$ must be also adjacent to $x_3$ and $x_5,$
yielding a $5$--wheel.

\medskip
(vi)$\Rightarrow$(vii): To show that a flag complex ${\bold X}$ is
simply connected, it suffices to prove  that every simple cycle in the underlying graph of $\bold X$
is a modulo 2 sum of its triangular faces. Notice that the isometric cycles of an arbitrary graph $G$
constitute a basis of cycles of $G$. Since  $G({\bold X})$ is a graph with convex balls,
the isometric cycles of
$G({\bold X})$ have length 3 or 5 \cite{FaJa,SoCh}. By (vi), any $5$--cycle $C$ of $G({\bold X})$ extends to a $5$--wheel,
thus $C$ is a modulo 2 sum of triangles. Hence ${\bold X}$ is indeed simply connected. That $\bold X$ does not contain induced $4$--cycles and $4$--wheels follows from the convexity of balls.
Finally, pick an extended $5$--wheel $\widehat{W}_5:$ let $x_1,x_2,x_3,x_4,x_5$ be the vertices of the $5$--cycle, $c$ be the center of the $5$--wheel, and $x_1,x_2,a$ be the
vertices of the pendant triangle. Since $x_3$ and $x_5$ are not adjacent and the balls of $G({\bold X})$ are convex, necessarily $d(a,x_4)=2.$ Let $u$ be a common neighbor
of $a$ and $x_4.$ If $u$ is adjacent to one of the vertices $x_2$ and $x_3,$ then
to avoid induced $4$--cycles (forbidden by the convexity of balls in $G(\bf X)$), $u$ will be also adjacent to the second vertex and to $c$.
But if $u$ is adjacent to $c,$ then it will be adjacent to $x_1$ and therefore to $x_5$ as well. Hence, in this case $u$ will be adjacent to all
vertices $x_1,x_2,x_3,x_4,x_5,$ and $c,$ and we are done. So, we can suppose that $u$ is not adjacent to any one of the vertices $x_1,x_2,x_3,x_5,$ and $c.$ As a result, we obtain two
$5$--cycles induced by the vertices $a,x_2,x_3,x_4,u$ and $a,x_1,x_5,x_4,u.$ Each of these cycles extends to a $5$--wheel. Let $v$ be the center of the $5$--wheel extending the first cycle.
To avoid a $4$--cycle induced by the vertices $x_2,v,x_4,c,$ the vertices $v$ and $c$ must be adjacent. Subsequently, to avoid a $4$--cycle induced by the vertices $c,v,a,x_1,$ the vertices
$v$ and $x_1$ must be adjacent. Finally, to avoid a $4$--cycle induced by $x_1,v,x_4,x_5,$ the vertices $v$ and $x_5$ must be adjacent. In this way, we deduce that $v$ is adjacent
to all six vertices of  $\widehat{W}_5$, establishing the $\widehat{W}_5$--condition.
\medskip

(vii)$\Rightarrow$(iv): To prove this implication, as in \cite{Ch_CAT}, we will use minimal disk diagrams.
Let ${\mathcal D}$ and ${\bold X}$ be two simplicial complexes. A map
$\varphi:V({\mathcal D})\rightarrow V({\bold X})$ is called {\it simplicial} if $\varphi(\sigma)\in {\bold X}$ for
all $\sigma\in {\mathcal D}.$
If ${\mathcal D}$ is a planar triangulation (i.e.\ the $1$--skeleton of ${\mathcal D}$ is an embedded
planar graph whose all interior $2$--faces are triangles) and $C=\varphi(\partial {\mathcal D}),$
then $({\mathcal D},\varphi)$ is called  a {\it singular disk diagram} (or Van Kampen diagram)
for $C$ (for more details see \cite[Chapter V]{LySch}).
According to Van Kampen's lemma (\cite{LySch}, pp.150--151), for every
cycle $C$ of a simply connected simplicial complex  one can construct a
singular disk diagram. A singular disk diagram with no cut vertices (i.e.,
its $1$--skeleton is $2$--connected) is
called a {\it disk diagram.}  A {\it minimal (singular) disk} for $C$ is a
(singular) disk diagram ${\mathcal D}$ for $C$ with a minimum number of $2$--faces.
This number is called the {\it (combinatorial) area} of $C$ and is
denoted Area$(C).$ The  minimal disks diagrams $({\mathcal D},\varphi) $ of simple
cycles $C$ in $1$--skeletons of  simply connected simplicial complexes have the
following properties \cite{Ch_CAT}: (1) $\varphi$ bijectively maps
$\partial {\mathcal D}$ to $C$ and (2) the image of a $2$--simplex of $\mathcal D$ under
$\varphi$ is a $2$--simplex, and two adjacent $2$--simplices of $\mathcal D$ have distinct images under $\varphi.$

Let $C$ be a simple cycle in the underlying graph $G({\bold X})$ of a flag simplicial complex
$\bold X$ satisfying the condition (vii).

\medskip\noindent
{\bf Claim 1:} {\it If $C$ has length 5, then the minimal disk diagram for $C$ is a $5$--wheel. If the length of $C$ is not $5$, then $C$
admits a minimal disk diagram $\mathcal D$ which is a systolic complex, i.e., a plane triangulation
whose all inner vertices have degrees $\ge 6.$}

\medskip\noindent
{\bf Proof of Claim 1:}  First we show that any minimal disk diagram
$\mathcal D$ of $C$ does not contain interior vertices of degrees 3 and 4.
Let $x$ be any interior vertex  of $\mathcal D$.
Let $x_1,\ldots,x_k$ be the cyclically ordered neighbors of $x$
and let $\sigma_1,\sigma_2,\ldots, \sigma_k$ be the faces incident to $x$, where
$\sigma_i=xx_ix_{i+1(mod ~k)}$ $(i=1,\ldots,k)$. Trivially, $k\geq 3.$ Suppose by way of contradiction that $k\leq 4.$
By properties of minimal disk diagrams,  $\varphi(\sigma_1),\ldots,\varphi(\sigma_k)$ are
distinct $2$--simplices of ${\bold X}.$

\medskip\noindent
{\it Case 1:} $k=3$. Then the $2$--simplices  $\varphi(\sigma_1),\varphi(\sigma_2),
\varphi(\sigma_3)$ of $\bold X$ intersect in $\varphi(x)$ and pairwise share an edge of ${\bold X}.$ Since ${\bold X}$ is flag,
they are contained in a $3$--simplex of ${\bold X}.$ This implies that
$\delta=\varphi(x_1)\varphi(x_2)\varphi(x_3)$ is a $2$--face of ${\bold X}.$   Let ${\mathcal D}'$ be a
disk triangulation obtained from ${\mathcal D}$ by deleting the vertex $x$
and the triangles $\sigma_1, \sigma_2,\sigma_3,$ and  adding
the $2$--simplex $x_1x_2x_3.$  The map $\varphi: V({\mathcal D}')\rightarrow V({\bold X})$  is
simplicial, because it maps $x_1x_2x_3$  to  $\delta.$ Therefore
$({\mathcal D}',\varphi)$ is a  disk diagram for $C,$ contrary  to the minimality choice of ${\mathcal D}.$

\medskip\noindent
{\it Case 2:} $k=4$. Since two adjacent $2$--simplices of $\mathcal D$ have distinct images under $\varphi$, the cycle $C'=(x_1,x_2,x_3,x_4,x_1)$ is sent
to a $4$--cycle $\varphi(C')$ of lk$(\varphi(x),{\bold X})$. Since $G(\bf X)$ does not contain induced $4$--cycles, two opposite vertices of $\varphi(C')$, say $\varphi(x_1)$
and $\varphi(x_3),$ are adjacent. Consequently, since $\bf X$ is flag, $\delta'=\varphi(x_1)\varphi(x_3)\varphi(x_2)$
and $\delta''=\varphi(x_1)\varphi(x_3)\varphi(x_4)$ are $2$--faces of ${\bold X}.$ Let ${\mathcal D}'$ be a disk
triangulation obtained from ${\mathcal D}$ by deleting the vertex $x$ and the triangles
$\sigma_i (i=1,\ldots,4),$ and adding the $2$--simplices
$\sigma'=x_1x_3x_2$ and $\sigma''=x_1x_3x_4.$ The map $\varphi$ remains simplicial,
since it sends $\sigma',\sigma''$ to $\delta',\delta'',$ respectively,
contrary to the minimality choice of ${\mathcal D}.$
\medskip

This establishes that the degree of each interior vertex $x$ of any
minimal disk diagram is $\ge 5.$
Suppose now additionally that $\mathcal D$ is a minimal disk diagram for$C$ having a minimum
number of inner vertices of degree 5. We will denote the vertices of $\mathcal D$ and their images in $\bold X$
under $\varphi$ by the same symbols but specifying each time their position. Let $x$ be any interior vertex  of $\mathcal D$ of degree $5$ and
let $x_1,\ldots,x_5$ be the neighbors of $x.$ If
$C=(x_1,x_2,x_3,x_4,x_5,x_1)$ then we are done because $\mathcal D$ is a $5$--wheel. If
$C\neq (x_1,x_2,x_3,x_4,x_5,x_1)$ then one of the edges of the $5$--cycle
$(x_1,x_2,x_3,x_4,x_5,x_1),$ say $x_1x_2$, belongs in ${\mathcal D}$ to the second triangle $x_1x_2x_6.$
The minimality of $\mathcal D$ implies  that $x,x_1,x_2,x_3,x_4,x_5,x_6$ induce in $\bold X$ a
$\widehat{W}_5$ or that $x$ and $x_6$ are adjacent in $\bold X$. In the first case, by the $\widehat{W}_5$--condition,
there exists a vertex $y$ of $\bold X$ which is adjacent to all vertices of this $\widehat{W}_5.$
Let ${\mathcal D}'$ be a disk triangulation obtained from ${\mathcal D}$ by deleting the vertex $x$ and the five triangles incident to $x$ as well as the triangle $x_1x_2x_6$ and
replacing them by the six triangles of the resulting $6$--wheel centered at $y$ (we call this operation a {\it flip}). In the second case, let ${\mathcal D}'$ be a disk
triangulation obtained from ${\mathcal D}$ by deleting the the five triangles incident to $x$ as well as the triangle $x_1x_2x_6$ and
replacing them by the six triangles of the resulting $6$--wheel centered at $x.$ In both cases, the resulting map $\varphi$ remains simplicial.
${\mathcal D}'$ has the same number of triangles as $\mathcal D,$ therefore ${\mathcal D}'$ is also a minimal disk diagram for $C$.
The flip replaces in the first case the
vertex $x$ of degree 5 by the vertex $y$ of degree 6. In the second case, it increases  the degree of $x$ from 5 to 6. In both cases,  it preserves
the degrees of all other vertices except the vertices $x_1$ and $x_2,$ whose degrees
decrease by 1. Since, by the minimality choice
of $\mathcal D$, the disk diagram ${\mathcal D}'$ has at least as many inner vertices of degree 5 as ${\mathcal D},$ necessarily at least one of the vertices $x_1,x_2,$ say $x_1$,
is an inner vertex  of degree at most 6 of $\mathcal D$.   If the degree of $x_1$ in $\mathcal D$  is 5,
then in ${\mathcal D}'$ the degree of $x_1$ will be 4, which is impossible by what has been shown above because ${\mathcal D}'$ is also a minimal disk diagram and $x_1$
is an interior vertex of ${\mathcal D}'.$ Hence the degree of $x_1$ in $\mathcal D$ is 6 and its neighbors constitute an induced (in ${\mathcal D}$) $6$--cycle
$(x_6,x_2,x,x_5,u,v,x_6).$

\medskip\noindent
{\it Case 1:} $x$ and $x_6$ are not adjacent in $\bold X$. Since $\bold X$ does not contain induced $C_4$ and the minimal disk diagrams for $C$
do not contain interior vertices of degree 3 and 4, it can be easily shown that
the images in $\bold X$  of the vertices $x_5,y,x_6,v,u,x_1,x_4$ induce a $\widehat{W}_5$ constituted by the $5$--wheel centered at $x_1$ and the triangle $x_4yx_5.$
By the $\widehat{W}_5$--condition, there exists a vertex $z$ of $\bold X$ which is adjacent to all vertices of $\widehat{W}_5.$ If $z$ is adjacent in
$\bold X$ with all vertices of the $7$--cycle $(u,v,x_6,x_2,x_3,x_4,x_5,u),$ then replacing in $\mathcal D$ the 9 triangles incident to $x$ and $x_1$ by the 7 triangles
of $\bold X$  incident to $z,$ we will obtain a disk diagram ${\mathcal D}''$ for $C$ having less triangles than $\mathcal D$, contrary to the minimality of
$\mathcal D$. Therefore $z$ is different from $x$ and is not adjacent to one of the vertices $x_2,x_3.$ Since $x_1$ and $x_4$ are not adjacent and both $x$ and $z$ are adjacent to $x_1,x_4,$ to avoid an induced $C_4$ we conclude that $z$ is adjacent in $\bold X$ to $x.$ If $z$ is not adjacent to $x_2,$ then, since $x$ and $x_6$ are not adjacent, we will obtain a $C_4$ induced by
$x,z,x_6,x_2.$ Thus $z$ is adjacent to $x_2,$ and therefore $z$ is not adjacent to $x_3.$ Since both $z$ and $x_3$ are adjacent to nonadjacent vertices $x_2$ and $x_4,$  we will obtain a $C_4$ induced by $z,x_2,x_3,x_4.$ This contradiction shows that the degree of $x_1$ in $\mathcal D$ is at least 7.

\medskip\noindent
{\it Case 2:} $x$ and $x_6$ are adjacent in $\bold X$. Again, using the fact that the minimal disk diagrams for $C$ do not contain interior vertices of degree 3 and 4, the fact that $\bold X$ does not contain induced $C_4,$ it can be easily shown that $d(x,u)=2.$ Therefore  the vertices $x_1,x_2,x_3,x_4,x_5,x,u$ induce a $\widehat W_5$ constituted by
the $5$--wheel centered at $x$ and the triangle $x_1ux_5$. Thus, by the $\widehat W_5$--condition, there exists a vertex $y'\neq x$ containing $\widehat W_5$ in its link. Then considering the minimal disk diagram obtained by the flip exchanging $x$ and $y'$ we conclude that the vertices $u,v,x_6,x_2,y',x_1$ induce a $5$--wheel. Together with the vertex $x_3$ they induce
a $\widehat W_5$, so that, by $\widehat W_5$--condition, there exists a vertex $z$ adjacent to all the vertices $u,v,x_6,x_2,y',x_1, x_3$. If $z$ is adjacent to $x_4$ and $x_5$ then we get a disk diagram for $C$ having less triangles than $\mathcal D$, which contradicts the minimality of $\mathcal D$. If $z$ is not adjacent to one of the vertices $x_4,x_5$ then we also get a contradiction arguing as in Case 1. Therefore, in our case the degree of $x_1$ in $\mathcal D$ is also at least 7. This final contradiction shows that all interior vertices of $\mathcal D$ have degrees $\ge 6,$ establishing Claim 1.

\bigskip
From Claim 1 we deduce that any simple cycle $C$ of the underlying graph of $\bold X$ admits a minimal disk diagram $\mathcal D$ which is either
a $5$--wheel or a systolic plane triangulation.
We will refer to a degree two boundary vertex $v$ of $\mathcal D$ as a {\it corner of first type} and to a degree three boundary vertex $v$ of $\mathcal D$ as a {\it corner of second type}.
In the first case, the two neighbors of $v$ are adjacent. In the second
case, $v$ and its neighbors in $\partial {\mathcal D}$ are adjacent to the third
neighbor of $v.$
If ${\mathcal D}$ is a $5$--wheel then it has five corners of second type. Otherwise ${\mathcal D}$ is a systolic plane triangulation and we can use the Gauss-Bonnet formula ``sum over interior vertices of six minus degree
plus sum over boundary vertices of three minus degree equals six times Euler
characteristic"; see \cite[pp.\ 342--346]{LySch}. From this formula we infer that $\mathcal D$ contains
at least three corners, and if $\mathcal D$ has exactly three corners then they
are all of first type. Furthermore, if $\mathcal D$ contains exactly four corners, then at least two of them
are corners of first type.

\medskip\noindent
{\bf Claim 2:} {\it
$G({\bold X})$ is weakly modular, i.e.\ $G({\bold X})$ satisfies the triangle and quadrangle conditions.}

\medskip\noindent
{\bf Proof of Claim 2:}
To verify the triangle condition,
let $u,v,w$ be three vertices with
$1=d(v,w)\leq d(u,v)=d(u,w)=k.$ We claim that if
$I(u,v)\cap I(u,w)=\{ u\},$ then $k=1.$  Suppose not. Pick two shortest paths
$P'$ and $P''$ joining the pairs $u,v$ and $u,w,$ respectively, such that
the cycle  $C$ composed of $P',P''$ and the edge $vw$ has  minimal combinatorial area Area$(C)$ among all cycles constituted by the edge $vw$ and shortest paths connecting $u$ with $v$ and $w$ (the choice of $v,w$ implies that $C$ is a simple cycle).
Let $\mathcal D$ be a minimal disk diagram for $C$ satisfying Claim 1. Then either $\mathcal D$ has a corner  $x$
different from $u,v,w$ or the vertices $u,v,w$ are the only corners of $\mathcal D.$ In the second case, $u,v,w$ are all three corners of first
type, therefore the two neighbors of $v$ in $C$ will be adjacent. This means that $w$ will be adjacent to
the neighbor of $v$ in $P',$ contrary to $I(u,v)\cap I(u,w)=\{ u\}.$ Thus we can assume that a corner $x$ exists and $x$ is not one of $u,
v$ or $w$.
Without loss of generality we can assume $x$ is on the path $P'$. Let
$y$ and $z$ be its neighbors on $P'$. Note that $x$ cannot be of first type, since otherwise $y$ and $z$ are adjacent, contrary to
the assumption that $P'$ is a shortest path.
Thus $x$ is of the second type and there is a vertex $p$ of $\mathcal D$
adjacent to $x,y,z.$  If we replace in $P'$ the vertex $x$ by $p,$ we will
obtain a new shortest path between $u$ and $v.$  Together with $P''$ and
the edge $vw$ this path  forms a cycle $C'$ whose area is strictly smaller
than Area$(C),$ contrary to the choice of $C.$ This establishes the triangle condition.

To verify the quadrangle
condition, suppose by way of contradiction that we can find distinct
vertices $u,v,w,z$ such that $v,w\in I(u,z)$ are neighbors of $z$
and $I(u,v)\cap I(u,w)=\{u\},$ however $u$ is not adjacent to $v$ and $w.$
Again, select two shortest paths $P'$ and $P''$ between $u,v$ and $u,w,$
respectively, so that the cycle $C$ composed of $P',P''$ and the
edges $vz$ and $zw$ has minimum area.  Choose a minimal disk $\mathcal D$
of $C$ as in Claim 1. From the initial hypothesis concerning the vertices $u,v,w,z$
we deduce that $\mathcal D$ has at most one corner of first type located at $u.$
Hence $\mathcal D$ contains at least four corners of second type. Since one corner $x$ is distinct
from $u,v,w,z,$ then proceeding in the same way as in the triangle condition case, we will
obtain a contradiction with the choice of the paths $P',P''.$ This shows that $u$ is adjacent to $v,w,$
establishing the quadrangle condition. This concludes the proof of Claim 2.
\medskip

By Claim 2 $G({\bold X})$ is a weakly modular graph. On the other hand, by condition (vii) $G({\bold X})$ does not contain induced $C_4$. This
concludes the proof of the implication (vii)$\Rightarrow$(iv) and of the theorem.
\end{proof}

In the analysis of his construction of locally homogeneous graphs $H$ having a given regular graph  of girth $\ge 6$
(i.e., $6$--large) as a link of each vertex of $H$, Weetman \cite{Wee} introduced {\it quasitrees} as the graphs $G=(V,E)$
satisfying the following two conditions for each vertex $v:$ (F1) each vertex $x\in S_{i+1}(v)$ has one
or two adjacent neighbors in $S_i(v);$ (F2) any two adjacent vertices $x,y\in S_{i+1}(v)$ have a common neighbor $z\in S_i(v).$
It can be easily seen that (F2) is a reformulation of the edge condition (E) (alias the triangle condition). On the other hand,
(F1) is a particular case of the vertex condition (V). From Theorem \ref{weakly-systolic}(ii) we immediately obtain the following
observation:

\begin{corollary}\label{quasitrees} The simplicial complexes derived from quasitrees are weakly systolic. In particular, quasitrees
are weakly bridged graphs.
\end{corollary}

The $5$--wheel is an example of a quasitree which is not a bridged graph, thus not all simplicial complexes derived from quasitrees are systolic.

\section{Dismantlability of weakly bridged graphs}
\label{dismantlability}

In this section, we show that the underlying graphs of weakly systolic complexes are dismantlable
and that a dismantling order can be obtained using LexBFS. Then we use this result to deduce
several consequences about collapsibility of weakly
systolic complexes and fixed simplices. Other consequences of dismantling are given in subsequent sections.

\begin{theorem}[LexBFS dismantlability]
\label{dismantle}
Any LexBFS ordering of a locally finite weakly bridged graph $G$ is a dismantling ordering.
\end{theorem}

\begin{proof} We will establish the result for finite weakly bridged graphs.
The proof in the locally finite case is completely similar. Let $v_n,\ldots,v_1$ be the total order returned by the LexBFS starting from
the basepoint $u=v_n.$ Let $G_i$ be the subgraph of $G$ induced by the vertices $v_n,\ldots,v_i.$
For a vertex $v\ne u$ of $G,$ denote by $f(v)$ its father in the LexBFS
tree $T_u,$ by $L(v)$ the list of all neighbors of $v$ labeled before $v,$ and by $\alpha(v)$
the label of $v$ (i.e., if $v=v_i,$ then $\alpha(v)=i$). We decompose the label $L(v)$ of each vertex $v$ into two parts $L'(v)$ and $L''(v):$
if $d(v,u)=i,$ then $L'(v)=L(v)\cap S_{i-1}(u)$ and $L''(v)=L(v)\cap S_i(u).$  Notice that in the lexicographic order of $L(v),$ all vertices
of $L'(v)$ precede the vertices of $L''(v);$ in particular, the father of $v$ belongs to $L'(v).$
The proof of the theorem is a consequence of the following assertion, which we call the {\it fellow traveler property}:

\medskip\noindent
{\bf Fellow Traveler Property:} {\it If $v,w$ are adjacent vertices of $G,$ then their fathers $v'=f(v)$ and $w'=f(w)$ either coincide or are adjacent. If $v'$ and $w'$ are adjacent and
$\alpha(w)<\alpha(v),$ then $w'$ is adjacent to $v$ and $v'$ is not adjacent to $w.$}

\medskip
Indeed, if this assertion holds, then  we claim that $v_n,\ldots,v_1$ is a dismantling order. To see this, it suffices to show
that any vertex $v_i$ is dominated in $G_i$ by its father $f(v_i)$ in the LexBFS tree $T_u.$ Pick any neighbor $v_j$ of $v_i$
in $G_i.$ We assert that $v_j$ coincides or is adjacent to $f(v_i).$ This is obviously true if $f(v_j)=f(v_i).$ Otherwise, if
$f(v_i)\ne f(v_j),$ then the Fellow Traveler Property implies that $f(v_i)$ and $f(v_j)$ are adjacent and since $i<j$ that $v_j$ is adjacent to $f(v_i).$
This shows that indeed any LexBFS order is a dismantling order.

Therefore, it remains to prove the Fellow Traveler Property which we establish in the following lemma.

\begin{lemma}
\label{feltra}
$G$ satisfies the Fellow Traveler Property.
\end{lemma}

\noindent
{\it Proof of Lemma \ref{feltra}.}
We proceed by induction on $i+1:=\mbox{max}\{ d(u,v),d(u,w)\}.$
We distinguish two cases: $d(u,v)<d(u,w)$ and $d(u,v)=d(u,w)=i+1$.
\medskip

\noindent
{\it Case 1: $d(u,v)<d(u,w).$}
Then $v,w'\in I(w,u)$ and since $G$ is thin, we conclude that $v$ and $w'$ either coincide or are adjacent. In the first case we are done because
$v$ (and therefore $w'$) is adjacent to its father $v'=f(v)$. If $v$ and $w'$ are adjacent, since $i=d(u,v)=d(u,w'),$
the vertices $v'$ and $f(w')$ coincide or are adjacent  by the induction assumption. Again, if $v'=f(w'),$ the assertion is immediate. Now suppose that $v'$ and $f(w')$
are adjacent. Since $w'=f(w)$ was labeled before $v$ (otherwise the father of $w$ is $v$ and not $w'$), $f(w')$ must be labeled before $v',$
therefore by the induction hypothesis we deduce that $v'=f(v)$ must be adjacent to $w'=f(w).$ This concludes the analysis of the case
$d(u,v)<d(u,w).$
\medskip

\noindent
{\it Case 2: $d(u,v)=d(u,w)=i+1.$}
Suppose, without loss of generality that $\alpha(w)<\alpha(v).$ If the vertices $v'=f(v)$ and $w'=f(w)$ coincide, then we are done. If the vertices
$v'$ and $w'$ are adjacent, then  the vertices $v,w,w',v'$ define a $4$--cycle. Since $G$ is weakly bridged, by Theorem \ref{weakly-systolic} this cycle cannot be induced. Since
$v$ was labeled before $w,$ the vertex $v'$ must be labeled before $w'.$ Therefore, if $v'$ is adjacent to $w,$ then LexBFS will label $w$ from $v'$
and not from $w',$ giving a contradiction. Thus $v'$ and $w$ are not adjacent, showing that $w'$ must be adjacent to $v,$ establishing the required assertion. So, assume
by way of contradiction that the vertices $v'$ and $w'$ are not adjacent in $G.$ Then $w'$ is not adjacent to $v,$ otherwise $w',v'\in B_i(u)$ and
$v\in I(v',w')\cap S_{i+1}(u),$ contrary to the convexity of the ball
$B_i(u)$ (similarly, $v'$ is  not adjacent to $w$).

Since $G$ is weakly modular by Theorem \ref{weakly-systolic}(iii), the
triangle condition applied to the vertices $v,w,$ and $u$ implies that there
exists a common neighbor $s$ of $v$ and $w$ located at distance $i$
from $u.$ Denote by $S$ the set of all such vertices $s.$ From the
property $SD_i(u)$ we infer that $S$ is a simplex of $\bold X$ (i.e.,
its vertices are pairwise adjacent in $G$). Since $v'$ and $w'$ do
not belong to $S,$ necessarily all vertices of $S$ have been labeled
later than $v'$ and $w'$ (but obviously before $v$ and $w$). Pick a
vertex $s$ in $S$ with the largest label $\alpha(s)$ and set
$z:=f(s).$ By the induction assumption applied to the pairs of adjacent
vertices $\{ v',s\}$ and $\{ s,w'\}$, we conclude that the vertices
of each of the pairs $\{ f(v'),z\}$ and $\{ z,f(w')\}$ either
coincide or are adjacent. Moreover, in all cases, the vertex $z$
must be adjacent to the vertices $v'$ and $w'.$

\medskip\noindent
{\bf Claim 1:} $L'(v')=L'(s)=L'(w')$ and $z$ is the father of
$v'$ and $w'.$

\medskip\noindent
{\bf Proof of Claim 1:} Since $s$ was labeled later than $v'$ and
$w',$ it suffices to show that $L'(v')=L'(s).$
Indeed, if this is the case, then necessarily $z$ is
the father of $v'.$  Then, as $z$ is adjacent to $w'$ and $\alpha(w')<\alpha(v'),$ necessarily $z$ is also
the father of $w'.$ Now, if $L'(w')$ and $L'(s)=L'(v')$ do
not coincide, since $L'(v')$  lexicographically precedes  $L''(v')$ and $L'(w')$ precedes $L''(w'),$
the fact that LexBFS labeled $v'$ before $w'$ means that
$L'(v')$ lexicographically precedes $L'(w').$  Since $L'(s)=L'(v'),$ then necessarily LexBFS would label
$s$ before $w',$ a contradiction. This shows that the equality of the two labels $L'(s)$ and $L'(v')$
implies the equality of the three labels $L'(v'),L'(s),$ and $L'(w').$

To show that $L'(v')=L'(s),$ since $\alpha(s)<\alpha(v'),$ it suffices to establish only the inclusion
$L'(v')\subseteq L'(s).$ Suppose by way of contradiction that
there exists a vertex in $L'(v')\setminus L'(s)$ i.e., a vertex $x\in S_{i-1}(u)$ which is adjacent to
$v'$ but is not adjacent to $s.$ Let $x$ be the vertex of
$L'(v')\setminus L'(s)$ having the largest label $\alpha(x).$ Since $s$ was labeled by LexBFS later than
$v',$ necessarily any vertex of $L'(s)\setminus L'(v')$
must be labeled later than $x.$  Notice that $x$ cannot be adjacent to $w',$ since otherwise we would obtain an
induced $4$--cycle formed by the vertices $v',s,w',x.$
On the other hand $x$ is adjacent to $z$ because both vertices belong to the convex ball $B_{i-1}(u)$ and both are adjacent to the vertex $v'\in S_i(u)$.
Since $x$ is not adjacent to $v,w,$ and $s,$ we conclude
that the vertices $v,w,w',z,c',s,x$ induce an
extended $5$--wheel $\widehat{W}_5$. By the $\widehat{W}_5$--condition, there exists a vertex $t$ adjacent to all vertices of this
$\widehat{W}_5.$ Notice that $t\in S_i(u)$ because it is adjacent to $x\in S_{i-1}(u)$ and $v\in S_{i+1}(u)$. Hence $t\in S.$
Further $t$
must be adjacent to $z,$ otherwise we obtain a forbidden $4$--cycle induced by the vertices
$s,z,x,$ and $t.$ For the same reason, $t$ must be adjacent to any other vertex $z'$ belonging to
$L'(v')\cap L'(s).$ This means that LexBFS will label $t$ before $s.$ Since $t$ belongs to $S$ and
$\alpha(t)>\alpha (s),$ we obtain a contradiction with the choice of the vertex $s.$ This contradiction
concludes the proof of the Claim 1.

\medskip
We continue with the analysis of Case 2. Since $v'$ and $w'$ are not adjacent and $G$ does not contain
induced $4$--cycles, any vertex $s'\ne s$ adjacent to $v'$ and $w'$ is
also adjacent to $s.$ In particular, this shows that $L''(v')\cap
L''(w')\subseteq L''(s).$ Therefore, if $L''(w')\subseteq L''(v'),$
then $L''(w')\subseteq L''(s).$ Since $v'\in L''(s)\setminus
L''(w')$ and $L'(s)=L'(w')$ by Claim 1, we conclude that the vertex
$s$ must be labeled before $w',$ contrary to the assumption that
$\alpha(s)<\alpha(w').$ Therefore the set $B:=L''(w')\setminus
L''(v')$  is nonempty. Since $v'$ was labeled before $w'$ and
$L'(v')=L'(w')$ by Claim 1, we conclude that the set
$A:=L''(v')\setminus L''(w')$ is nonempty as well. Let $p$ be the
vertex of $A$ with the largest label $\alpha(p)$ and let $q$ be the
vertex of $B$ with the largest label $\alpha(q).$ Since LexBFS labeled
$v'$ before $w'$ and $L'(v')=L'(w')$ holds, necessarily
$\alpha(q)<\alpha(p)$ holds. Since $p\in L''(v'),$ we obtain that
$\alpha(w')<\alpha(v')<\alpha(p).$ Since $v'=f(v)$ and $w'=f(w),$
this shows that $p$ cannot be adjacent to the vertices $v$ and $w.$
If $s$ is adjacent to $p,$ then $p\in L''(s).$ But then from Claim 1
and the inclusion $L''(v')\cap L''(w')\subseteq L''(s)$ we could
infer that LexBFS must label $s$ before $w',$ contrary to the
assumption that $\alpha(s)<\alpha(w').$ Therefore $p$ is not adjacent to
$s$ either. On the other hand, since $\alpha(v')<\alpha(p),$ by the
induction hypothesis applied to the adjacent vertices $p$ and $v',$
we infer that $z=f(v')$ must be adjacent to $p.$ Hence the vertices $v,w,w',z,v',s,p$
induce an extended $5$--wheel. By the $\widehat{W}_5$--condition, there exists a vertex $t$ adjacent to all
these vertices.  Since $C:=L'(v')=L'(w')$ and
$d(v',w')=2,$ to avoid induced $4$--cycles, the vertex $t$ must be
adjacent to any vertex of $C.$ For the same reason, $t$ must be
adjacent to any vertex of $L''(v')\cap L''(w').$ Since additionally
$t$ is adjacent to the vertex $p$ of $A$ with the highest label,
necessarily $t$ will be labeled by LexBFS before $w'$ and $s.$ Since
$t$ is adjacent to $v$ and $w,$ this contradicts the assumption that
$w'=f(w).$ This shows that the initial assumption that $v'$ and
$w'$ are not adjacent lead to a final contradiction. Hence the order returned
by LexBFS is indeed a dismantling order
of the weakly bridged graph $G.$
This completes the proof of the lemma and of the theorem.
\end{proof}

\begin{corollary}
\label{discplx}
Locally finite weakly systolic complex ${\bold X}$ and every its Rips complex ${\bold X}_k$ are LC-contractible
and therefore collapsible.
\end{corollary}
\begin{proof}
Again we consider only the finite case. To show that any finite  weakly systolic complex ${\bold X}$ is LC-contractible it
suffices to notice that, since ${\bold X}$ is a flag complex, the LC-contractibility of ${\bold X}$
is equivalent to the dismantlability of its graph $G=G({\bold X}),$ and hence the result follows from Theorem \ref{dismantle}.

To show that the Rips complex ${\bold X}_k$ is LC-contractible,
since ${\bold X}_k$ is a flag complex, it suffices to show that its graph $G({\bold X}_k)$ is dismantlable.
From the definition of ${\bold X}_k,$ the graph $G({\bold X}_k)$ coincides with the $k$th power $G^k$ of the underlying
graph $G$ of $\bold X$. Now notice that if a vertex $v$ is dominated in $G$ by a vertex $u,$ then $u$ also dominates
$v$ in the graph $G^k.$ Indeed, pick any vertex $x$ adjacent to $v$ in $G^k.$ Then $d(v,x)\le k$ in $G.$ Let $y$ be the neighbor
of $v$ on some shortest path $P$ connecting the vertices $v$ and $x$ in $G.$ Since $u$ dominates $v,$ necessarily $u$ is adjacent to
$y.$ Hence $d(u,x)\le k$ in $G,$ therefore $u$ is adjacent to $x$ in $G^k.$ This shows that $v$ is dominated by $u$
in the graph $G^k$ as well. Therefore the dismantling order of $G$ returned by LexBFS is also a dismantling order of $G^k,$ establishing
that the Rips complex ${\bold X}_k$ is LC-contractible.
\end{proof}
\begin{corollary} \label{Rips} Graphs of Rips complexes ${\bold X}_n$ of
locally finite systolic and weakly systolic complexes are dismantlable.
\end{corollary}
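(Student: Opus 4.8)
The plan is to reduce the statement for Rips complexes ${\bold X}_n$ of locally finite systolic and weakly systolic complexes to Theorem \ref{dismantle} together with the elementary observation about powers of graphs already isolated in its proof. First I would recall that every systolic complex is weakly systolic, so it suffices to treat the weakly systolic case. Given a locally finite weakly systolic complex ${\bold X}$ with underlying weakly bridged graph $G=G({\bold X})$, Theorem \ref{dismantle} furnishes a LexBFS ordering of $G$ which is a dismantling ordering; in particular $G$ is dismantlable, and so is any of its powers $G^n$ by the argument in the last paragraph of the proof of Theorem \ref{dismantle}, namely that whenever $u$ dominates $v$ in $G$, then $u$ dominates $v$ in $G^n$ as well (this uses only that a neighbor on a shortest $G$-path from $v$ to any $x$ with $d(v,x)\le n$ is again a $G$-neighbor of $u$, hence $d(u,x)\le n$). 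Since $G(\bold X_n)=G^n$ by the identification of the Rips complex with the clique complex of the $n$th power of the graph, and $\bold X_n$ is automatically flag, dismantlability of $G^n$ is exactly the conclusion we want.

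Concretely, the key steps in order are: (1) observe systolic $\Rightarrow$ weakly systolic, so reduce to the weakly systolic case; (2) apply Theorem \ref{dismantle} to obtain a dismantling (LexBFS) order $v_m,\dots,v_1$ of $G$; (3) invoke the domination-transfer lemma, $u$ dominates $v$ in $G$ implies $u$ dominates $v$ in $G^n$, to conclude the same order dismantles $G^n$; (4) identify $G^n$ with $G(\bold X_n)$ and note $\bold X_n$ is flag, so the dismantlability of the $1$-skeleton gives the statement. In the locally finite (not necessarily finite) setting one uses the locally finite version of dismantlability and of LexBFS as indicated in Subsection \ref{dislc}, but no new idea is needed: local finiteness of $G$ implies local finiteness of $G^n$ (balls of radius $n$ in a locally finite graph are finite), so the same reduction applies verbatim.

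The proof is essentially a one-line deduction once Theorem \ref{dismantle} is in hand, so there is no real obstacle; the only thing to be slightly careful about is that the statement explicitly mentions both systolic and weakly systolic complexes, and one should remark that the systolic case is not a special case of what was literally stated in Theorem \ref{dismantle} only in name — it is subsumed because systolic complexes are weakly systolic — so the corollary follows for both classes simultaneously. I would therefore keep the proof to two or three sentences: reduce to weakly systolic, cite Theorem \ref{dismantle} for dismantlability of $G$, cite the power-graph observation from its proof for dismantlability of $G^n=G(\bold X_n)$, and conclude.
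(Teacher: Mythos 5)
Your proposal is correct and follows the same route as the paper: the paper derives this corollary directly from Theorem \ref{dismantle}, whose proof already contains the key domination-transfer observation that if $u$ dominates $v$ in $G$ then $u$ dominates $v$ in $G^k$, so the LexBFS dismantling order of $G$ dismantles $G^k=G({\bold X}_k)$. Your additional remark that the systolic case is subsumed because systolic complexes are weakly systolic is exactly the intended reduction, so there is nothing to add.
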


For a locally finite weakly bridged graph $G$ and integer $k$ denote by $G_k$ the subgraph of $G$
induced by the first $k$ labeled vertices in a LexBFS order, i.e., by the vertices of $G$ with $k$ lexicographically largest labels.

\begin{corollary}\label{G_k} Any $G_k$ is an isometric  weakly bridged subgraph of $G.$
\end{corollary}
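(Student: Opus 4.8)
The plan is to show first that each $G_k$ is an isometric subgraph of $G$, and then that isometry together with inheriting a dismantling order forces the weakly bridged property; throughout I would invoke Theorem \ref{weakly-systolic} to pass freely between "weakly bridged graph" and "weakly systolic clique complex with convex balls, weakly modular and $C_4$-free".

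\emph{Isometry.} The key is the Fellow Traveler Property from the proof of Theorem \ref{dismantle}, which I would use to show that the LexBFS tree $T_u$ embeds geodesically into each $G_k$. Concretely: if $v=v_j$ is among the first $k$ labeled vertices, then $d(u,v)=d_{T_u}(u,v)$, and every vertex on the $T_u$-path from $v$ to the root $u=v_n$ has a \emph{larger} label than $v$ (since fathers are labeled before their children in BFS), hence also lies in $G_k$. Thus for every $w\in V(G_k)$ we have $d_{G_k}(u,w)=d_G(u,w)$, i.e.\ $G_k$ is isometrically embedded with respect to the basepoint. To upgrade this to full isometry, I would argue that $G_k$ is in fact a \emph{convex} subgraph of $G$: I would use that $G_k=B$-like sublevel sets behave well under weak modularity. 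More precisely, suppose $x,y\in V(G_k)$, $z\in I_G(x,y)$, $z\notin V(G_k)$; take $z$ with $\alpha(z)$ minimal among such, so $z$ has strictly smaller label than $x$ and $y$. Apply the triangle/quadrangle machinery along a shortest $x$--$y$ path through $z$ together with the paths to the root, and derive — exactly as in the Fellow Traveler argument, via the $\widehat W_5$-condition and $C_4$-freeness — a vertex adjacent to $z$ and to its neighbors on the $x$--$y$ path that LexBFS would have labeled before $z$; iterating shortens the path and contradicts minimality of $\operatorname{Area}$. Hence $G_k$ is convex, in particular isometric.

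\emph{Weakly bridged.} Once $G_k$ is isometric (indeed convex) in $G$, the weakly bridged property is essentially inherited. By Theorem \ref{weakly-systolic}(iv) it suffices to check that $G_k$ is weakly modular and has no induced $C_4$. No induced $C_4$: an induced $4$-cycle in $G_k$ is an induced $4$-cycle in $G$ (the subgraph is induced), which is forbidden. Weak modularity: given $u',v',w'\in V(G_k)$ realizing the hypotheses of condition (T) or (Q) in $G_k$, the distances agree with those in $G$ by convexity, so $G$ supplies the required common neighbor $x$; and since $G_k$ is convex and $x$ lies on a shortest path between vertices of $G_k$ (for (T), $x\in I_G(v',u')$; for (Q), $x\in I_G(v',u')$ as well), we get $x\in V(G_k)$. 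Hence (T) and (Q) hold in $G_k$, so $G_k$ is weakly bridged.

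\emph{Main obstacle.} The routine part is the inheritance of weak modularity and $C_4$-freeness once isometry/convexity is known; the real work is establishing convexity of $G_k$, i.e.\ that no geodesic of $G$ between two early-labeled vertices passes through a late-labeled vertex. This is precisely a LexBFS-flavored statement and I expect to essentially re-run the area-minimization plus $\widehat W_5$-condition argument used for the Fellow Traveler Property, being careful that the vertex produced to shorten the disk diagram genuinely has a larger label than the vertex being bypassed. A cleaner alternative, if available, would be to note that $G_k$ is a retract of $G$: the dismantling order restricted to $V(G_k)$ is a dismantling order, and the successive folds of $G$ that remove $v_1,\dots,v_{?}$ outside $G_k$ exhibit $G_k$ as a retract, and retracts of (weakly) bridged graphs inherit convex balls — but I would still need to confirm that retraction preserves isometry here, so the disk-diagram route is the safer plan to write out.
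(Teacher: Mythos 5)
Your plan hinges on the claim that $G_k$ is convex in $G$, and that claim is false. Take $G$ to be the $6$-wheel (rim $x_1,\dots,x_6$, center $c$), a bridged and hence weakly bridged graph, and run LexBFS from $x_1$, choosing $x_2$ second; LexBFS is then forced to label $c$ third (its label $(7,6)$ beats $x_6$'s label $(7)$), and the order continues $x_6,x_3,x_5,x_4$, so $G_6=G-x_4$. Since $x_4\in I_G(x_3,x_5)$, the subgraph $G_6$ is not convex, although it is isometric ($d_{G_6}(x_3,x_5)=2$ via $c$). So the disk-diagram argument you propose to write out cannot succeed, and with it falls your verification of weak modularity of $G_k$, which also invokes convexity to force the common neighbor supplied by $G$ into $G_k$. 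The fact you hesitated over is the easy and correct one: since by Theorem \ref{dismantle} the LexBFS order is a dismantling order, $G_k$ is obtained from $G$ by a sequence of folds, i.e., it is a retract of $G$, and a retract is automatically an isometric subgraph (the retraction is a nonexpansive graph homomorphism fixing $G_k$, so it sends a $G$-geodesic between vertices of $G_k$ to a walk in $G_k$ of at most the same length). This one line is exactly how the paper obtains isometry; your basepoint observation about the LexBFS tree, while correct, is not needed.

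Even granting isometry, your route to the weakly bridged property is not the workable one. The paper does not verify weak modularity of $G_k$ directly; it uses characterization (vi) of Theorem \ref{weakly-systolic}: isometry gives thinness and excludes isometric cycles of length $>5$, hence $G_k$ has convex balls by \cite{SoCh,FaJa}, and the remaining, genuinely new point—absent from your sketch—is that every induced $C_5$ of $G_k$ extends to a $5$-wheel \emph{inside} $G_k$. The paper proves this by induction on $k$: an induced $C_5$ not handled by the induction hypothesis contains the last labeled vertex $v$; replacing $v$ by its father $f(v)$, which dominates $v$ in $G_k$, either produces a non-induced configuration forcing $f(v)$ to be a center of a $5$-wheel, or an induced $C_5$ in $G_{k-1}$ whose center $w$ must also be adjacent to $v$ to avoid an induced $C_4$. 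Some argument of this kind is required, and your proposal as written does not supply it.
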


\begin{proof} By Theorem \ref{dismantle}, LexBFS returns  a dismantling order of $G$, hence any $G_k$ is
an isometric subgraph of $G.$ Therefore $G_k$ is a thin graph, because any interval $I(x,y)$ in $G_k$ is contained in the
interval of $G$ between $x$ and $y.$ Moreover, $G_k,$ as an isometric subgraph of a $G$, does not contain isometric cycles of length $>5.$
Hence, by a result of \cite{SoCh,FaJa}, $G_k$ is a graph with convex balls. By Theorem \ref{weakly-systolic}(vi) it remains to show that any induced $5$--cycle
$C$ of $G_k$ is included in a $5$--wheel. Suppose by the induction assumption that this is true for $G_{k-1}$. Therefore $C$ must contain the last labeled
vertex of $G_k$. Denote this vertex by $v$ and let $x,y$ be the neighbors of $v$ in $C.$ Let $v'=f(v)$ be the vertex (of $G_k$) dominating $v$ in $G_k.$
Since $C$ is induced, necessarily $v'$ is adjacent to $x$ and $y$ but distinct from these vertices.  Denote by $C'$ the $5$--cycle obtained by replacing in
$C$ the vertex $v$ by $v'.$  If $C'$ is not induced, then $v'$ will be adjacent to a third vertex of $C,$ and since $G_k$ does not contain induced $4$--cycles,
$v'$ will be adjacent to all vertices of $C,$ showing that $C$ extends to a $5$--wheel. So, suppose that $C'$ is induced. Applying  the induction hypothesis to $G_{k-1},$ we conclude that
$C'$ extends to a $5$--wheel in $G_{k-1}.$ Let $w$ be the central vertex of this wheel. To avoid a $4$--cycle induced by the vertices $x,y,v,$ and $w,$ necessarily $v$ and
$w$ must be adjacent. Hence $C$ extends in $G_k$ to a $5$--wheel centered at $w.$ This establishes that $G_k$ is indeed weakly bridged.
\end{proof}

A {\it simplicial map} on a simplicial complex
${\bold X}$  is a map
$\varphi: V({\bold X})\rightarrow V({\bold X})$ such that for all $\sigma\in {\bold X}$ we have $\varphi({\sigma})\in {\bold X}.$
A {\it homomorphism} of a graph $G=(V,E)$ is a simplicial map on a one-dimensional simplicial complex $G$.
A simplicial map fixes a simplex
$\sigma\in {\bold X}$ if $\varphi (\sigma)=\sigma$. Every simplicial map on $\bold X$ is a homomorphism of its graph $G({\bold X}).$  Every
homomorphism of a graph $G$ is a simplicial map on its clique complex ${\bold X}(G).$
Therefore, if ${\bold X}$ is a flag complex, then the set of simplicial maps of ${\bold X}$ coincides with the set of homomorphisms
of its graph $G({\bold X}).$ It is well know (see, for example, \cite[Theorem 2.65]{HeNe}) that any homomorphism of a finite dismantlable graph to
itself fixes some clique. From Theorem \ref{dismantle}
we know that the graphs of weakly systolic complexes as well as the graphs of their Rips
complexes are dismantlable. Therefore from the preceding discussion we obtain:

\begin{corollary}\label{fixed-clique}
Let $\bf X$ be a finite weakly systolic complex. Then any simplicial map of $\bold X$ to itself or of its Rips complex ${\bold X}_k$ to itself
fixes some simplex of the respective complex.
Any homomorphism of $G=G({\bold X})$ to itself
fixes some clique.
\end{corollary}

\section{Fixed point theorem}
\label{fixedpt}
In this section, we establish the fixed point theorem (Theorem C from Introduction).
We start with two auxiliary results. The first is an easy
corollary of Theorem \ref{dismantle}:

\begin{lemma}[Strictly dominated vertex]
\label{str dominat}
Let $\bold X$ be a finite weakly systolic complex.
Then either $\bold X$ is a single simplex or it contains two vertices
$v,w$ such that $B_1(v)$ is a proper subset of $B_1(w)$, i.e. $B_1(v)\subsetneq B_1(w)$ .
\end{lemma}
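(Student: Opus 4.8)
The plan is to extract the desired pair of vertices from the dismantling order produced by LexBFS (Theorem \ref{dismantle}). Run LexBFS on the finite weakly bridged graph $G=G({\bold X})$ from some basepoint, obtaining a dismantling order $v_n,\ldots,v_1$. The key case distinction is whether $v_1$, the last vertex, is dominated by some vertex other than itself already in the full graph $G=G_1$ — equivalently, whether there is a vertex $w\ne v_1$ with $N_1(v_1)\subseteq N_1(w)$. If such a $w$ exists, then since every neighbor of $v_1$ (including $v_1$ itself) is a neighbor of $w$, we get $B_1(v_1)\subseteq B_1(w)$; and the inclusion is proper because $v_1\in B_1(w)$ but $v_1\ne w$ and, unless $w\in B_1(v_1)$, we are already done, while if $w\in N_1(v_1)$ then $w\in B_1(w)\setminus B_1(v_1)$ is impossible — so instead one argues directly that $B_1(v_1)\subsetneq B_1(w)$ since $w\notin B_1(v_1)$ would finish immediately, and if $w\in N(v_1)$ one still has strict inclusion because a dominated vertex $v_1$ together with its dominator: any common structure shows the containment is strict as long as $v_1\ne w$. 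The clean way: $v_1$ has $B_1(v_1)\subseteq B_1(w)$, and since $v_1\ne w$, either $w\notin B_1(v_1)$ (so the inclusion is proper), or $w\in N(v_1)$, but then every neighbor of $w$ is... this needs the non-symmetry, so I would simply observe $v_1\in B_1(w)$, and if equality $B_1(v_1)=B_1(w)$ held then $w$ and $v_1$ would have identical closed neighborhoods, in particular $w\in N(v_1)$ and $v_1\in N(w)$, and one can then replace $v_1$ by $w$ and repeat; since the complex is finite this must terminate, producing a strict inclusion or reducing to the other case.

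Now suppose no vertex other than $v_1$ dominates $v_1$ in $G$. By the dismantling property, $v_1$ is dominated in $G_1=G$ by its father $f(v_1)$, so necessarily $f(v_1)=v_1$, which can only happen if $v_1=v_n$ is the basepoint, i.e. $G$ has a single vertex up to... no: more carefully, the dismantling property gives that $v_1$ is dominated in $G_1$ by some $v_j$ with $j>1$, but $G_1$ consists only of $v_1$, so there is no such $j$ unless $n=1$. Hence if $n\ge 2$ we are in the first case. This reduces matters to: if $n=1$, the complex $\bold X$ has one vertex and is a single simplex; otherwise $n\ge 2$ and we always find a dominated last vertex, hence the pair $v,w$ with $B_1(v)\subsetneq B_1(w)$ — and the strictness is handled by the termination argument above, or alternatively: the hypothesis that $\bold X$ is not a single simplex means $G$ is not a complete graph, so not every vertex dominates every other; the vertex $v_{n-1}$ (the second-to-last labeled vertex) is dominated in $G_{n-1}=\{v_n,v_{n-1}\}$, forcing $v_n$ to dominate $v_{n-1}$, so $N_1(v_{n-1})\cap\{v_n,v_{n-1}\}\subseteq N_1(v_n)$; iterating down the dismantling order and using that $G$ is not complete, some step must give a \emph{strict} inclusion $B_1(v)\subsetneq B_1(w)$ in the whole graph.

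The main obstacle is pinning down the \emph{strictness} of the inclusion cleanly. The existence of a dominated vertex (weak inclusion $B_1(v)\subseteq B_1(w)$) is immediate from Theorem \ref{dismantle}; the content of the lemma is that when $\bold X$ is not a single simplex one can choose the pair so that the inclusion is proper. I would handle this by contradiction: suppose for \emph{every} pair of vertices $v\ne w$ with $B_1(v)\subseteq B_1(w)$ we in fact have $B_1(v)=B_1(w)$. The relation ``$B_1(v)=B_1(w)$'' is an equivalence relation; pick a vertex $v_1$ dominated in $G$ (which exists since $G$ is dismantlable and finite), so $B_1(v_1)=B_1(w_1)$ for its dominator $w_1\ne v_1$; then $v_1$ and $w_1$ are adjacent and have equal closed neighborhoods, which quickly propagates (any neighbor $x$ of $v_1$ has $v_1,w_1\in B_1(x)$, and chasing equalities) to force $G$ to be a complete graph, i.e. $\bold X$ a single simplex — the desired contradiction. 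So the plan is: dismantlability gives weak domination for free; a short propagation argument upgrades the failure of strictness to ``$G$ complete''; contrapositive finishes.
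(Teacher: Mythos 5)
Your proposal correctly identifies that Theorem \ref{dismantle} hands you a weak domination pair for free (the last LexBFS-labeled vertex is dominated in all of $G$ by its father), but the entire content of the lemma is the \emph{strictness} of the inclusion, and that is exactly where your argument has a genuine gap. Your first device (``if $B_1(v_1)=B_1(w)$, replace $v_1$ by $w$ and repeat'') is not a well-defined iteration: nothing guarantees that $w$ is in turn dominated by a third vertex, so there is no process to terminate. Your second device (``iterating down the dismantling order, some step must give a strict inclusion in the whole graph'') is unjustified: for $i$ below the top, the dismantling order only gives domination inside the induced subgraph $G_i$, which need not yield any containment of balls of $G$, let alone a strict one. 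Your final ``clean plan'' rests on the claim that if every pair with $B_1(v)\subseteq B_1(w)$, $v\neq w$, satisfies $B_1(v)=B_1(w)$, then $G$ is complete. This claim is true, but it is essentially a restatement of the lemma, and the one-line hint you give for it (``any neighbor $x$ of $v_1$ has $v_1,w_1\in B_1(x)$, chase equalities'') does not close: the hypothesis must be used globally, e.g.\ by induction on $|V|$ after folding a true-twin pair ($G-v$ is a retract, hence dismantlable, and one checks that dominations in $G-v$ lift to dominations in $G$ because $v$ and its dominator have equal closed balls), or via the true-twin quotient. As written, the key step is asserted, not proved.

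For comparison, the paper gets strictness with a short distance argument attached to the same LexBFS setup. Let $u$ be the root and $v$ the last labeled vertex. If $d(u,v)\geq 2$, then $w=f(v)$ dominates $v$ in all of $G$, so $B_1(v)\subseteq B_1(w)$, and the grandfather $z=f(w)$ satisfies $d(u,z)=d(u,v)-2$, whence $d(v,z)\geq 2$; thus $z\in B_1(w)\setminus B_1(v)$ and the inclusion is proper. If $d(u,v)=1$, then the BFS nature of the order gives $B_1(u)=V({\bold X})$, so either some vertex $w$ has $B_1(w)\subsetneq V({\bold X})=B_1(u)$, or all balls equal $V({\bold X})$ and ${\bold X}$ is a single simplex (being flag). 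If you want to salvage your contradiction route, you must actually prove the propagation step (the fold-and-induct argument above works); otherwise the distance/grandfather trick is the missing idea.
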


\begin{proof} Let $v$ be the last vertex of $\bold X$ labeled  by LexBFS which started at vertex $u$
(see Theorem \ref{dismantle}). If $d(u,v)=1,$ then the construction of our ordering implies that $B_1(u)=V({\bold X})$.
Hence, either there exists a vertex $w$, such that $B_1(w)\subsetneq
V({\bold X})=B_1(u),$ and we are done,  or every two vertices of $\bold X$ are adjacent,  i.e., $\bold X$ is a simplex.
Now suppose that $d(u,v)\geq 2$. Let $w$ be the father of $v$ and let $z$ be the father of $w$. From Theorem \ref{dismantle} we know that $B_1(v)\subseteq B_1(w)$. Since $d(u,v)=d(u,w)+1\geq 2$, we conclude that $u\neq w$ and that $z\in B_1(w)\setminus B_1(v)$.
Hence $B_1(v)$ is a proper subset of $B_1(w).$
\end{proof}

\begin{lemma}[Elementary LC-reduction]
\label{LC}
Let $\bold X$ be a finite weakly systolic complex. Let
$v,w$ be two vertices
such that $B_1(v)$ is a proper subset of $B_1(w)$. Then the full subcomplex ${\bold X}_0$ of $\bf X$
spanned by all vertices of $\bold X$ except $v$ is weakly systolic.
\end{lemma}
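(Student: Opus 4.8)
The plan is to verify that the subcomplex $\mathbf{X}_0$ obtained by deleting the vertex $v$ satisfies one of the characterizations from Theorem \ref{weakly-systolic}. Since weak systolicity is a property of the underlying graph (among flag complexes), and since a full subcomplex of a flag complex is again flag, it suffices to show that $G_0 := G(\mathbf{X}_0)$ is weakly modular and contains no induced $C_4$ (condition (iv)). No induced $C_4$ is immediate: $G_0$ is an induced subgraph of $G(\mathbf{X})$, which has none. So the work is entirely in verifying weak modularity --- the triangle condition (T) and the quadrangle condition (Q) --- for $G_0$, and for this the key structural fact to exploit is that $v$ is \emph{dominated} by $w$ in $G(\mathbf{X})$: every neighbor of $v$ (including $v$) is a neighbor of $w$, and additionally $B_1(v) \subsetneq B_1(w)$.

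First I would record the basic consequences of domination. Because $B_1(v) \subseteq B_1(w)$ and $v \sim w$, the map $\rho$ fixing all vertices of $G_0$ and that would send $v \mapsto w$ exhibits $G_0$ as a retract of $G(\mathbf{X})$; in particular $G_0$ is an isometric subgraph, so distances in $G_0$ agree with distances in $G(\mathbf{X})$ between vertices of $G_0$, and intervals taken in $G_0$ are contained in the corresponding intervals of $G(\mathbf{X})$. This retract/isometry observation is what lets me import the metric structure of $G(\mathbf{X})$. The one subtlety is that an interval $I_{G_0}(x,y)$ might be a proper subset of $I_{G}(x,y) \cap V(G_0)$ only if the deleted vertex $v$ lay on some geodesic; but since $w$ is a neighbor of every neighbor of $v$, any geodesic through $v$ can be rerouted through $w$ without increasing its length, so in fact $d_{G_0} = d_G$ on $V(G_0)$, which is all I need.

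Next I would verify (T) and (Q) directly in $G_0$. Take $u,x,y \in V(G_0)$ witnessing the hypothesis of the triangle condition in $G_0$: $d(x,y)=1 < d(u,x)=d(u,y)=k$ (distances computed in $G$, which is legitimate by the previous paragraph). Since $G(\mathbf{X})$ is weakly modular, there is a common neighbor $t$ of $x,y$ in $G(\mathbf{X})$ with $d(u,t)=k-1$. If $t \neq v$ we are done. If $t = v$, then $x,y \in N(v) \subseteq N(w)$, so $w$ is a common neighbor of $x$ and $y$; moreover $w \neq v$ so $w \in V(G_0)$, and it remains to check $d(u,w) = k-1$. Here I use $B_1(v) \subseteq B_1(w)$ together with weak modularity: $d(u,w) \le d(u,v)+1 = k$, and $d(u,w) \ge d(u,x) - 1 = k-1$; to rule out $d(u,w) = k$ I would argue that $w$, being adjacent to $v$ with $d(u,v)=k-1$, together with the fact that all of $v$'s neighbors are $w$'s neighbors, forces $w$ into the sphere $S_{k-1}(u)$ --- otherwise $v$ would be a vertex of $S_{k-1}(u)$ all of whose neighbors in $I(u,v)$ lie in $S_{k-2}(u) \cup S_{k-1}(u)$, and one checks using thinness of $G(\mathbf{X})$ (Theorem \ref{weakly-systolic}(iii)) and $B_1(v)\subsetneq B_1(w)$ that $w \in S_{k-1}(u)$. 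The quadrangle condition is handled by the same device: the common neighbor $x$ produced by (Q) in $G(\mathbf{X})$ is either already in $G_0$, or equals $v$, in which case $w$ serves as the replacement, with the distance bookkeeping checked exactly as above.

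The main obstacle I anticipate is precisely this distance bookkeeping for the substitute vertex $w$: one must be sure that replacing the deleted common neighbor $v$ by $w$ does not push $w$ one sphere too far from the apex $u$. The hypothesis $B_1(v) \subsetneq B_1(w)$ is doing real work here --- it says $w$ "sees" at least as much as $v$ plus strictly more, and in a weakly modular (in particular thin) graph this monotonicity of balls is enough to pin down $d(u,w)$ relative to $d(u,v)$. I would isolate this as a short preliminary claim: \emph{if $B_1(v) \subseteq B_1(w)$ and $v \sim w$ in a weakly modular graph, then $|d(u,w) - d(u,v)| \le 1$ and in fact $d(u,w) \le d(u,v)$ whenever $v$ has a neighbor in $S_{d(u,v)-1}(u)$, since that neighbor is then also a neighbor of $w$}. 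Once that claim is in hand, both (T) and (Q) for $G_0$ follow routinely, and Theorem \ref{weakly-systolic}(iv) gives that $\mathbf{X}_0$ is weakly systolic.
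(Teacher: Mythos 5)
Your argument is correct, but it takes a different route than the paper. You verify characterization (iv) of Theorem \ref{weakly-systolic} (weakly modular, no induced $C_4$) by observing that $v$ is dominated by $w$, so $G({\bold X}_0)$ is a retract and hence an isometric subgraph, and then repairing the triangle and quadrangle conditions: the witness produced in $G({\bold X})$ is either still present or equals $v$, in which case $w$ substitutes for it, with the distance pinned down by your preliminary claim that $B_1(v)\subseteq B_1(w)$ and $u\neq v$ force $d(u,w)\le d(u,v)$. The paper instead uses characterization (vii): ${\bold X}_0$ is simply connected because removing a dominated vertex is an elementary LC-reduction (the link of $v$ is a cone with apex $w$), no induced $C_4$ is inherited, and the $\widehat{W}_5$-condition survives because the vertex adjacent to all of a given $\widehat{W}_5$ is either still in ${\bold X}_0$ or equals $v$, in which case $w$ serves instead --- no metric bookkeeping at all, which makes that proof shorter; your version is more hands-on but stays purely graph-metric and yields as a byproduct that $G({\bold X}_0)$ is isometric in $G({\bold X})$ (compare Corollary \ref{G_k}). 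Two small repairs to your write-up: domination gives $N(v)\subseteq B_1(w)$, not $N(v)\subseteq N(w)$, so before declaring $w$ a common neighbor of $x$ and $y$ you must note $w\notin\{x,y\}$; this follows at once from your own claim, since $d(u,w)\le d(u,v)=k-1<k=d(u,x)=d(u,y)$ (and $k\ge 2$ in both (T) and (Q), so $u\neq v$ applies). With that ordering of steps, the muddled intermediate sentence invoking thinness is unnecessary --- the claim plus $d(u,w)\ge d(u,x)-1$ already gives $d(u,w)=k-1$.
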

\begin{proof}
It is easy to see that ${\bf X}_0$ is simply connected (see also the discussion in Section \ref{dislc}).
Thus, by condition (vii) of Theorem \ref{weakly-systolic}, it suffices to show that ${\bf X}_0$ does not contain
induced $4$--cycles and satisfies the $\widehat{W}_5$--condition. Since, by Theorem \ref{weakly-systolic},
$\bold X$ does not contain induced $C_4,$ the same is true for its
full subcomplex ${\bf X}_0$. Let $\widehat{W}_5\subseteq {\bf X}_0$ be a given $5$--wheel plus a triangle as defined in Section \ref{char}.
By Theorem \ref{weakly-systolic} there exists a vertex $v'\in {\bold X}$ adjacent in $\bold X$ to all vertices of $\widehat{W}_5$.
If $v'\neq v$ then $v'\in {\bf X}_0$ and if $v'=v$ then
$\widehat{W}_5 \subseteq \mr{lk}(w,{\bf X}_0)$. In both cases
all vertices of $\widehat{W}_5$ are adjacent to a vertex of ${\bf X}_0$: $\widehat{W}_5$ is coned to $v$ in one case and to
$w$ in the other. Thus $\bold X_0$ also
satisfies the $\widehat{W}_5$--condition and hence the lemma follows.
\end{proof}

\begin{theorem}[The fixed point theorem]
\label{fpt}
Let $G$ be a finite group acting by simplicial automorphisms
on a locally finite weakly systolic complex ${\bold X}$. Then there exists
a simplex $\sigma \in {\bold X}$ which is invariant under the action of $G$.
\end{theorem}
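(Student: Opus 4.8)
The plan is to reduce the infinite-group-action problem to a finite one and then induct using the structural lemmas just proved. First I would pass to a suitable finite invariant subcomplex on which $G$ acts, so that Lemmas \ref{str dominat} and \ref{LC} become applicable. The natural candidate is the minimal displacement set: pick any vertex $x_0 \in V(\mathbf{X})$, let $r = \max_{g \in G} d(x_0, g x_0)$, and consider $Y$, the smallest nonempty $G$-invariant full subcomplex. Since $\mathbf{X}$ is locally finite and $G$ is finite, any $G$-orbit is finite, and one can take $\mathbf{Y}$ to be, say, the full subcomplex spanned by $B_R(x_0)$ intersected over all $x_0$ in a $G$-orbit — more robustly, the combinatorial convex hull of a finite $G$-orbit, which is finite, $G$-invariant, and again weakly systolic (convex subcomplexes of weakly systolic complexes are weakly systolic, via Theorem \ref{weakly-systolic}(v) and the fact that convex subgraphs of weakly modular graphs with convex balls inherit these properties). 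So without loss of generality $\mathbf{X}$ is finite.

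Now I would induct on the number of vertices of $\mathbf{X}$. If $\mathbf{X}$ is a single simplex, then that simplex is $G$-invariant and we are done (its vertex set is preserved by any automorphism). Otherwise, by Lemma \ref{str dominat} there exist vertices $v, w$ with $B_1(v) \subsetneq B_1(w)$. The key observation is that the set $D = \{ v \in V(\mathbf{X}) : \exists w \neq v,\ B_1(v) \subsetneq B_1(w) \}$ of \emph{strictly dominated} vertices is invariant under every simplicial automorphism of $\mathbf{X}$, hence $G$-invariant, and it is nonempty by Lemma \ref{str dominat}. Let $\mathbf{X}_0$ be the full subcomplex spanned by $V(\mathbf{X}) \setminus D$. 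Repeatedly applying Lemma \ref{LC} (removing strictly dominated vertices one at a time — one must check that removing one strictly dominated vertex does not destroy the property that the remaining vertices of $D$ are still strictly dominated, or more simply, reapply Lemma \ref{str dominat} at each stage to the current $G$-invariant complex), we see $\mathbf{X}_0$ is a weakly systolic complex with strictly fewer vertices, and it is $G$-invariant since $D$ is. By induction, $G$ fixes a simplex of $\mathbf{X}_0 \subseteq \mathbf{X}$, and we are done.

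The main obstacle I anticipate is the bookkeeping around iterated removal: Lemma \ref{LC} removes a \emph{single} strictly dominated vertex and guarantees the result is weakly systolic, but to reach a proper $G$-invariant subcomplex I need to remove a whole $G$-orbit of dominated vertices (or all of $D$) while staying inside the hypotheses of Lemma \ref{LC} at each step. This requires knowing that after deleting one strictly dominated vertex $v$, the complex $\mathbf{X}_0$ still contains a strictly dominated vertex whenever it is not a simplex — which follows again from Lemma \ref{str dominat} applied to $\mathbf{X}_0$ — and that one can choose the order of deletions $G$-equivariantly, e.g. by always deleting, in an arbitrary but fixed order, a vertex of the current set of strictly dominated vertices and its entire $G$-orbit in turn. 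Since each deletion strictly decreases the (finite) vertex count, the process terminates; the terminal complex is either a simplex or has no strictly dominated vertex, and the latter cannot happen for a non-simplex by Lemma \ref{str dominat}. So in fact the process terminates at a $G$-invariant simplex, which completes the proof. A secondary routine point is verifying that the convex hull of a finite orbit is finite and weakly systolic; this uses local finiteness of $\mathbf{X}$ together with the characterization in Theorem \ref{weakly-systolic} and standard facts about convexity in weakly modular graphs.
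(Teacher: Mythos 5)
Your overall strategy is the paper's: reduce to a finite complex by taking the full subcomplex spanned by the convex hull of a $G$-orbit (bounded because balls are convex, hence finite by local finiteness, and weakly systolic as a convex subcomplex), and then use Lemmas \ref{str dominat} and \ref{LC} to delete a canonically defined, hence $G$-invariant, set of strictly dominated vertices; your induction on the vertex count is just a reformulation of the paper's passage to a minimal $G$-invariant weakly systolic subcomplex and the ensuing contradiction. So the skeleton is sound and matches the paper.

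The one genuine weak point is exactly the bookkeeping you flag, and neither of your two proposed fixes works as stated. First, it is \emph{not} true in general that after deleting one strictly dominated vertex the remaining vertices of $D$ stay \emph{strictly} dominated: if $B_1(x)\subsetneq B_1(y)$ and the unique witness $z\in B_1(y)\setminus B_1(x)$ is itself a deleted vertex, then in the smaller complex $x$ is only dominated, not strictly, and Lemma \ref{LC} as stated no longer applies; worse, all strict dominators of $x$ could lie in $D$. Second, your equivariant deletion scheme does not repair this: while an orbit is being deleted one vertex at a time the intermediate complexes are not $G$-invariant, so ``reapply Lemma \ref{str dominat} to the current $G$-invariant complex'' is unavailable precisely at those steps, and the remaining orbit members need not be strictly dominated there. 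The paper sidesteps this by deleting not all of $D$ but only the set $S$ of strictly dominated vertices that do not themselves strictly dominate any vertex: then every $x\in S$ has a strict dominator $w\notin S$, i.e.\ a dominating vertex that survives the deletion, and the argument of Lemma \ref{LC} (which really only uses $B_1(x)\subseteq B_1(w)$ for a surviving $w$, both for the fold giving simple connectivity and for relocating the $\widehat{W}_5$-apex) applies to the whole set $S$ at once; $S$ is canonically defined, hence $G$-invariant, nonempty, and its complement is nonempty. Your choice of $D$ can be salvaged the same way, by noting that every $x\in D$ is dominated by some $\prec$-maximal vertex $y\notin D$ and that this (non-strict) domination persists under further deletions, but that observation—rather than preservation of strictness or an equivariant deletion order—is the missing ingredient.
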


\begin{proof}
Let ${\bold X}'$ be the subcomplex of $\bold X$ spanned by the convex hull of the set $Gz=\{ gz:\; \; g\in G\}$, for an arbitrary vertex $z$. Since $Gz$ is finite and, by Theorem \ref{weakly-systolic}(v), balls in $\bf X$ are convex, $\bf X'$ is a bounded full subcomplex of $\bf X$. Since $\bf X$ is locally finite, $\bf X'$ is finite.
Moreover, as a convex subcomplex of a weakly systolic complex,  ${\bold X}'$ is itself weakly systolic. Clearly ${\bold X}'$ is also $G$--invariant.
Thus there exists a minimal finite non-empty $G$--invariant
subcomplex ${\bold X}_0$ of $\bold X$, that is itself weakly systolic.
We assert that ${\bold X}_0$ must be a single simplex.

Assume by way of contradiction that ${\bold X}_0$ is not a simplex. Then, by Lemma \ref{str dominat}, ${\bf X}_0$ contains two vertices
$v,w$ such that $B_1(v)\subsetneq B_1(w)$ (i.e., $v$ is a strictly dominated vertex). Since the strict inclusion of $1$--balls
is a transitive relation and ${\bold X}_0$ is finite, there exists a finite set $S$ of strictly dominated vertices of $\bold X_0$
with the following property: for a vertex $x\in S$
there is no vertex $y$ with $B_1(y)\subsetneq B_1(x)$. Let ${\bold X}_0'$ be the full subcomplex of $\bf X$ spanned
by $V({\bold X}_0)\setminus S$. It is clear that ${\bold X}_0'$ is a
non-empty $G$--invariant proper subcomplex of ${\bold X}_0$. By Lemma \ref{LC}, ${\bold X}_0'$ is weakly systolic. This contradicts
the minimality of ${\bold X}_0$ and thus shows that ${\bold X}_0$ has to be a simplex.
\end{proof}

\begin{corollary}[Conjugacy classes of finite subgroups]
\label{conj}
Let $G$ be a group acting geometrically by automorphisms on a weakly systolic complex $\bf X$ (i.e. $G$ is weakly systolic).
Then $G$ contains only finitely many conjugacy classes of finite subgroups.
\end{corollary}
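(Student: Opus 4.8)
The plan is to deduce the corollary from the fixed point theorem (Theorem~\ref{fpt}) by the standard counting argument based on cocompactness. Note first that a geometric action has $\bf X$ locally finite (this is part of the standing hypotheses on geometric actions, and in any case follows from cocompactness together with the finiteness of simplex stabilizers: for a vertex $v$ there are only finitely many $G$-orbits of simplices through $v$, and for each orbit representative $\tau$ the simplices $g\tau$ through $v$ are determined by the images of the finitely many vertices of $\tau$, of which only finitely many can be sent to $v$ since vertex stabilizers are finite). Hence Theorem~\ref{fpt} is available for every finite subgroup of $G$. By cocompactness, fix a finite set of simplices $\sigma_1,\dots,\sigma_k$ of $\bf X$ meeting every $G$-orbit of simplices. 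By proper discontinuity each setwise stabilizer $G_i:=\mr{Stab}_G(\sigma_i)$ is finite, so the collection $\mathcal F$ of all subgroups of $G_1,\dots,G_k$ is a finite set of (finite) subgroups of $G$.

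Now let $H\leq G$ be an arbitrary finite subgroup. Applying Theorem~\ref{fpt} to the restricted action of $H$ on $\bf X$ yields a simplex $\sigma$ with $H\sigma=\sigma$. Choosing $g\in G$ and an index $i$ with $\sigma=g\sigma_i$, we obtain $g^{-1}Hg\,\sigma_i=\sigma_i$, i.e.\ $g^{-1}Hg\leq G_i$, and therefore $g^{-1}Hg\in\mathcal F$. Consequently every finite subgroup of $G$ is conjugate to a member of the finite family $\mathcal F$, so $G$ has only finitely many conjugacy classes of finite subgroups.

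I do not anticipate a genuine obstacle here: this is the simplicial analogue of the $CAT(0)$ statement \cite[Corollary~2.8]{BrHa}, and the only points that need care are verifying that Theorem~\ref{fpt} genuinely applies (hence the remark on local finiteness) and conjugating $H$ \emph{into} the stabilizer of a fixed orbit representative rather than in the opposite direction.
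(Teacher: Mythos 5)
Your proof is correct and is essentially the paper's argument: both apply Theorem~\ref{fpt} to each finite subgroup and then use cocompactness and properness of the action, the only difference being that the paper phrases this as a contradiction (infinitely many pairwise non-conjugate finite subgroups would yield infinitely many group elements $g$ with $g(B_1(K))\cap B_1(K)\neq\emptyset$ for a compact $K$ meeting every orbit), whereas you argue directly by conjugating each finite subgroup into one of the finitely many finite stabilizers of orbit representatives. Your preliminary remark on local finiteness is consistent with the paper, whose definition of a properly discontinuous (hence geometric) action already presupposes that ${\bf X}$ is locally finite, so Theorem~\ref{fpt} indeed applies.
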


\begin{proof}
Suppose by way of contradiction that we have infinitely many conjugacy classes of finite subgroups represented by $H_1,H_2,\ldots\subseteq G$.
Since $G$ acts geometrically on ${\bf X},$ there exists a compact subset $K\subseteq V({\bf X})$ with $\bigcup_{g\in G} gK={\bf X}$. For $i=1,2,\ldots,$
let $\sigma_i$ be an $H_i$--invariant simplex of $\bf X$ (whose existence is assured by the fixed point Theorem \ref{fpt}) and let $g_i\in G$ be such
that $g_i(\sigma_i)\cap K\neq \emptyset$.
Then $g_i(\sigma_i)$ is $g_iH_ig_i^{-1}$ invariant and
$\bigcup_i g_iH_ig_i^{-1}$ is infinite. But for every element $g\in \bigcup_i g_iH_ig_i^{-1}$ we have $g(B_1(K))\cap B_1(K)\neq \emptyset,$ a
contradiction with the properness of the $G$--action on $\bf X$.
\end{proof}

\section{Contractibility of the fixed point set}
\label{contrfps}

The aim of this section is to prove that for a group acting on a
weakly systolic complex its fixed point set is
contractible (Proposition
\ref{inv set contr}). As explained in the Introduction, this result
implies Theorem E asserting that weakly systolic complexes are models
for $\eg$ for groups acting on them properly.

Our proof closely follows Przytycki's proof of an analogous result for
the case of systolic complexes \cite{Pr3}. There are however minor
technical difficulties. In particular, since balls around simplices in weakly systolic
complexes need not to be convex, we have to work with other convex objects that are
defined as follows. For a simplex  $\sigma$  of a simplicial complex $\bf X,$ set
$K_0(\sigma)=\sigma$ and $K_i(\sigma)=\bigcap _{v\in \sigma} B_i(v)$ for $i=1,2,\ldots$.

\begin{lemma}[Properties of $K_i(\sigma)$]
\label{bint}
 Let $\sigma$ be a simplex of a weakly systolic complex $\bf X$.
Then, for $i=0,1,2,...$, $K_i(\sigma)$ is convex and
$K_{i+1}(\sigma)\subseteq B_1(K_i(\sigma))$.
\end{lemma}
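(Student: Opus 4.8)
The plan is to prove the two claims — convexity of $K_i(\sigma)$ and the containment $K_{i+1}(\sigma)\subseteq B_1(K_i(\sigma))$ — by a simultaneous induction on $i$. The base case $i=0$ is trivial: $K_0(\sigma)=\sigma$ is a simplex, hence convex, and $K_1(\sigma)=\bigcap_{v\in\sigma}B_1(v)$ is exactly the set of vertices adjacent to or equal to every vertex of $\sigma$, each of which lies in $B_1(v)$ for any fixed $v\in\sigma$; so $K_1(\sigma)\subseteq B_1(\sigma)=B_1(K_0(\sigma))$. For the inductive step on convexity, I would argue that $K_i(\sigma)$, being a finite intersection of the balls $B_i(v)$, is convex as soon as each $B_i(v)$ is convex and the intersection is nonempty and connected; by Lemma \ref{bbac} the balls $B_i(v)$ are convex for $i\ge 2$ (and for $i\le 1$ one checks directly), so it remains to see the intersection is connected. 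Here I would use weak modularity: in a weakly modular graph an intersection of convex sets, if nonempty, induces a connected (indeed "gated-like") subgraph, and then local convexity together with the criterion of \cite{Ch_triangle} upgrades local convexity to convexity. Alternatively one may invoke directly that in weakly modular graphs the intersection of convex sets is convex, which is standard.

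The real content is the containment $K_{i+1}(\sigma)\subseteq B_1(K_i(\sigma))$. Take a vertex $x\in K_{i+1}(\sigma)$, so $d(x,v)\le i+1$ for every $v\in\sigma$. If $d(x,v)\le i$ for all $v$ then $x\in K_i(\sigma)$ and we are done, so assume the set $\tau$ of vertices $v\in\sigma$ with $d(x,v)=i+1$ is nonempty. The goal is to produce a single vertex $y\in K_i(\sigma)$ adjacent to (or equal to) $x$, i.e. a vertex $y\sim x$ with $d(y,v)\le i$ for every $v\in\sigma$. For the vertices $v\in\sigma\setminus\tau$ this will be automatic, since $d(x,v)\le i$ already forces $d(y,v)\le i+1$, but we will need to be careful; the point is to descend simultaneously toward all of $\tau$. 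Since $\tau$ is a simplex lying in the sphere $S_{i+1}(x)$, the $SD_n$ property (equivalently, Lemma \ref{edge_desc} applied to edges of $\tau$, or the simplex version) gives a nonempty simplex $\tau_0=V(\mathrm{lk}(\tau,\mathbf{X}))\cap B_i(x)$, i.e. a set of vertices, pairwise adjacent and each adjacent to all of $\tau$, at distance exactly $i$ from $x$. I would then pick $y\in\tau_0$; by construction $y\sim x$ and $d(y,v)=i$ for $v\in\tau$. It remains to check $d(y,v)\le i$ for $v\in\sigma\setminus\tau$, and this is where convexity of $K_i(\sigma)$ — already available by the inductive step — combined with the structure of the $\sigma^*(v)$-type metric projections (as in the proof of Proposition \ref{SD_n-max}) should be used: the projections of $x$ onto the $B_i(v)$'s are linearly ordered, so one can choose $y$ in $\tau_0$ lying in the relevant intersection.

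The main obstacle, and the step I would spend the most care on, is exactly this last point: guaranteeing that the vertex $y$ obtained by descending toward the "far" part $\tau$ of $\sigma$ does not drift away from the "near" vertices $\sigma\setminus\tau$, so that $y$ genuinely lands in $K_i(\sigma)=\bigcap_{v\in\sigma}B_i(v)$ rather than merely in $\bigcap_{v\in\tau}B_i(v)$. The tool to handle this is the linear-order-of-projections argument from Proposition \ref{SD_n-max} (convexity of the balls $B_{i+1}(v)$ forces the metric projections $\sigma^*(v)$ to be nested), adapted here to the projections of $x$; together with thinness of $G(\mathbf{X})$ this should pin down a choice of $y$ that works for all of $\sigma$ at once. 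Once that is in place, both assertions of the lemma follow by the joint induction.
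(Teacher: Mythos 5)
There is a genuine gap in the main containment $K_{i+1}(\sigma)\subseteq B_1(K_i(\sigma))$, and it sits exactly at the step you yourself identify as the crux. Applying the $SD_n(x)$ property to the simplex $\tau\subseteq S_{i+1}(x)$ produces $\tau_0=V(\mathrm{lk}(\tau,{\bf X}))\cap B_i(x)$, whose vertices are adjacent to all of $\tau$ and lie at distance exactly $i$ from $x$ --- precisely as you describe them. But then your next sentence, ``by construction $y\sim x$ and $d(y,v)=i$ for $v\in\tau$'', swaps the two distances: for $i\ge 2$ a vertex $y\in\tau_0$ satisfies $d(x,y)=i$ and $d(y,v)=1$, so it is not a neighbour of $x$ and cannot witness $x\in B_1(K_i(\sigma))$. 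Even the natural repair --- take the neighbour $x'$ of $x$ on a geodesic from $x$ to some $y\in\tau_0$ --- only controls the distances to $\tau$; the vertices $v\in\sigma\setminus\tau$ with $d(x,v)=i$ remain the problem, and your proposed resolution (``choose $y$ in $\tau_0$ lying in the relevant intersection'' via nested projections) is not an argument, and cannot become one while the chosen vertex is sought inside $\tau_0$ rather than among the neighbours of $x$. As a smaller point, the convexity half needs none of the machinery you invoke: an intersection of convex sets contains the interval between any two of its vertices, hence is convex, and balls around \emph{vertices} are convex for every radius by Theorem \ref{weakly-systolic}(v); Lemma \ref{bbac} (balls around simplices, $i\ge2$) is not the relevant citation.

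The paper's proof descends toward the \emph{nearest} part of $\sigma$, not the farthest, and this is what makes the near vertices come along for free. With $l+1=d(x,\sigma)$ and $\sigma_0$ the metric projection of $x$ in $\sigma$, the property $SD_l(x)$ applied to $\sigma_0\subseteq S_{l+1}(x)$ yields a vertex $z\in S_l(x)$ adjacent to all of $\sigma_0$. Every $v\in\sigma\setminus\sigma_0$ satisfies $d(x,v)=l+2$ and $d(z,v)=2$ (through a vertex of $\sigma_0$), so $z\in I(x,v)$ for \emph{every} $v\in\sigma$; consequently the neighbour $w'$ of $x$ in $I(x,z)$ satisfies $d(w',v)=d(x,v)-1\le i$ for all $v\in\sigma$, i.e.\ $w'\in K_i(\sigma)$ and $x\in B_1(w')\subseteq B_1(K_i(\sigma))$. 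This single application of $SD$ achieves the simultaneous descent you were aiming for, with no case split between near and far vertices of $\sigma$ and no appeal to the nested-projection argument of Proposition \ref{SD_n-max}. To fix your write-up you would need to replace the descent toward $\tau$ by this descent toward the metric projection (or supply a genuinely new argument for the vertices at distance exactly $i$).
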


\begin{proof} Trivially, $K_0(\sigma)=\sigma$ is convex. For $i>0,$ $K_i(\sigma)$ is
the intersection of the balls $B_i(v), v\in \sigma.$ By Theorem \ref{weakly-systolic},
balls around vertices are convex, whence  $K_i(\sigma)$ is convex as well.
To establish the inclusion $K_{i+1}(\sigma)\subseteq B_1(K_i(\sigma)),$ pick any vertex $w\in K_{i+1}(\sigma).$
Let $l=d(w,\sigma)-1$ and
denote by $\sigma_0$ the metric projection of $w$ in $\sigma$.  By the property $SD_{l}(w),$ there exists a vertex $z\in S_{l}(w)$
adjacent to all vertices of the simplex $\sigma_0.$ Let $w'$ be a neighbor of $w$ in the interval $I(w,z).$ Then obviously $d(w',\sigma)=l$
and therefore $\sigma_0$ is the metric projection of $w'$ in $\sigma.$ Since $d(w',v)=d(w,v)-1$ for any vertex $v\in \sigma$ and $w\in K_{i+1}(\sigma),$
we conclude that $w'\in K_i(\sigma),$ whence $w\in B_1(w')\subseteq B_1(K_i(\sigma)).$
\end{proof}

We recall now two general results that were proved in
\cite{Pr3} and which will be important in the proof of Proposition
\ref{inv set contr}.

\begin{proposition}[{\cite[Proposition 4.1]{Pr3}}]
\label{p4.1p3}
If $\cal C, \cal D$ are posets and $F_0,F_1\colon \cal C \to \cal D$
are functors such that for each object $c$ of $\cal C$ we have
$F_0(c) \leq F_1(c)$, then the maps induced by $F_0$, $F_1$ on the
geometric realizations of $\cal C,\cal D$ are homotopic. Moreover
this homotopy can be chosen to be constant on the geometric
realization of the subposet of $\cal C$ of objects on which $F_0$ and
$F_1$ agree.
\end{proposition}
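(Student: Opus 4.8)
The plan is to encode the pointwise inequality $F_0\le F_1$ as a single order-preserving map and then use the classical identification of the order complex of a product of posets with the product of their geometric realizations; here I write $\Delta(P)$ for the order complex of a poset $P$ (simplices $=$ finite chains) and $|P|=|\Delta(P)|$. Let $[1]$ denote the two-element chain $\{0<1\}$ and define $H\colon \mathcal{C}\times[1]\to\mathcal{D}$ by $H(c,0)=F_0(c)$ and $H(c,1)=F_1(c)$. First I would verify that $H$ is order-preserving: a comparable pair of $\mathcal{C}\times[1]$ has the form $(c,\varepsilon)\le(c',\delta)$ with $c\le c'$ and $\varepsilon\le\delta$, and when $\varepsilon=\delta$ the inequality $H(c,\varepsilon)\le H(c',\varepsilon)$ is immediate since $F_\varepsilon$ is order-preserving, while in the remaining case $(c,0)\le(c',1)$ we get $H(c,0)=F_0(c)\le F_0(c')\le F_1(c')=H(c',1)$, the first step by monotonicity of $F_0$ and the second by the hypothesis. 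Since an order-preserving map sends chains to chains (possibly collapsing them), $H$ induces a simplicial map $\Delta(\mathcal{C}\times[1])\to\Delta(\mathcal{D})$ and hence a continuous map $|H|$ on realizations.

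The geometric heart of the argument is the natural homeomorphism $|\Delta(P\times Q)|\cong|P|\times|Q|$, valid for arbitrary posets, coming from the canonical ``staircase'' triangulation of a product of geometric simplices by the order complex of a product of chains, glued compatibly over all faces. Specialising to $Q=[1]$, whose order complex realises to the segment $[0,1]$ with vertices $0,1$ at the endpoints, one obtains $|\Delta(\mathcal{C}\times[1])|\cong|\mathcal{C}|\times[0,1]$, under which the two copies $\mathcal{C}\times\{0\},\mathcal{C}\times\{1\}$ of $\mathcal{C}$ correspond to $|\mathcal{C}|\times\{0\}$ and $|\mathcal{C}|\times\{1\}$. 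Transporting $|H|$ along this homeomorphism yields a map $|\mathcal{C}|\times[0,1]\to|\mathcal{D}|$ which restricts to $|F_0|$ over $0$ and to $|F_1|$ over $1$; that is exactly a homotopy $|F_0|\simeq|F_1|$.

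For the last assertion, let $\mathcal{C}_0\subseteq\mathcal{C}$ be the subposet of objects on which $F_0$ and $F_1$ agree. On $\mathcal{C}_0\times[1]$ the map $H$ does not depend on the $[1]$-coordinate, so its restriction there factors as the projection $\mathcal{C}_0\times[1]\to\mathcal{C}_0$ followed by $F_0$. By naturality of the homeomorphism above (in the variable $P$), the restriction of our homotopy to $|\mathcal{C}_0|\times[0,1]$ likewise factors through the projection to $|\mathcal{C}_0|$, hence is constant in the $[0,1]$-coordinate. Thus the homotopy is stationary on $|\Delta(\mathcal{C}_0)|$, as required.

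I expect the only point needing genuine care to be the product--triangulation homeomorphism $|\Delta(P\times Q)|\cong|P|\times|Q|$ and its naturality (and, if the posets are infinite, the mild point that the product should be formed in compactly generated spaces); this is standard poset topology, and the rest is the order-preservation check plus bookkeeping. One may alternatively phrase the whole argument categorically: $F_0\le F_1$ pointwise is precisely a natural transformation of functors $\mathcal{C}\to\mathcal{D}$ between the posets regarded as categories, and a natural transformation induces a homotopy between the induced maps on nerves, constant wherever the transformation is the identity; the map $H$ above is just an explicit model for that homotopy.
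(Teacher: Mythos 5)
Your argument is correct. Note that the paper does not prove this proposition at all: it is quoted verbatim from Przytycki \cite{Pr3} (Proposition 4.1 there), so there is no in-paper proof to compare with. Your proof is the standard one for this fact -- interpreting the pointwise inequality $F_0\le F_1$ as an order-preserving map $H\colon \mathcal{C}\times\{0<1\}\to\mathcal{D}$ (equivalently, a natural transformation between the functors), invoking the product triangulation $|\Delta(P\times Q)|\cong |P|\times|Q|$ with the mild compactly-generated caveat (harmless here since the second factor is the compact interval), and observing that on the subposet where $F_0=F_1$ the map $H$ factors through the projection, so the homotopy is stationary there -- and it matches the argument behind the cited result.
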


\begin{proposition}[{\cite[Proposition 4.2]{Pr3}}]
\label{p4.2p3}
Let $F_0\colon \cal C' \to \cal C$ be the functor from the flag
poset $\cal C'$ of a poset $\cal C$ into the poset $\cal C$,
assigning to each object of $\cal C'$, which is a chain of objects of
$\cal C$, its minimal element. Then the map induced by $F_0$ on
geometric realizations of $\cal C',\cal C$ (that are homeomorphic in
a canonical way) is homotopic to identity.
\end{proposition}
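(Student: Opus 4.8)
The plan is to recognize the statement as the classical fact that the canonical ``vertex map'' out of a barycentric subdivision is homotopic to the subdivision homeomorphism, and to prove it by a straight-line homotopy carried out one simplex at a time.

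First I would unwind the definitions. The flag poset $\cal C'$ has as its objects the nonempty finite chains of $\cal C$, and its geometric realization $|\cal C'|$ is canonically the barycentric subdivision of $|\cal C|$; the canonical homeomorphism $\kappa\colon|\cal C'|\to|\cal C|$ sends the vertex of $|\cal C'|$ corresponding to a chain $\sigma=(c_0<\cdots<c_k)$ to the barycenter $\widehat\sigma$ of the simplex $\{c_0,\dots,c_k\}$ of $|\cal C|$, and is affine on each simplex of $|\cal C'|$. Whatever convention one adopts for the partial order on $\cal C'$, the assignment $\sigma\mapsto\min\sigma$ induces a simplicial map $|F_0|\colon|\cal C'|\to|\cal C|$ carrying the vertex $\sigma$ to the vertex $\min\sigma$ of $|\cal C|$. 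The goal is to prove $|F_0|\simeq\kappa$.

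The main step is the observation that $\kappa$ and $|F_0|$ are ``carried by the same simplices'' of $|\cal C|$. A point $x\in|\cal C'|$ lies in the closed simplex of $|\cal C'|$ determined by a flag of chains $\sigma_0\subsetneq\sigma_1\subsetneq\cdots\subsetneq\sigma_m$ (unique if $x$ is taken in the corresponding open simplex). For each $i$ we have $\sigma_i\subseteq\sigma_m$ as finite subsets of $\cal C$, hence $|\sigma_i|$ is a face of the simplex $|\sigma_m|$ of $|\cal C|$; in particular $\kappa(\sigma_i)=\widehat{\sigma_i}\in|\sigma_i|\subseteq|\sigma_m|$ and $|F_0|(\sigma_i)=\min\sigma_i\in|\sigma_i|\subseteq|\sigma_m|$. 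Since $\kappa$ and $|F_0|$ are affine on the closed simplex $|\{\sigma_0,\dots,\sigma_m\}|$ and $|\sigma_m|$ is convex, both $\kappa(x)$ and $|F_0|(x)$ lie in $|\sigma_m|$, and so does the whole segment joining them. Therefore $H(x,t)=(1-t)\,\kappa(x)+t\,|F_0|(x)$ is a well-defined continuous map $|\cal C'|\times[0,1]\to|\cal C|$ (the relevant simplex $|\sigma_m|$ depends only on $x$), and it is a homotopy from $\kappa$ to $|F_0|$, which proves the proposition. Equivalently, the same computation shows that $F_0$ is a simplicial approximation of $\kappa$ — namely $\kappa$ maps the open star of a vertex $\sigma$ of $|\cal C'|$ into the open star of $\min\sigma$ in $|\cal C|$ — so the conclusion also follows from the simplicial approximation theorem.

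I do not foresee any deeper difficulty, since the content of the proposition is entirely the classical statement that the vertex map of a barycentric subdivision is a homotopy inverse to the subdivision homeomorphism. The one point that requires care is the well-definedness of the straight-line homotopy, that is, checking that the segment from $\kappa(x)$ to $|F_0|(x)$ never leaves $|\cal C|$; this reduces exactly to the elementary fact that for a nested flag $\sigma_0\subsetneq\cdots\subsetneq\sigma_m$ of chains of $\cal C$, both the barycenters $\widehat{\sigma_i}$ and the vertices $\min\sigma_i$ lie in the single top simplex $|\sigma_m|$ of $|\cal C|$, which is convex.
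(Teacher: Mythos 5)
Your argument is correct: identifying $|\mathcal{C}'|$ with the barycentric subdivision of $|\mathcal{C}|$ and checking that $\kappa$ and $|F_0|$ map each closed simplex of the subdivision into the corresponding top simplex $|\sigma_m|$ of $|\mathcal{C}|$ makes the straight-line homotopy well defined, which is exactly the standard carrier/simplicial-approximation proof. The paper does not prove this statement itself but imports it from Przytycki's work \cite{Pr3}, where the argument is essentially the one you give, so there is nothing to add.
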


The following property of flag complexes  will be crucial in the definition of expansion by projection below.
It says that in weakly systolic case we can define projections on convex subcomplexes the same way as projections on balls.

\begin{lemma}[Projections on convexes]
\label{proj}
Let ${\bf X}$ be a simplicial flag complex and let $Y$ be its convex subset. If a simplex $\sigma$ belongs to $S_1(Y),$ i.e.
$\sigma \subseteq B_1(Y)$ and $\sigma \cap Y=\emptyset,$ then $\tau:=\mr{lk}(\sigma, {\bf X})\cap Y$ is a single simplex.
\end{lemma}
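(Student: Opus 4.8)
The plan is to show that $\tau := \mathrm{lk}(\sigma,\mathbf{X}) \cap Y$ is nonempty and that its vertices are pairwise adjacent; since $\mathbf{X}$ is a flag complex, this will immediately give that $\tau$ spans a simplex. Let me write $\sigma = \{s_1,\dots,s_k\}$. Since $\sigma \subseteq B_1(Y)$, each $s_j$ has a neighbor in $Y$; the subtle point is that we need a \emph{single} vertex of $Y$ adjacent to all of $\sigma$ simultaneously, and that the set of all such vertices forms a clique.

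First I would establish nonemptiness. Pick $y_1 \in Y$ adjacent to $s_1$. If $y_1$ is adjacent to all of $\sigma$ we are done; otherwise proceed by the standard ``descent toward the simplex'' argument. The cleanest route is to mimic the proof of the implication (v)$\Rightarrow$(i) in Theorem \ref{weakly-systolic}: among vertices of $Y$, choose a vertex $y$ adjacent to the maximum number of vertices of $\sigma$. Since $\sigma \subseteq S_1(Y)$ and the graph is weakly modular without induced $C_4$ (Theorem \ref{weakly-systolic}(iv)), $y$ must be adjacent to at least two vertices of $\sigma$ — here I would use the triangle condition applied to a vertex of $\sigma$ not adjacent to $y$ together with an adjacent one, and then use convexity of $Y$ to pull the resulting common neighbor back into $Y$ (the neighbor lies on a geodesic between two vertices of $Y$, hence lies in $Y$ by convexity). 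Then, exactly as in the cited proof, if $y$ fails to be adjacent to some $v \in \sigma$, pick a neighbor $w \in \sigma$ of $y$, use the triangle condition to get a common neighbor $y'$ of $v$ and $w$, note $y'$ again lies in $Y$ by convexity since it is metrically between two vertices of $Y$, and then the absence of induced $C_4$ forces $y'$ to be adjacent to every vertex of $\sigma$ that $y$ is adjacent to, contradicting maximality. Hence $y$ is adjacent to all of $\sigma$, so $y \in \tau$ and $\tau \neq \emptyset$.

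Next I would show $\tau$ is a clique. Let $y, y' \in \tau$ be distinct; both are in $Y$ and both are adjacent to every vertex of $\sigma$. If $y$ and $y'$ were nonadjacent, then for any vertex $v \in \sigma$ we have $v$ adjacent to both $y$ and $y'$, so $v \in I(y,y')$ (distance $2$); by thinness of $G(\mathbf{X})$ (Theorem \ref{weakly-systolic}(iii)), any two neighbors of $y'$ in the interval $I(y,y')$ — and here $y'$ and the vertices of $\sigma$ are such neighbors — must be pairwise adjacent, which they are, so that alone does not yet contradict. Instead I would argue: $y,y' \in Y$ and $Y$ is convex, hence $I(y,y') \subseteq Y$; but then any common neighbor $z$ of $y$ and $y'$ lies in $Y$. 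Now $\sigma \cap Y = \emptyset$ while every $v \in \sigma$ is a common neighbor of $y,y'$, giving $v \in I(y,y') \subseteq Y$, contradicting $\sigma \cap Y = \emptyset$. Therefore $y$ and $y'$ are adjacent, so $\tau$ is a clique, hence a simplex.

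The main obstacle I anticipate is the nonemptiness step: making sure that every common neighbor produced by the triangle condition actually lands back in $Y$. This is exactly where convexity of $Y$ is essential — each such neighbor is obtained as an intermediate vertex on a geodesic whose endpoints already lie in $Y$ (or as a vertex whose distances to two $Y$-vertices place it in their interval), so convexity of $Y$ closes the argument. Once that is set up, the rest follows the template of Theorem \ref{weakly-systolic}. One should also double-check the degenerate cases where $\sigma$ is a single vertex, where $\tau$ is forced to be a single vertex directly, and where $|\tau| = 1$, which are immediate.
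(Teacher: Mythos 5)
Your second step is exactly the paper's proof, and it is the entire content of the lemma: if two vertices $y,y'\in\tau$ were non-adjacent, every vertex $v$ of $\sigma$ would be a common neighbour of $y$ and $y'$, hence $v\in I(y,y')\subseteq Y$ by convexity of $Y$, contradicting $\sigma\cap Y=\emptyset$; flagness of ${\bf X}$ then turns the clique $\tau$ into a simplex. (Your detour through thinness is unnecessary, as you yourself note.)

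The first step of your plan, however, overreaches. The lemma is stated for an \emph{arbitrary} flag complex ${\bf X}$ with a convex subset $Y$, and under these hypotheses non-emptiness of $\tau$ is simply false, so no argument can establish it: let ${\bf X}$ be the flag complex of the $4$-cycle with vertices $y_1,a,b,y_2$ and edges $y_1a$, $ab$, $by_2$, $y_2y_1$, put $Y=\{y_1,y_2\}$ (convex, since $y_1y_2$ is an edge) and $\sigma=\{a,b\}$; then $\sigma\subseteq S_1(Y)$ and $\sigma\cap Y=\emptyset$, but no vertex of $Y$ is adjacent to both $a$ and $b$. Accordingly the paper's proof does not address non-emptiness at all; the point of ``single simplex'' is that $\mathrm{lk}(\sigma,{\bf X})\cap Y$ forms one clique rather than containing non-adjacent vertices. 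Your non-emptiness argument invokes Theorem \ref{weakly-systolic} (weak modularity, absence of induced $C_4$), which is not among the hypotheses of Lemma \ref{proj}; and even in the weakly systolic setting, where the statement is true, your sketch leaves the crucial points open: you never specify with respect to which basepoint the triangle condition is applied, nor between which two vertices of $Y$ the produced common neighbour is supposed to lie before convexity can pull it into $Y$. The model argument (v)$\Rightarrow$(i) of Theorem \ref{weakly-systolic} works because the convex set there is a ball $B_i(u)$ and the centre $u$ serves as basepoint; for a general convex $Y$ one must instead take as basepoint a suitable vertex of $Y$ (e.g.\ a $Y$-neighbour of a vertex of $\sigma$) and check that the common neighbours obtained land in intervals between two $Y$-vertices, and this transcription is exactly what is missing from your plan. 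So keep the clique argument, which coincides with the paper's, and drop the non-emptiness step: it proves a claim that is not part of the lemma and is false at the lemma's level of generality.
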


\begin{proof} By definition of links, $\tau$ consists of all vertices $v$ of $Y$ adjacent in $G({\bf X})$
to all vertices of $\sigma$. Since the set $Y$ is convex and $\sigma$ is disjoint from $Y,$ necessarily the vertices of $\tau$  are
pairwise adjacent. As $\bf X$ is a flag complex, $\tau$ is a simplex of $\bf X$.
\end{proof}

We will call the simplex $\tau$ as in the lemma above the \emph{projection of $\sigma$ on $Y$}. Now we are in position to define the
following notion introduced (in a more general version) by Przytycki \cite[Definition 3.1]{Pr3} in the systolic case. Let $Y$ be a convex
subset of a weakly systolic complex $\bf X$ and let
$\sigma$ be a simplex in $B_1(Y)$. The \emph{expansion by projection} $e_Y(\sigma)$
of $\sigma$ is a simplex in $B_1(Y)$ defined in the following way: if $\sigma \subseteq Y,$ then
$e_Y(\sigma)=\sigma,$ otherwise $e_Y(\sigma)$ is the join of $\sigma
\cap S_1(Y)$ and its projection on $Y$. A version of the following simple lemma was proved in \cite{Pr3} in the systolic case.
Its proof given there is valid also in our case.

\begin{lemma}[{\cite[Lemma 3.8]{Pr3}}]
\label{L3.8p3}
Let $Y$ be a convex subset of a weakly systolic complex $\bf X$ and
let $\sigma_1\subseteq \sigma_2\subseteq\ldots \subseteq
\sigma_n\subseteq B_1(Y)$ be an increasing sequence of simplices.
Then the intersection $\left( \bigcap_{i=1}^{n}e_Y(\sigma_i)\right) \cap Y$
is nonempty.
\end{lemma}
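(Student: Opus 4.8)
The plan is to reduce the statement to a short bookkeeping argument about how the expansion by projection interacts with the threshold at which the nested simplices $\sigma_i$ first acquire a vertex outside $Y$; the only genuinely geometric input is the non-emptiness of the projection of a simplex on a convex set, which is supplied by Lemma~\ref{proj}.

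First I would record the trivial identity $\bigl(\bigcap_{i=1}^n e_Y(\sigma_i)\bigr)\cap Y=\bigcap_{i=1}^n\bigl(e_Y(\sigma_i)\cap Y\bigr)$ and compute each factor. If $\sigma_i\subseteq Y$ then $e_Y(\sigma_i)\cap Y=\sigma_i$. If $\sigma_i\not\subseteq Y$, write $\rho_i:=\sigma_i\cap S_1(Y)$; this is a non-empty face of $\sigma_i$ contained in $S_1(Y)$, hence disjoint from $Y$, and by Lemma~\ref{proj} its projection $\tau_i:=\mathrm{lk}(\rho_i,{\bf X})\cap Y$ on $Y$ is a single (in particular non-empty) simplex. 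Since $e_Y(\sigma_i)=\rho_i\ast\tau_i$ and no vertex of $\rho_i$ lies in $Y$, we get $e_Y(\sigma_i)\cap Y=\tau_i$. Because the $\sigma_i$ are nested, $\{i:\sigma_i\subseteq Y\}$ is an initial segment $\{1,\dots,k\}$ with $0\le k\le n$ ($k=0$ if no $\sigma_i$ lies in $Y$, $k=n$ if they all do), so $\sigma_i\subseteq Y$ for $i\le k$ and $\sigma_i\not\subseteq Y$ for $i>k$. Moreover $\rho_{k+1}\subseteq\rho_{k+2}\subseteq\cdots\subseteq\rho_n$ is increasing, whence the links $\mathrm{lk}(\rho_i,{\bf X})$ decrease and therefore $\tau_{k+1}\supseteq\tau_{k+2}\supseteq\cdots\supseteq\tau_n$.

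Next I would assemble the intersection. For $i\le k$ the factors are the increasing non-empty simplices $\sigma_1\subseteq\cdots\subseteq\sigma_k$, whose intersection is $\sigma_1$; for $i>k$ the factors are the decreasing simplices $\tau_{k+1}\supseteq\cdots\supseteq\tau_n$, whose intersection is $\tau_n$. Hence $\bigl(\bigcap_{i=1}^n e_Y(\sigma_i)\bigr)\cap Y$ equals $\sigma_1$ if $k=n$, equals $\tau_n$ if $k=0$, and equals $\sigma_1\cap\tau_n$ if $1\le k<n$. It remains to see this is non-empty in each case. If $k=n$ the set is $\sigma_1\neq\emptyset$. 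If $k=0$ the set is $\tau_n$, which is non-empty by the application of Lemma~\ref{proj} to $\rho_n$ noted above. Finally, if $1\le k<n$, every vertex of $\sigma_k$ lies in $Y$ and, since $\sigma_k\subseteq\sigma_n$, is adjacent to every vertex of $\sigma_n$, in particular to every vertex of $\rho_n$; thus $\sigma_k\subseteq\mathrm{lk}(\rho_n,{\bf X})\cap Y=\tau_n$, and as $\emptyset\neq\sigma_1\subseteq\sigma_k$ we conclude $\sigma_1\cap\tau_n=\sigma_1\neq\emptyset$.

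The step that actually uses the geometry of weakly systolic complexes—rather than pure combinatorics of joins and of nested families of simplices—is the non-emptiness of $\tau_n$ in the case $k=0$, i.e. Lemma~\ref{proj}, which rests on the convexity of $Y$ together with the flag condition. In Przytycki's systolic argument \cite{Pr3} the corresponding fact is read off from convexity of balls around simplices, which fails in the weakly systolic setting; routing it through Lemma~\ref{proj} (where convexity is invoked only for $Y$ and for balls around single vertices) is the one place where the proof has to be adapted. Everything else is the verbatim counting above.
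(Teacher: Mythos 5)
You invoke Lemma~\ref{proj} as if it delivered the \emph{non-emptiness} of the projection $\tau_i=\mathrm{lk}(\rho_i,{\bf X})\cap Y$, and the whole case $k=0$ of your argument rests on that. But Lemma~\ref{proj} (and its one-line proof in the paper) only shows that the vertices of $\tau$ are pairwise adjacent, so that $\tau$ is a simplex \emph{if non-empty} --- in fact the hypothesis of that lemma already assumes $\sigma\subseteq B_1(Y)$, which is exactly what guarantees $\tau\neq\emptyset$, and this is not something your reduction re-derives: you apply it to $\rho_n=\sigma_n\cap S_1(Y)$, and indeed $\rho_n\subseteq B_1(Y)$ because $\sigma_n\subseteq B_1(Y)$, so non-emptiness of $\tau_n$ does follow, but you should say this explicitly rather than attribute a ``non-emptiness'' statement to the convexity-plus-flag argument of Lemma~\ref{proj}, whose proof never addresses it. Once that small point is made precise (every vertex of $\rho_n$ lies in $B_1(Y)\setminus Y$, hence has a neighbor in $Y$; convexity of $Y$ and the flag condition then make the set of common neighbors of $\rho_n$ in $Y$ a non-empty simplex --- this is really the content one needs, and it is what the definition of ``projection'' in the paper presupposes), your bookkeeping is correct: the nestedness forces $e_Y(\sigma_i)\cap Y$ to be $\sigma_i$ for the initial segment inside $Y$ and the decreasing projections $\tau_i$ afterwards, and the case analysis you give does yield a common vertex in $Y$. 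Note also that the paper does not prove this lemma at all; it cites Przytycki's \cite[Lemma 3.8]{Pr3} and asserts that his proof goes through verbatim in the weakly systolic setting, so your write-up is in effect supplying the argument the paper omits, and modulo the clarification above it is a valid one.
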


Let $\sigma$ be a simplex of a weakly systolic complex $\bf X$.
As in \cite{Pr3}, we define an increasing sequence of full subcomplexes
${\bf D}_{2i}(\sigma)$ and ${\bf D}_{2i+1}(\sigma)$ of the baricentric subdivision
${\bf X}'$ of $\bf X$ in the following way. Let ${\bf D}_{2i}(\sigma)$ be the subcomplex spanned by
all vertices of ${\bf X}'$ corresponding to simplices of $\bf X$  which have
all their vertices in $K_i(\sigma)$. Let ${\bf D}_{2i+1}(\sigma)$ be the subcomplex spanned by
all vertices of ${\bf X}'$ which correspond to those simplices of $\bf X$ that
have all their vertices in $K_{i+1}(\sigma)$ and at least one vertex in $K_i(\sigma)$.
The proof of the main proposition in this section follows closely the proof of \cite[Proposition 1.4]{Pr3}.

\begin{proposition}[Contractibility of the fixed point set]
\label{inv set contr}
Let $H$ be a group acting by simplicial automorphisms on a weakly systolic
complex $\bf X$. Then the complex $\mr {Fix}_H {\bf X}'$ is contractible or empty.
\end{proposition}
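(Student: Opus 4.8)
The plan is to mimic Przytycki's proof of \cite[Proposition 1.4]{Pr3}, with the convex sets $K_i(\sigma)$ playing the role of balls around simplices. If $\mr{Fix}_H{\bf X}'=\emptyset$ there is nothing to prove, so assume it is non-empty. A simplicial automorphism of ${\bf X}'$ that fixes a point in the relative interior of a simplex $\tau'$ fixes $\tau'$ setwise and hence, by uniqueness of barycentric coordinates, pointwise; thus $\mr{Fix}_H{\bf X}'$ is a subcomplex of ${\bf X}'$, and being non-empty it contains a vertex of ${\bf X}'$, that is, a simplex $\sigma$ of ${\bf X}$ with $H\sigma=\sigma$. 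The barycenter $\widehat\sigma$ will serve as the contraction centre.

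First I would set up the filtration ${\bf D}_0(\sigma)\subseteq {\bf D}_1(\sigma)\subseteq\cdots$ of ${\bf X}'$ defined above. Since $H$ permutes the vertices of $\sigma$, it permutes the balls $B_i(v)$, $v\in\sigma$, hence preserves $K_i(\sigma)=\bigcap_{v\in\sigma}B_i(v)$, so that every ${\bf D}_j(\sigma)$ is $H$-invariant. Every simplex of ${\bf X}$ has bounded distance to the finite simplex $\sigma$, so $\bigcup_j{\bf D}_j(\sigma)={\bf X}'$; and ${\bf D}_0(\sigma)$ is exactly the barycentric subdivision $\sigma'$ of $\sigma$, since a simplex of ${\bf X}$ contained in $K_0(\sigma)=\sigma$ is a face of $\sigma$.

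The heart of the argument is to build, for each $j$, an $H$-equivariant deformation retraction of $|{\bf D}_{j+1}(\sigma)|$ onto $|{\bf D}_j(\sigma)|$ that is stationary on $|{\bf D}_j(\sigma)|$. For $j=2i$ one takes the order-preserving self-map of the face poset sending the simplex $\tau$ of ${\bf X}$ underlying a vertex of ${\bf D}_{2i+1}(\sigma)$ to its face spanned by $V(\tau)\cap K_i(\sigma)$ (non-empty by the definition of ${\bf D}_{2i+1}(\sigma)$); this map lies below the identity, so Propositions \ref{p4.1p3} and \ref{p4.2p3} provide the homotopy, and the last clause of Proposition \ref{p4.1p3} makes it stationary on ${\bf D}_{2i}(\sigma)$. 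For $j=2i+1$ one uses the expansion by projection $e_{K_i(\sigma)}$: a simplex $\tau$ underlying a vertex of ${\bf D}_{2i+2}(\sigma)\setminus{\bf D}_{2i+1}(\sigma)$ is disjoint from $K_i(\sigma)$ and, by Lemma \ref{bint}, contained in $B_1(K_i(\sigma))$, so $e_{K_i(\sigma)}(\tau)$ is defined; Lemma \ref{proj} makes the projections appearing here single simplices, and Lemma \ref{L3.8p3}, applied along increasing chains of such simplices, shows that these expansions meet $K_i(\sigma)$ and therefore land in ${\bf D}_{2i+1}(\sigma)$. Propositions \ref{p4.1p3} and \ref{p4.2p3} then again produce a homotopy, stationary on ${\bf D}_{2i+1}(\sigma)$. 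Since the sets $K_i(\sigma)$ and the projections onto them are $H$-invariant, all these homotopies commute with the action of $H$.

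Concatenating the stationary homotopies — compressing the $j$-th one into a parameter subinterval so that the subintervals accumulate at a single endpoint — yields an $H$-equivariant deformation retraction of $|{\bf X}'|$ onto $|\sigma'|$; continuity at the accumulation point holds because $|{\bf X}'|$ carries the weak topology of the filtration, so on each $|{\bf D}_N(\sigma)|$ all but finitely many of the homotopies are stationary. Taking $H$-fixed points commutes with this, so $\mr{Fix}_H{\bf X}'$ deformation retracts onto $\mr{Fix}_H\sigma'$. Finally, $\mr{Fix}_H\sigma'$ is contractible: its vertices are the $H$-invariant faces of $\sigma$, and any chain of such faces can be enlarged by appending $\sigma$ itself, so every maximal simplex of $\mr{Fix}_H\sigma'$ contains $\widehat\sigma$; hence $\mr{Fix}_H\sigma'$ is a cone with apex $\widehat\sigma$. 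Therefore $\mr{Fix}_H{\bf X}'$ is contractible or empty. The step I expect to be most delicate is the construction of the two families of retractions: checking that the expansion-by-projection assignment is a genuine order-preserving map of flag posets along whole chains (this is exactly the role of Lemma \ref{L3.8p3}) and that the resulting homotopies are stationary on the preceding stage, so that Przytycki's systolic argument carries over once convex balls are replaced throughout by the sets $K_i(\sigma)$.
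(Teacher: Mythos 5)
Your blueprint coincides with the paper's up to the endgame: the filtration ${\bf D}_j(\sigma)$ built from the convex sets $K_i(\sigma)$, the vertex-restriction functor at even stages, and expansion by projection with Lemmas \ref{bint}, \ref{proj}, \ref{L3.8p3} and Propositions \ref{p4.1p3}, \ref{p4.2p3} at odd stages. The genuine gap is your claim that at the odd stage these propositions ``again produce a homotopy, stationary on ${\bf D}_{2i+1}(\sigma)$'', giving a deformation retraction of $|{\bf D}_{2i+2}(\sigma)|$ onto $|{\bf D}_{2i+1}(\sigma)|$ stationary on the smaller complex. They do not. What they give is only that the identity of ${\bf D}_{2i+2}$ is homotopic to a map with image in ${\bf D}_{2i+1}$: the functor $F_1$ built from the expansions is compared with the minimal-element functor $F_0$ on the flag poset; the homotopy of Proposition \ref{p4.1p3} is guaranteed constant only where $F_0$ and $F_1$ agree, which in general is only on chains of simplices contained in $K_i(\sigma)$, i.e.\ on (the subdivision of) ${\bf D}_{2i}$, not on ${\bf D}_{2i+1}$; the homotopy of Proposition \ref{p4.2p3} from $|F_0|$ to the identity is not relative to anything; and $|F_1|$ need not restrict to the identity on ${\bf D}_{2i+1}$, so the terminal map is not a retraction at all. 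Since your infinite concatenation needs every homotopy to be stationary on the preceding stage (that is exactly what makes the telescope continuous and makes the end map a retraction onto $|\sigma'|$), the global equivariant deformation retraction of $|{\bf X}'|$ onto $|\sigma'|$ is not established, and with it the restriction-to-fixed-points conclusion. The paper never needs such a retraction: it proves only that ${\bf D}_{2i}\cap\mr{Fix}_H{\bf X}'\hookrightarrow{\bf D}_{2i+1}\cap\mr{Fix}_H{\bf X}'$ is a homotopy equivalence and that the identity of ${\bf D}_{2i+2}\cap\mr{Fix}_H{\bf X}'$ is homotopic to a map into ${\bf D}_{2i+1}\cap\mr{Fix}_H{\bf X}'$, and then concludes, as in \cite[Proposition 1.4]{Pr3}, that each ${\bf D}_k\cap\mr{Fix}_H{\bf X}'$ is contractible (domination by a contractible complex plus Whitehead) and hence so is the increasing union (any sphere lands in some ${\bf D}_k$ by compactness).

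A secondary weakness is that you apply the functors to the posets of \emph{all} simplices and then restrict to fixed sets, which requires the homotopies of Propositions \ref{p4.1p3} and \ref{p4.2p3} to be $H$-equivariant; the propositions as stated do not assert this (it is true for the canonical constructions, but you would have to verify it). The paper avoids the issue altogether by letting the posets consist of $H$-invariant simplices from the start, so that all functors and homotopies already live on ${\bf D}_\ast\cap\mr{Fix}_H{\bf X}'$; your observation that $K_i(\sigma)$, projections, expansions and intersections are canonical is precisely what makes those restricted functors well defined. If you keep your even-stage retraction, weaken the odd stage to ``identity homotopic into ${\bf D}_{2i+1}$'', and finish with the limit argument on the fixed subcomplexes, your proof becomes the paper's. (Two small further points: the paper takes $\sigma$ to be a \emph{maximal} $H$-invariant simplex; and the fact that a setwise-invariant simplex of ${\bf X}'$ is fixed pointwise should be justified by noting that its vertices are barycenters of simplices of ${\bf X}$ of pairwise distinct dimensions, not by uniqueness of barycentric coordinates, which alone does not rule out a nontrivial permutation of vertices fixing an interior point.)
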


\begin{proof}
Assume that $\mr {Fix}_H{\bf X}'$ is nonempty and let $\sigma$ be a maximal
$H$--invariant simplex. By ${\bf D}_i$ we will
denote here ${\bf D}_i(\sigma)$. We will prove the following three
assertions.

\medskip\noindent
(i) ${\bf D}_0\cap \mr {Fix}_H{\bf X}'$ is contractible;

\medskip\noindent
(ii) the inclusion ${\bf D}_{2i}\cap \mr {Fix}_H{\bf X}' \subseteq {\bf D}_{2i+1}\cap
\mr {Fix}_H{\bf X}'$ is a homotopy equivalence;

\medskip\noindent
(iii) the identity on ${\bf D}_{2i+2}\cap \mr {Fix}_H{\bf X}'$ is homotopic to a
mapping with image in ${\bf D}_{2i+1}\cap \mr {Fix}_H{\bf X}'\subseteq
{\bf D}_{2i+2}\cap \mr {Fix}_H{\bf X}'$.

\medskip
As in the proof of \cite[Proposition 1.4]{Pr3}, the three assertions
imply that ${\bf D}_{k}\cap \mr {Fix}_H{\bf X}'$ is contractible for every $k$,
thus the proposition holds.  To show (i), note that ${\bf D}_{0}\cap \mr {Fix}_H{\bf X}'$
is a cone over the barycenter of $\sigma$ and hence it is contractible.

To prove (ii), let $\cal C$ be the poset of $H$--invariant simplices
in $\bf X$ with vertices in $K_{i+1}(\sigma)$ and at least one vertex in
$K_i(\sigma)$. Its geometric realization is ${\bf D}_{2i+1}\cap \mr
{Fix}_H {\bf X}'$. Consider a functor $F\colon \cal C \to \cal C$ assigning
to each object of $\cal C$ (i.e., each  simplex of $\bf X$), its subsimplex
spanned by its vertices in $K_i(A)$. By Proposition \ref{p4.1p3}, the geometric
realization of $F$ is homotopic to identity (which is the geometric
realization of the identity functor). Moreover this homotopy is
constant on ${\bf D}_{2i}\cap \mr {Fix}_H{\bf X}'$. The image of the geometric
realization of $F$ is contained in ${\bf D}_{2i}\cap \mr {Fix}_H{\bf X}'$.
Hence ${\bf D}_{2i}\cap \mr {Fix}_H{\bf X}'$ is a deformation retract of
${\bf D}_{2i+1}\cap \mr {Fix}_H{\bf X}',$ as desired.

To establish (iii), let $\cal C$ be the poset of $H$--invariant simplices
of ${\bf X}'$ with vertices in $K_{i+1}(\sigma)$ and let $\cal C'$ be its
flag poset. Let also $F_0\colon \cal C'\to \cal C$ be the functor
assigning to each object of $\cal C'$ its minimal element;
cf.\ Proposition \ref{p4.2p3}.  Now we define another functor
$F_1\colon \cal C'\to \cal C$.
For any object $c'$ of $\cal C'$, which is a chain of objects
$c_1<c_2<\ldots<c_k$ of $\cal C$, recall that $c_j$ are some
$H$--invariant simplices in $K_{i+1}(\sigma)$.
Let $c_j'=e_{K_i(\sigma)}(c_j).$ Then by Lemma \ref{L3.8p3}
the intersection $\bigcap_{j=1}^{k}c_j'$ contains at least one vertex
in $K_i(\sigma)$.
Thus $\bigcap_{j=1}^{k}c_j'$ is an $H$--invariant non-empty simplex
and hence it is an object of $\cal C$. We define $F_1(c')$ to be this
object. In the geometric realization of $\cal C$, which is
${\bf D}_{2i+2}\cap \mr {Fix}_H{\bf X}'$, the object $F_1(c')$ corresponds to a
vertex of ${\bf D}_{2i+1}\cap \mr {Fix}_H{\bf X}'$.
It is obvious that $F_1$ preserves the partial order. Notice that for any object $c'$ of $\cal C'$ we have
$F_0(c')\subseteq F_1(c')$, hence, by Proposition \ref{p4.2p3}, the geometric
realizations of $F_0$ and $F_1$ are homotopic. We have that $F_0$
is homotopic to the identity and that $F_1$ has image in ${\bf D}_{2i+1}\cap \mr
{Fix}_H{\bf X}',$ thus establishing (iii).
\end{proof}

\section{Remarks on systolic complexes}
\label{final}

In this final section, we restrict to the case of systolic complexes and present some further results in that case.
First, using Lemma 3.10 and Theorem 3.11 of Polat \cite{Po} for bridged graphs, we prove a stronger version of the fixed
point theorem for systolic complexes. Namely, Polat \cite{Po} established that for any subset $\ov Y$ of vertices  of a graph with finite intervals,
there exists a minimal isometric subgraph of this graph which contains $\ov Y.$ Moreover, if $\ov Y$ is finite and the graph is bridged, then \cite[Theorem 3.11(i)]{Po} shows that this minimal isometric (and hence bridged)  subgraph is also finite. We continue with two lemmata which can be viewed  as
$G$--invariant versions of these two results of Polat  \cite{Po}.

\begin{lemma}[Minimal subcomplex]
\label{minsubcx}
Let a group $G$ act by simplicial automorphisms on a systolic
complex $\bf X$. Let $\ov Y$ be a $G$--invariant set of vertices of $\bf X$.
Then there exists a minimal $G$--invariant subcomplex $\bf Y$ of $\bf X$
containing $\ov Y$, which is itself a systolic complex.
\end{lemma}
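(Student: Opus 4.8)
The plan is to build $\mathbf{Y}$ as the clique complex of a suitable minimal isometric $G$-invariant subgraph of $G(\mathbf{X})$ and then invoke the known characterization of systolic complexes to see that this clique complex is systolic. First I would set $\ov{Y}$ aside and work entirely at the level of the $1$-skeleton. Since $\mathbf{X}$ is systolic, $G(\mathbf{X})$ is a bridged graph, hence has finite intervals (balls are finite in the locally finite case; more generally intervals in bridged graphs are finite), so the hypotheses of Polat's result \cite[Lemma 3.10, Theorem 3.11]{Po} are met and there is an isometric subgraph $H$ of $G(\mathbf{X})$ that is minimal among isometric subgraphs containing $\ov{Y}$. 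The key point I must add to Polat's statement is \emph{$G$-invariance}: I would run Polat's construction in a $G$-equivariant way, or equivalently observe that since $\ov{Y}$ is $G$-invariant and $G$ acts by automorphisms (hence by isometries), if $H$ is isometric and contains $\ov{Y}$ then so is $g(H)$ for every $g\in G$, and therefore $\bigcap_{g\in G} g(H)$ — provided it is still isometric and contains $\ov{Y}$ — is $G$-invariant; but intersections of isometric subgraphs need not be isometric, so instead I would invoke Polat's \emph{transfinite} construction of a minimal isometric subgraph (built by successively deleting ``removable'' vertices, each deletion preserving isometry) and carry it out orbit-by-orbit under $G$: whenever a vertex $v$ is removable we remove the whole orbit $Gv$ at once, which is legitimate because removability is an isometry-invariant local condition. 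The result is a $G$-invariant subgraph $H$ which is minimal isometric among $G$-invariant isometric subgraphs containing $\ov{Y}$, and by \cite[Theorem 3.11(i)]{Po} it is finite when $\ov{Y}$ is finite.

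Next I would verify that $H$, as an isometric subgraph of a bridged graph, is itself bridged. This is standard: isometric subgraphs inherit the property of having no isometric cycles of length $>3$, because an isometric cycle of $H$ is an isometric cycle of $G(\mathbf{X})$ (distances agree), and $G(\mathbf{X})$ being bridged has none of length exceeding $3$ \cite{FaJa,SoCh}. Hence $H$ is a bridged graph. Then I would let $\mathbf{Y}:=\mathbf{X}(H)$ be the clique complex (flag complex) of $H$; since bridged graphs are exactly the $1$-skeletons of systolic complexes, $\mathbf{Y}$ is a systolic complex. It is $G$-invariant because $H$ is, and it contains $\ov{Y}$ since $H$ does and $\ov{Y}$ is a vertex set.

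The remaining point is \emph{minimality} of $\mathbf{Y}$ as a $G$-invariant \emph{subcomplex} (not merely as an isometric subgraph) containing $\ov{Y}$. Here I would argue: if $\mathbf{Y}'\subseteq\mathbf{Y}$ were a strictly smaller $G$-invariant systolic subcomplex containing $\ov{Y}$, then $G(\mathbf{Y}')$ would be a proper $G$-invariant subgraph of $H$; as a systolic complex $G(\mathbf{Y}')$ is bridged, and a bridged subgraph which is full (a clique complex's $1$-skeleton determines the complex, and a systolic subcomplex of a systolic complex is full) and convex — or at least isometric — inside $G(\mathbf{X})$ would contradict the minimality of $H$ among $G$-invariant isometric subgraphs. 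The one genuine obstacle I anticipate is precisely reconciling the two notions of minimality: Polat's result gives minimality among isometric subgraphs, whereas the statement asks for a minimal \emph{subcomplex}, and one must check that any $G$-invariant systolic subcomplex containing $\ov{Y}$ has an isometric $1$-skeleton (so that it can be compared with $H$). This follows because a systolic — hence bridged — subcomplex of a systolic complex is automatically isometric: convex subcomplexes of systolic complexes are isometric, and more directly, if $\mathbf{Z}\subseteq\mathbf{X}$ is a full subcomplex with bridged $1$-skeleton then $G(\mathbf{Z})$ is isometrically embedded (this is where one uses that intervals in bridged graphs are ``rigid''). Granting that, minimality of $H$ forces $G(\mathbf{Y}')=H$, hence $\mathbf{Y}'=\mathbf{Y}$, completing the proof.
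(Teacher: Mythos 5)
The central step of your argument --- producing a \emph{$G$-invariant} minimal isometric subgraph --- has a genuine gap. Your equivariant modification of Polat's construction, ``whenever a vertex $v$ is removable, remove the whole orbit $Gv$ at once, which is legitimate because removability is an isometry-invariant local condition,'' does not work: removability of each vertex separately does not imply that deleting the whole orbit simultaneously preserves isometry (or even connectedness). Concretely, let $\mathbf{X}$ consist of two triangles $xab$ and $zab$ glued along the edge $ab$, let $\overline{Y}=\{x,z\}$, and let $\mathbb{Z}/2$ act by swapping $a$ and $b$ while fixing $x$ and $z$. Each of $a$, $b$ is removable (the remaining induced path $x$--$b$--$z$, resp.\ $x$--$a$--$z$, is isometric and contains $\overline{Y}$), but removing the orbit $\{a,b\}$ disconnects $x$ from $z$; here the minimal $G$-invariant isometric subcomplex containing $\overline{Y}$ is the whole complex. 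Since you had already (rightly) discarded $\bigcap_{g\in G}g(H)$, your proof is left with no valid mechanism for producing the invariant minimal object; moreover, even a corrected ``remove only orbits whose deletion preserves isometry'' procedure is not obviously sufficient, because failure of minimality may only be witnessed by deleting several orbits at once. The paper avoids all of this by applying Zorn's lemma directly to chains of $G$-invariant subcomplexes containing $\overline{Y}$ whose underlying graphs are isometric in $G(\mathbf{X})$ (hence which are themselves systolic), and using the argument of Polat's Lemma 3.10 to show that the intersection of such a chain again lies in this class; that chain-intersection step is the key input missing from your proposal.

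A second problem is your reconciliation of the two notions of minimality: the claim that a full subcomplex of a systolic complex with bridged $1$-skeleton is automatically isometrically embedded is false. In the systolic fan formed by the three triangles $upw_1$, $w_1pw_2$, $w_2pv$, the full subcomplex spanned by $\{u,w_1,w_2,v\}$ is an induced path of length $3$ (a tree, hence systolic), yet $d(u,v)=2$ in the ambient complex. So minimality must be understood, as in the paper's proof, within the class of $G$-invariant subcomplexes inducing isometric subgraphs and containing $\overline{Y}$; the stronger comparison with arbitrary systolic subcomplexes does not go through. A minor further slip: intervals in bridged graphs need not be finite (take two nonadjacent vertices both adjacent to all vertices of an infinite clique), so Polat's statement cannot be quoted verbatim for a general systolic complex; one adapts his proof, which is exactly what the paper does.
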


\begin{proof}
Let $\Sigma$ be a chain (with respect to the subcomplex relation)
of $G$--invariants subcomplexes of $\bf X$, which contain $\ov Y$ and induce isometric subgraphs of the underlying graph of $\bf X$
(and thus are systolic complexes themselves).  Then, as in the proof of \cite[Lemma 3.10]{Po}, we conclude that
the subcomplex ${\bf Y}=\bigcap \Sigma$ is a minimal $G$--invariant subcomplex
of $\bf X$, containing $\ov Y$ and which is itself a systolic complex.
\end{proof}

\begin{lemma}[Minimal finite subcomplex]
\label{minfin}

Let a group $G$ act by simplicial automorphisms on a systolic
complex $X$. Let $\ov Y$ be a finite $G$--invariant set of vertices of $X$.
Then there exists a minimal (as a simplicial complex) finite $G$--invariant subcomplex $Y$ of $X$,
which is itself a systolic complex.
\end{lemma}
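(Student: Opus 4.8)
The plan is as follows. First note that minimality is automatic once existence is known: among the \emph{finite} $G$-invariant subcomplexes of $X$ that contain $\overline Y$ and are systolic, every strictly decreasing chain terminates (the number of vertices drops at each step), so if this family is nonempty it has a minimal element. Hence it suffices to exhibit one finite $G$-invariant systolic subcomplex of $X$ containing $\overline Y$ (compare the non-finite statement of Lemma \ref{minsubcx}).

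First I would let $A:=\mathrm{conv}(\overline Y)$ be the convex hull of $\overline Y$ in the $1$-skeleton of $X$ (which is a bridged graph, since $X$ is systolic), and take $Y$ to be the full subcomplex of $X$ spanned by $A$. Since $G$ acts by simplicial automorphisms of $X$, it acts by automorphisms of the $1$-skeleton, and automorphisms of a graph send convex sets to convex sets and commute with convex hulls; thus $g\cdot A=\mathrm{conv}(g\overline Y)=\mathrm{conv}(\overline Y)=A$ for every $g\in G$, so $A$, and hence $Y$, is $G$-invariant, with $\overline Y\subseteq A$.

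Next I would verify that $Y$ is systolic. A convex subgraph is isometric, and an isometric subgraph of a bridged graph is again bridged: any isometric cycle $C$ of the subgraph induced by $A$ has, for any two of its vertices, the same distance measured in $C$, in the induced subgraph, and in the $1$-skeleton of $X$, so $C$ is an isometric cycle of the $1$-skeleton and hence has length at most $3$. Since $X$ is a flag complex, the full subcomplex $Y$ coincides with the clique complex of the subgraph induced by $A$; being the clique complex of a bridged graph, $Y$ is systolic.

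The substantial point is the finiteness of $Y$, and here I would invoke Polat's work. Since $\overline Y$ is finite and the $1$-skeleton of $X$ is bridged, \cite[Theorem 3.11(i)]{Po} shows that a minimal isometric subgraph containing $\overline Y$ is finite, and the same analysis of convexity in bridged graphs shows that the convex hull $A=\mathrm{conv}(\overline Y)$ of the finite set $\overline Y$ is finite as well (it is reached from $\overline Y$ by finitely many rounds of adjoining intervals of pairs of already-accumulated vertices, each such interval being finite). Thus $Y$ is finite, and by the first paragraph we may pass to a minimal such subcomplex. I expect the delicate step to be precisely this passage from ``the minimal isometric subgraph is finite'' to ``the convex hull is finite''; alternatively, to stay literally within \cite[Lemma 3.10]{Po} and \cite[Theorem 3.11(i)]{Po}, one may rerun Polat's construction of a finite minimal isometric subgraph containing $\overline Y$ $G$-equivariantly --- the equivariance being inherited from the $G$-invariance of $\overline Y$ --- and take the clique complex of the outcome.
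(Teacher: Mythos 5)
There is a genuine gap in your main argument: the claim that the convex hull $A=\mathrm{conv}(\ov Y)$ is finite. Lemma \ref{minfin} does not assume $X$ to be locally finite --- indeed the whole point of this section (Theorem \ref{fpt_sc}, as opposed to Theorem \ref{fpt}) is to drop local finiteness --- and in a non-locally-finite systolic complex intervals, hence convex hulls, of finite sets can be infinite. For example, take two non-adjacent vertices $u,v$ both adjacent to every vertex of an infinite clique $W$: this graph is bridged (no induced $4$- or $5$-cycles, no long isometric cycles), its clique complex is systolic, and already $I(u,v)=\{u,v\}\cup W$ is infinite, so $\mathrm{conv}(\{u,v\})$ is infinite. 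Your parenthetical justification (``finitely many rounds of adjoining intervals, each such interval being finite'') uses exactly the local finiteness you are not entitled to; in the locally finite case the hull is indeed finite (it is bounded, by the diameter-preservation property of bridged graphs \cite{SoCh}, hence contained in a finite ball), but that case is already handled by Theorem \ref{fpt}.

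This is precisely why the paper does not prove finiteness of the hull. Its proof uses $\mathrm{conv}(\ov Y)$ only to obtain a \emph{bounded} systolic full subcomplex ${\bf Z}$, then applies Lemma \ref{minsubcx} (the $G$-invariant analogue of Polat's Lemma 3.10, a Zorn-type argument over chains of $G$-invariant subcomplexes inducing isometric subgraphs) inside ${\bf Z}$ to get a minimal $G$-invariant systolic subcomplex ${\bf Y}\supseteq \ov Y$, and finally runs the proof of \cite[Theorem 3.11]{Po} on the bounded bridged graph underlying ${\bf Y}$ to conclude that this minimal subcomplex is finite. Your fallback sentence (``rerun Polat's construction $G$-equivariantly'') points in this direction, but as written it is only a sketch: one must check that Polat's finiteness argument still applies to a subgraph that is merely minimal among $G$-invariant isometric subgraphs (not among all isometric subgraphs containing $\ov Y$), which is exactly what the detour through the bounded hull and Lemma \ref{minsubcx} is for. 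So the intended proof goes ``bounded hull $\to$ minimal $G$-invariant isometric subcomplex $\to$ finiteness via Polat,'' not ``finite hull $\to$ minimality by descending chains,'' and your primary route would fail at the step you yourself flagged as delicate.
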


\begin{proof}
Let $\mr{conv}(\ov Y)$ be the convex hull of $\ov Y$ in $\bf X$. The full subcomplex $ {\bf Z}$ of $\bf X$ spanned by $\mr{conv}(\ov Y)$ is a bounded
systolic complex. By Lemma \ref{minsubcx}, there exists a minimal $G$--invariant subcomplex $\bf Y$ of $\bf Z$ containing the set $\ov Y$ and
which itself is a systolic complex. Then,  applying  the proof of \cite[Theorem 3.11]{Po} to the bounded bridged graphs which are $1$--skeleta of the  systolic complexes $\bf Y$ of $\bf Z$, it follows that $\bf Y$ is finite.
\end{proof}

\begin{theorem}[The fixed point theorem]
\label{fpt_sc}
Let $G$ be a finite group acting by simplicial automorphisms
on a systolic complex $\bf X$. Then there exists
a simplex $\sigma \in {\bf X}$ which is invariant under the action
of $G$.
\end{theorem}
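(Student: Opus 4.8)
The plan is to reduce the statement to the locally finite case, which was already settled in Theorem~\ref{fpt}. A systolic complex satisfies $SD_n(\sigma^*)$ for every simplex $\sigma^*$, in particular for every vertex, so it is weakly systolic; the only hypothesis of Theorem~\ref{fpt} that the given complex $\bf X$ may lack is thus local finiteness. The idea is therefore to replace $\bf X$ by a finite $G$-invariant systolic subcomplex, using the $G$-equivariant form of Polat's minimality results recorded in Lemma~\ref{minfin}.

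Concretely, first I would pick an arbitrary vertex $v\in{\bf X}$ and form its orbit $\overline Y=\{gv:\ g\in G\}$. Since $G$ is finite and acts by simplicial automorphisms, $\overline Y$ is a finite $G$-invariant set of vertices of $\bf X$. By Lemma~\ref{minfin} there is a finite $G$-invariant subcomplex ${\bf Y}\subseteq{\bf X}$ which is itself systolic (and contains $\overline Y$). Being finite, $\bf Y$ is a locally finite weakly systolic complex, and $G$ acts on it by simplicial automorphisms by restriction. Applying Theorem~\ref{fpt} to this action produces a simplex $\sigma\in{\bf Y}$ invariant under $G$; as $\bf Y$ is a subcomplex of $\bf X$, this $\sigma$ is the required invariant simplex.

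I expect essentially all of the work to be hidden in Lemma~\ref{minfin} (and in Lemma~\ref{minsubcx} on which it relies): one must know that among the $G$-invariant isometric subgraphs of a bridged graph through a prescribed finite $G$-invariant vertex set there is a minimal one, and that this minimal subgraph is finite. Granting this, the argument above is bookkeeping. One could instead re-run the proof of Theorem~\ref{fpt} directly inside $\bf Y$ --- if a minimal finite $G$-invariant systolic subcomplex were not a single simplex, Lemma~\ref{str dominat} would yield a strictly dominated vertex and Lemma~\ref{LC} a strictly smaller $G$-invariant weakly systolic subcomplex, contradicting minimality --- but since an LC-reduction of a systolic complex need not remain systolic, it is cleaner to apply Theorem~\ref{fpt} to the finite, hence locally finite, complex $\bf Y$.
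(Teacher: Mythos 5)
Your proposal is correct and follows essentially the same route as the paper: take the orbit $\ov Y=Gv$, invoke Lemma~\ref{minfin} to obtain a finite $G$-invariant systolic subcomplex $\bf Y$, and then extract an invariant simplex. The only (cosmetic) difference is that you apply Theorem~\ref{fpt} to the finite, hence locally finite, complex $\bf Y$ as a black box, whereas the paper says it repeats the argument of Theorem~\ref{fpt} inside $\bf Y$; your reading is the cleaner of the two and your remark about why one should not rely on LC-reductions preserving systolicity is a sensible precaution.
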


\begin{proof}
Let $\ov Y=Gv=\lk gv|\; \; g\in G\rk$, for some vertex $v\in {\bf X}$.
Then $\ov Y$ is a finite $G$--invariant set of vertices of $\bf X$ and thus, by Lemma
\ref{minfin}, there exists a minimal finite $G$--invariant subcomplex $\bf Y$ of $\bf X$,
which is itself a systolic complex. Then, the same way as in the proof of Theorem \ref{fpt},
we conclude that there exists a simplex in $\bf Y$ that is $G$--invariant.
\end{proof}

\begin{remark} We believe that,  as in the systolic case, the stronger version of Theorem \ref{fpt} holds also for weakly systolic complexes, i.e.,
one can drop the assumption on the local finiteness of $\bf X$ in Theorem \ref{fpt}. This needs extensions of some results of Polat
(in particular,  Theorems 3.8 and 3.11 from \cite{Po}) to the class of weakly bridged graphs.
\end{remark}

Zawi\' slak \cite{Z} initiated another approach to the fixed point theorem in the systolic case based on the following notion of round subcomplexes.
A systolic complex $\bf X$ of finite diameter $k$ is {\it round} (cf.\ \cite{Pr2}) if $\cap\{ B_{k-1}(v): v\in V({\bf X})\}=\emptyset.$ Przytycki \cite{Pr2} established
that all round systolic complexes have diameter at most $5$ and used this result to prove that for any finite group $G$ acting by simplicial automorphisms
on a systolic complex there exists a subcomplex   of diameter at most 5 which is invariant under the action of $G$. Zawi\' slak \cite[Conjecture 3.3.1]{Z} and Przytycki (Remark 8.1 of \cite{Pr2}) conjectured
that in fact the diameter of round systolic complexes must be at most 2. Zawi\'{s}lak \cite[Theorem 3.3.1]{Z} showed that if this is true, then it implies that
$G$ has an invariant simplex, thus paving another way to the proof of Theorem \ref{fpt_sc}. We will show now that the positive answer to the question of
Zawi\' slak and Przytycki directly follows from an earlier result of Farber \cite{Fa} on diameters and radii of finite bridged graphs.

\begin{proposition}[Round systolic complexes]\label{round}
Any round systolic complex $\bf X$ has diameter at most 2.
\end{proposition}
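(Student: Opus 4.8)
The plan is to reduce the statement about round systolic complexes to a purely graph-theoretic fact about finite bridged graphs due to Farber \cite{Fa}. Recall that a systolic complex $\mathbf{X}$ of finite diameter $k$ is round if $\bigcap_{v\in V(\mathbf{X})}B_{k-1}(v)=\emptyset$; equivalently, no single vertex is within distance $k-1$ of all the others, so every vertex $v$ admits an \emph{antipode}, a vertex $w$ with $d(v,w)=k$. I want to show $k\le 2$.

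First I would recall that, since $\mathbf{X}$ is systolic, its $1$-skeleton $G=G(\mathbf{X})$ is a bridged graph, and $G$ is finite because $\mathbf{X}$ has finite diameter and is locally finite (or one invokes that a bridged graph of finite diameter with the relevant finiteness is finite — in any case the roundness hypothesis is about a complex of finite diameter $k$, so $G$ has diameter $k$). The key input is Farber's theorem on diameters and radii of finite bridged graphs: for a finite bridged graph $G$ one has a relation between the radius $r(G)$ and the diameter $\operatorname{diam}(G)$, namely that $G$ always contains a vertex $c$ whose eccentricity is small relative to the diameter — concretely, Farber \cite{Fa} shows $r(G)\le \lceil \operatorname{diam}(G)/2\rceil$ fails in general but for bridged graphs one gets the stronger \emph{self-centeredness-type} bound that if $\operatorname{diam}(G)=k$ then there is a vertex within distance $k-1$ of everything unless $k\le 2$. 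The precise statement I would cite is: a finite bridged graph of diameter $k\ge 3$ has radius $\le k-1$, i.e.\ $r(G)<\operatorname{diam}(G)$ whenever $\operatorname{diam}(G)\ge 3$.

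Given that, the argument is immediate: suppose $\mathbf{X}$ is round of diameter $k$. If $k\ge 3$, then by Farber's bound $r(G)\le k-1$, so there exists a vertex $v$ with eccentricity $\le k-1$, i.e.\ $v\in B_{k-1}(w)$ for every vertex $w$, hence $v\in\bigcap_{w}B_{k-1}(w)$, contradicting roundness. Therefore $k\le 2$. (The case $k=1$ is a simplex, which is trivially not round in the interesting sense, or is allowed; $k=2$ is the asserted bound.)

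The main obstacle is locating and citing the correct statement from Farber \cite{Fa}: I need the precise inequality relating radius and diameter for finite bridged graphs — specifically that diameter $\ge 3$ forces radius strictly less than the diameter, which is exactly what kills roundness. If Farber's paper only gives a weaker bound (e.g.\ $r\le \lceil k/2\rceil+O(1)$), that would still suffice since $\lceil k/2\rceil+c \le k-1$ for $k$ large, but I would need to handle the small cases $k=3,4,5$ by hand, using convexity of balls in bridged graphs and the fact (used already in \cite{Pr2}) that round systolic complexes have diameter at most $5$, so only finitely many residual cases remain. So the plan is: cite Farber for the radius/diameter inequality, derive the contradiction for $\operatorname{diam}\ge 3$, and if the cited bound is not tight enough, clean up $k\in\{3,4,5\}$ directly using the structure of bridged graphs.
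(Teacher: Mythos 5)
Your overall strategy is exactly the paper's: reduce roundness to Farber's radius--diameter inequality for bridged graphs, observe that roundness forces $\mathrm{rad}=\mathrm{diam}$, and conclude. Your worry about the precise form of the cited result is unfounded: \cite{Fa} (Theorem 4) gives $3\,\mathrm{rad}(G)\le 2\,\mathrm{diam}(G)+2$ for finite bridged graphs, and for $\mathrm{diam}(G)=d\ge 3$ this already yields $\mathrm{rad}(G)\le\lfloor(2d+2)/3\rfloor\le d-1$, which is the statement you wanted; no residual treatment of $k\in\{3,4,5\}$ is needed. (In fact the paper applies the inequality even more directly: roundness gives $\mathrm{rad}=\mathrm{diam}=k$, so $3k\le 2k+2$, i.e.\ $k\le 2$.)

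The genuine gap is your finiteness assumption. The proposition carries no finiteness or local finiteness hypothesis: a round systolic complex is only required to have finite diameter, and an infinite (even non-locally-finite) bridged graph can perfectly well have finite diameter, so your parenthetical ``$G$ is finite because $\mathbf{X}$ has finite diameter and is locally finite'' invokes a hypothesis that is not there. Farber's theorem is stated for finite bridged graphs, and bridging this is precisely where the paper spends most of its proof: one first uses Polat's geodesic topology (\cite{Po3}, together with \cite{Po4}, which shows balls are compact convex sets in a bridged graph of finite diameter without infinite simplices) to extract from the empty intersection $\bigcap_{v}B_{k-1}(v)=\emptyset$ a \emph{finite} set $Y$ of vertices with $\bigcap_{v\in Y}B_{k-1}(v)=\emptyset$, and then Polat's theorem on finite isometric bridged subgraphs (\cite{Po}, Theorem 3.11) to embed $Y$ in a finite isometric bridged subgraph $H$ with $\mathrm{rad}(H)\ge k$ and $\mathrm{diam}(H)\le\mathrm{diam}(G)$, to which Farber's inequality applies. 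Without this compactness step your argument proves the proposition only for finite (or locally finite) complexes, which is strictly weaker than the stated result.
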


\begin{proof} Let diam$({\bf X})$ and rad$({\bf X})$ denote the diameter and the radius of a systolic complex $\bf X$, i.e., the diameter and radius of its underlying
bridged graph $G=G({\bf X})$. Recall that rad$({\bf X})$ is the smallest integer $r$
such that there exists a vertex $c$ of $\bf X$ (called a central vertex) so that the ball $B_r(c)$ of radius $r$ and centered at $c$ covers all
vertices of $\bf X$, i.e., $B_r(c)=V({\bf X}).$

Farber \cite[Theorem 4]{Fa} proved that if $G$ is a finite bridged graph, then $3\mr{rad}(G)\le 2\mr{diam}(G)+2.$ We will show first that this inequality holds for infinite
bridged graphs $G$ of finite diameter  $\mr{diam}(G)$ and containing no infinite simplices. Set $k:=\mr{rad}(G)\le \mr{diam}(G).$  By definition of $\mr{rad}(G)$ the
intersection of all balls of radius $k-1$ of $G$ is empty. Then using an argument of Polat (personal communication) presented below, we can find a finite subset of vertices $Y$
of $G$  such that the intersection of the balls $B_{k-1}(v),$ $v$ running over all vertices of $Y,$ is still empty.  By \cite[Theorem 3.11]{Po}, there exists a finite isometric bridged
subgraph $H$ of $G$ containing $Y.$ From the choice of $Y$ we conclude that the radius
of $H$ is at least $k$, while the diameter of $H$ is at most the diameter of $G.$
As a result, applying Farber's  inequality to $H,$ we obtain $3\mr{rad}(G)\le 3\mr{rad}(H)\le 2\mr{diam}(H)+2\le 2\mr{diam}(G)+2,$ whence
$3\mr{rad}(G)\le 2\mr{diam}(G)+2.$

To show the existence of a finite set $Y$ such that $\cap \{ B_{k-1}(v): v\in Y\}=\emptyset,$ we use an argument of Polat.
According to Theorem 3.9 of \cite{Po3}, any graph without isometric rays (in particular, any bridged graph of finite diameter) can be endowed with a topology,
called {\it geodesic topology}, so that the resulting topological space is compact. On the other hand, it is shown in \cite[Corollary 6.26]{Po4} that any convex set of a bridged
graph containing no infinite simplices is closed in the geodesic topology. As a result, the balls of a bridged graph $G$ of finite diameter containing no infinite simplices are compact
convex sets. Hence any family of balls with an empty intersection contains a finite subfamily with an empty intersection, showing that such a finite set $Y$ indeed
exists.

Now suppose that $\bf X$ is a round systolic complex and let $k:=\mr{diam}({\bf X}).$ Since $\bf X$ is round, one can easily deduce that $\mr{rad}({\bf X})=k$: indeed, if
$\mr{rad}({\bf X})\le k-1$ and $c$ is a central vertex, then $c$ will belong to the intersection $\cap\{ B_{k-1}(v): v\in V({\bf X})\},$ which is impossible.
Applying Farber's inequality to the (bridged) underlying graph of $\bf X$, we conclude that $3k\le 2k+2,$ whence $k\le 2.$
\end{proof}

\begin{remark} It would be interesting to extend Proposition \ref{round} and the relationship of \cite{Fa} between radii and diameters to weakly systolic complexes.
\end{remark}

\medskip
Osajda-Przytycki \cite{OsPr} constructed a $Z$--set compactification $\cx={\bf X} \cup \partial {\bf X}$
of a systolic complex $\bf X$. The main result there (\cite[Theorem
6.3]{OsPr}) together with Theorem E from the Introduction of our paper, suggest that for a group $G$ acting geometrically by simplicial automorphisms
on a systolic complex $\bf X$ the following result holds:
\medskip

\noindent
{\it The compactification $\cx={\bf X}\cup \partial {\bf X}$ of ${\bf X}$
satisfies the following properties:
\xms

1. $\cx$ is a Euclidean retract (ER);
\xms

2. $\partial {\bf X}$ is a $Z$--set in $\cx$;
\xms

 3. for every compact set $K\subseteq {\bf X}$,
$(gK)_{g\in G}$ is a null sequence;
\xms

 4. the action of $G$ on $\bf X$ extends to an action,
by homeomorphisms, of $G$ on $\cx$;
\xms

 5. for every finite subgroup $F$ of $G$, the fixed point set $\mr {Fix}_F \cx$ is contractible;
\xms

 6. for every finite subgroup $F$ of $G$, the fixed point set $\mr{Fix}_F {\bf X}$ is dense in $\mr {Fix}_F \cx$.
}
\medskip

This asserts that $\cx$ is an \emph{$EZ$--structure}, sensu  Rosenthal \cite{Ro}, for a systolic group $G$; for details, see \cite{OsPr}.
The existence of such a structure implies, by \cite{Ro}, the Novikov conjecture for $G$.

\section*{Acknowledgements}

Work of V. Chepoi was supported in part by the ANR grant BLAN06-1-138894 (projet
OPTICOMB). Work of D. Osajda was supported in part by MNiSW grant N N201 541738, and by the ANR grants Cannon and Th\'eorie G\'eom\'etrique des Groupes. We are grateful to Norbert Polat
for his help in the proof of Proposition \ref{round}.


\begin{bibdiv}
\begin{biblist}

\bib{AnFa}{article}{
   author={Anstee, Richard P.},
   author={Farber, Martin},
   title={On bridged graphs and cop-win graphs},
   journal={J. Combin. Theory Ser. B},
   volume={44},
   date={1988},
   number={1},
   pages={22--28},
   issn={0095-8956},
   review={\MR{923263 (89h:05053)}},
}

\bib{BaCh_weak}{article}{
   author={Bandelt, Hans-J{\"u}rgen},
   author={Chepoi, Victor},
   title={A Helly theorem in weakly modular space},
   journal={Discrete Math.},
   volume={160},
   date={1996},
   number={1-3},
   pages={25--39},
   issn={0012-365X},
   review={\MR{1417558 (97h:52006)}},
}

\bib{BaCh_survey}{article}{
   author={Bandelt, Hans-J{\"u}rgen},
   author={Chepoi, Victor},
   title={Metric graph theory and geometry: a survey},
   conference={
      title={Surveys on discrete and computational geometry},
   },
   book={
      series={Contemp. Math.},
      volume={453},
      publisher={Amer. Math. Soc.},
      place={Providence, RI},
   },
   date={2008},
   pages={49--86},
   review={\MR{2405677 (2009h:05068)}},
}

\bib{BrHa}{book}{
   author={Bridson, Martin R.},
   author={Haefliger, Andr{\'e}},
   title={Metric spaces of non-positive curvature},
   series={Grundlehren der Mathematischen Wissenschaften [Fundamental
   Principles of Mathematical Sciences]},
   volume={319},
   publisher={Springer-Verlag},
   place={Berlin},
   date={1999},
   pages={xxii+643},
   isbn={3-540-64324-9},
   review={\MR{1744486 (2000k:53038)}},
}

\bib{Chat}{collection}{
   title={Guido's book of conjectures},
   series={Monographies de L'Enseignement Math\'ematique [Monographs of
   L'Enseignement Math\'ematique]},
   volume={40},
   note={A gift to Guido Mislin on the occasion of his retirement from ETHZ
   June 2006;
   Collected by Indira Chatterji},
   publisher={L'Enseignement Math\'ematique},
   place={Geneva},
   date={2008},
   pages={189},
   isbn={2-940264-07-4},
   review={\MR{2499538}},
}

\bib{Ch_triangle}{article}{
   author={Chepoi, Victor},
   title={Classification of graphs by means of metric triangles},
   language={Russian},
   journal={Metody Diskret. Analiz.},
   number={49},
   date={1989},
   pages={75--93, 96},
   issn={0136-1228},
   review={\MR{1114014 (92e:05041)}},
}

\bib{Ch_bridged}{article}{
   author={Chepoi, Victor},
   title={Bridged graphs are cop-win graphs: an algorithmic proof},
   journal={J. Combin. Theory Ser. B},
   volume={69},
   date={1997},
   number={1},
   pages={97--100},
   issn={0095-8956},
   review={\MR{1426753 (97g:05150)}},
}

\bib{Ch_CAT}{article}{
   author={Chepoi, Victor},
   title={Graphs of some ${\rm CAT}(0)$ complexes},
   journal={Adv. in Appl. Math.},
   volume={24},
   date={2000},
   number={2},
   pages={125--179},
   issn={0196-8858},
   review={\MR{1748966 (2001a:57004)}},
}

\bib{CiYa}{article}{
   author={Civan, Yusuf},
   author={Yal{\c{c}}{\i}n, Erg{\"u}n},
   title={Linear colorings of simplicial complexes and collapsing},
   journal={J. Combin. Theory Ser. A},
   volume={114},
   date={2007},
   number={7},
   pages={1315--1331},
   issn={0097-3165},
   review={\MR{2353125 (2009a:05067)}},
}

\bib{Diestel}{book}{
   author={Diestel, Reinhard},
   title={Graph theory},
   series={Graduate Texts in Mathematics},
   volume={173},
   edition={4},
   publisher={Springer},
   place={Heidelberg},
   date={2010},
   pages={xviii+437},
   isbn={978-3-642-14278-9},
   review={\MR{2744811}},
}

\bib{ECHLPT}{book}{
   author={Epstein, David B. A.},
   author={Cannon, James W.},
   author={Holt, Derek F.},
   author={Levy, Silvio V. F.},
   author={Paterson, Michael S.},
   author={Thurston, William P.},
   title={Word processing in groups},
   publisher={Jones and Bartlett Publishers},
   place={Boston, MA},
   date={1992},
   pages={xii+330},
   isbn={0-86720-244-0},
   review={\MR{1161694 (93i:20036)}},
}

\bib{Fa}{article}{
   author={Farber, Martin},
   title={On diameters and radii of bridged graphs},
   journal={Discrete Math.},
   volume={73},
   date={1989},
   number={3},
   pages={249--260},
   issn={0012-365X},
   review={\MR{983023 (90d:05192)}},
}

\bib{FaJa}{article}{
   author={Farber, Martin},
   author={Jamison, Robert E.},
   title={On local convexity in graphs},
   journal={Discrete Math.},
   volume={66},
   date={1987},
   number={3},
   pages={231--247},
   issn={0012-365X},
   review={\MR{900046 (89e:05167)}},
}

\bib{G}{article}{
   author={Gromov, Mikhail},
   title={Hyperbolic groups},
   conference={
      title={Essays in group theory},
   },
   book={
      series={Math. Sci. Res. Inst. Publ.},
      volume={8},
      publisher={Springer},
      place={New York},
   },
   date={1987},
   pages={75--263},
   review={\MR{919829 (89e:20070)}},
}

\bib{Ha}{article}{
    title     ={Complexes simpliciaux hyperboliques
                de grande dimension},
    author    ={Haglund, Fr\' ed\' eric},
    status    ={preprint},
    journal   ={Prepublication Orsay},
    volume    ={71},
    date      ={2003},
    eprint    ={http://www.math.u-psud.fr/~biblio/ppo/2003/fic/ppo_2003_71.pdf}
}

\bib{HeNe}{book}{
   author={Hell, Pavol},
   author={Ne{\v{s}}et{\v{r}}il, Jaroslav},
   title={Graphs and homomorphisms},
   series={Oxford Lecture Series in Mathematics and its Applications},
   volume={28},
   publisher={Oxford University Press},
   place={Oxford},
   date={2004},
   pages={xii+244},
   isbn={0-19-852817-5},
   review={\MR{2089014 (2005k:05002)}},
}

\bib{JanSwi}{article}{
   author={Januszkiewicz, Tadeusz},
   author={{\'S}wi{\c{a}}tkowski, Jacek},
   title={Simplicial nonpositive curvature},
   journal={Publ. Math. Inst. Hautes \'Etudes Sci.},
   number={104},
   date={2006},
   pages={1--85},
   issn={0073-8301},
   review={\MR{2264834 (2007j:53044)}},
   doi={10.1007/s10240-006-0038-5},
}

\bib{Lu}{article}{
   author={L{\"u}ck, Wolfgang},
   title={Survey on classifying spaces for families of subgroups},
   conference={
      title={Infinite groups: geometric, combinatorial and dynamical
      aspects},
   },
   book={
      series={Progr. Math.},
      volume={248},
      publisher={Birkh\"auser},
      place={Basel},
   },
   date={2005},
   pages={269--322},
}

\bib{LySch}{book}{
   author={Lyndon, Roger C.},
   author={Schupp, Paul E.},
   title={Combinatorial group theory},
   series={Classics in Mathematics},
   note={Reprint of the 1977 edition},
   publisher={Springer-Verlag},
   place={Berlin},
   date={2001},
   pages={xiv+339},
   isbn={3-540-41158-5},
   review={\MR{1812024 (2001i:20064)}},
}

\bib{Ma}{article}{
   author={Matou{\v{s}}ek, Ji{\v{r}}{\'{\i}}},
   title={LC reductions yield isomorphic simplicial complexes},
   journal={Contrib. Discrete Math.},
   volume={3},
   date={2008},
   number={2},
   pages={37--39},
   issn={1715-0868},
   review={\MR{2455228 (2009g:55031)}},
}

\bib{O-ciscg}{article}{
   author={Osajda, Damian},
   title={Connectedness at infinity of systolic complexes and groups},
   journal={Groups Geom. Dyn.},
   volume={1},
   date={2007},
   number={2},
   pages={183--203},
   issn={1661-7207},
   review={\MR{2319456 (2008e:20064)}},
}

\bib{Osajda}{article}{
    title     ={A combinatorial non-positive
                curvature I: weak systolicity},
    author    ={Osajda, Damian},
    status    ={preprint},
    eprint    ={http://www.math.uni.wroc.pl/~dosaj/trav/cnpc100503.pdf},
    date      ={2010},
}

\bib{O2}{article}{
    title     ={A construction of hyperbolic Coxeter groups},
    author    ={Osajda, Damian},
    journal={Comment. Math. Helv.},
    status    ={to appear},
    eprint    ={arXiv:1003.0774v1 [math.GR]},
    date={2012},
}

\bib{O3}{article}{
    title     ={A classifying space for virtually cyclic subgroups of a systolic group},
    author    ={Osajda, Damian},
    status    ={in preparation},
    date={2011},
}

\bib{OsPr}{article}{
   author={Osajda, Damian},
   author={Przytycki, Piotr},
   title={Boundaries of systolic groups},
   journal={Geom. Topol.},
   volume={13},
   date={2009},
   number={5},
   pages={2807--2880},
}

\bib{OS}{article}{
    title     ={On asymptotically hereditarily aspherical groups},
    author    ={Osajda, Damian},
    author    ={\'Swi{\c a}tkowski, Jacek},
    status    ={in preparation},
    date={2009}
}

\bib{Po3}{article}{
   author={Polat, Norbert},
   title={Graphs without isometric rays and invariant subgraph properties.
   I},
   journal={J. Graph Theory},
   volume={27},
   date={1998},
   number={2},
   pages={99--109},
   issn={0364-9024},
   review={\MR{1491562 (99a:05086)}},
}

\bib{Po1}{article}{
   author={Polat, Norbert},
   title={On infinite bridged graphs and strongly dismantlable graphs},
   journal={Discrete Math.},
   volume={211},
   date={2000},
   number={1-3},
   pages={153--166},
   issn={0012-365X},
   review={\MR{1735348 (2000k:05232)}},
}

\bib{Po}{article}{
   author={Polat, Norbert},
   title={On isometric subgraphs of infinite bridged graphs and geodesic
   convexity},
   note={Algebraic and topological methods in graph theory (Lake Bled,
   1999)},
   journal={Discrete Math.},
   volume={244},
   date={2002},
   number={1-3},
   pages={399--416},
   issn={0012-365X},
   review={\MR{1844048 (2003c:05070)}},
}

\bib{Po4}{article}{
   author={Polat, Norbert},
   title={Fixed finite subgraph theorems in infinite weakly modular graphs},
   journal={Discrete Math.},
   volume={285},
   date={2004},
   number={1-3},
   pages={239--256},
   issn={0012-365X},
   review={\MR{2062847 (2005c:05174)}},
}


\bib{Pr2}{article}{
   author={Przytycki, Piotr},
   title={The fixed point theorem for simplicial nonpositive curvature},
   journal={Math. Proc. Cambridge Philos. Soc.},
   volume={144},
   date={2008},
   number={3},
   pages={683--695}
}

\bib{Pr3}{article}{
   author={Przytycki, Piotr},
   title={$\underline EG$ for systolic groups},
   journal={Comment. Math. Helv.},
   volume={84},
   date={2009},
   number={1},
   pages={159--169}
   }

\bib{PS}{article}{
   author={Przytycki, Piotr},
   author={Schultens, Jennifer},
   title={Contractibility of the Kakimizu complex and symmetric Seifert surfaces},
   date={2010},
   number={3},
   volume={364}
   pages={1489--1508},
   journal={Transactions of the AMS}}

\bib{Rol}{article}{
    title     ={Poc sets,
            median algebras and group actions. An extended study of
            Dunwoody's construction and Sageev's theorem},
    author    ={Roller, Martin A.},
    status    ={preprint},
    journal   ={Univ. of
            Southampton Preprint Ser.},
    date      ={1998},

}

\bib{RoTaLu}{article}{
   author={Rose, Donald J.},
   author={Tarjan, R. Endre},
   author={Lueker, George S.},
   title={Algorithmic aspects of vertex elimination on graphs},
   journal={SIAM J. Comput.},
   volume={5},
   date={1976},
   number={2},
   pages={266--283},
   issn={0097-5397},
   review={\MR{0408312 (53 \#12077)}},
}

\bib{Ro}{article}{
    title     ={Split injectivity of the Baum-Connes assembly map},
    author    ={Rosenthal, David},
    status    ={preprint},
    date      ={2003},
    eprint    ={arXiv: math/0312047}
}

\bib{SoCh}{article}{
   author={Soltan, Valeriu P.},
   author={Chepoi, Victor},
   title={Conditions for invariance of set diameters under
   $d$-convexification in a graph},
   language={Russian, with English summary},
   journal={Kibernetika (Kiev)},
   date={1983},
   number={6},
   pages={14--18},
   issn={0023-1274},
   translation={
      journal={Cybernetics},
      volume={19},
      date={1983},
      number={6},
      pages={750--756 (1984)},
      issn={0011-4235},
   },
   review={\MR{765117 (86k:05102)}},
}

\bib{Wee}{article}{
   author={Weetman, Graham M.},
   title={A construction of locally homogeneous graphs},
   journal={J. London Math. Soc. (2)},
   volume={50},
   date={1994},
   number={1},
   pages={68--86},
   issn={0024-6107},
   review={\MR{1277755 (95d:05113)}},
}

\bib{Wi}{article}{
    title     ={Sixtolic complexes and their fundamental groups},
    author    ={Wise, Daniel T.},
    status    ={unpublished manuscript},
    date={2003}
}

\bib{Z}{thesis}{
    title     ={O pewnych w\l asno\' sciach $6$--systolicznych kompleks\' ow symplicjalnych},
    language={Polish},
    author    ={Zawi\' slak, Pawe\l},
    organization={Wroc{\l}aw University},
    date      ={2004},
    type      ={M.Sc. thesis}
}

\end{biblist}
\end{bibdiv}

\end{document}